\documentclass[11pt,reqno,bold]{paper}
\usepackage{geometry}
\geometry{letterpaper}
\usepackage{amsmath}
\usepackage{amssymb}
\usepackage{amsfonts}
\usepackage{mathrsfs}
\usepackage{dsfont}
\usepackage{enumerate}
\usepackage{comment}
\usepackage{amsthm}
\usepackage{color}
\definecolor{darkblue}{rgb}{0.0,0.0,0.4}
\usepackage[pdfauthor={Al Balushi, Tsogtgerel},pdftitle={Adaptive finite element method for the biharmonic problem},pdfsubject={adaptive methods, convergence rate, biharmonic problem},colorlinks,citecolor=darkblue,linkcolor=darkblue,urlcolor=darkblue]{hyperref}

\newtheorem{theorem}{Theorem}[section]

\newtheorem{lemma}[theorem]{Lemma}
\newtheorem{corollary}[theorem]{Corollary}
\theoremstyle{definition}
\theoremstyle{remark}
\newtheorem{remark}[theorem]{Remark}

\usepackage{algorithmicx}
\usepackage[ruled]{algorithm}
\usepackage{algpseudocode}

\excludecomment{comment:quasi}

\newcommand{\ms}[1]{{\mathbf{#1}}}

\newcommand{\ds}[1]{{\mathds{#1}}}

\renewcommand{\rm}[1]{{\mathrm{#1}}}
\newcommand{\bb}[1]{{\mathbb{#1}}}
\renewcommand{\cal}[1]{{\mathcal{#1}}}
\newcommand{\scr}[1]{{\mathscr{#1}}}
\newcommand{\n}{\ms{n}}
\newcommand{\diff}{\partial}
\newcommand{\grad}{\nabla}
\newcommand{\Lap}{\Delta}

\renewcommand{\dfrac}[2]{\frac{\diff #1}{\diff #2}}
\newcommand{\semi}[2]{{|#1|}_{#2}}
\newcommand{\norm}[2]{\|#1\|_{#2}}
\newcommand{\inner}[1]{\left\langle#1\right\rangle}
\newcommand{\enorm}[2]{|\!|\!|#1|\!|\!|_{#2}}
\newcommand{\jump}[2]{\bb{J}_{#2}\!\left(#1\right)}

\newcommand{\supp}{\mathrm{supp}\,}
\newcommand{\diam}{\rm{diam}\,}
\newcommand{\lapprox}{\preceq}

\newcommand{\eps}{\varepsilon}
\renewcommand{\u}{u}
\renewcommand{\v}{v}

\newcommand{\e}{e}
\newcommand{\U}{U}
\newcommand{\V}{V}
\newcommand{\W}{W}
\newcommand{\X}{\bb{X}}
\renewcommand{\a}{a}

\newcommand{\marked}{\scr{M}}
\newcommand{\face}{\tau}
\newcommand{\cell}{\tau}
\newcommand{\edge}{\sigma}

\newcommand{\eff}{f}

\newcommand{\up}{\U}
\newcommand{\upp}{\U_\ast}
\newcommand{\ep}{e_\mesh}
\newcommand{\epp}{e_{\mesh_\ast}}

\newcommand{\Ip}{I_\mesh^0}

\newcommand{\Ho}{H_0^2}
\renewcommand{\H}{H^2}

\newcommand{\mesh}{P}
\newcommand{\bedges}{\cal{G}}
\newcommand{\edges}{\cal{E}}
\newcommand{\Xp}{\bb{X}_\mesh}
\newcommand{\Xpp}{\bb{X}_{\mesh_\ast}}
\newcommand{\ap}{a_\mesh}
\newcommand{\app}{a_{\mesh_\ast}}
\newcommand{\Ep}{\scr{E}_\mesh}

\newcommand{\Op}{\cal{L}}
\newcommand{\Pip}{\Pi_\mesh}
\newcommand{\Pipp}{\Pi_{\mesh_\ast}}
\newcommand{\ellf}{\ell_f}
\newcommand{\osc}{\rm{osc}}
\newcommand{\etap}{\eta_\mesh}
\newcommand{\etapp}{\eta_{\mesh_\ast}}
\newcommand{\Ccoer}{C_\rm{coer}}
\newcommand{\Ccont}{C_\rm{cont}}

\newcommand{\cshape}{c_\rm{shape}}
\newcommand{\Crel}{C_\rm{Rel}}
\newcommand{\Cdrel}{C_\rm{dRel}}
\newcommand{\Ceff}{C_\rm{eff}}
\newcommand{\Cest}{C_\rm{est}}

\newcommand{\Clip}{C_\rm{lip}}
\newcommand{\Ccomp}{C_\rm{comp}}


\newcommand{\ctrace}{d_0}
\newcommand{\cinv}{d_1}
\newcommand{\cdtrace}{d_2}
\renewcommand{\c}{d_3}

\newcommand{\Cperp}{C_{\perp}}
\newcommand{\cproj}{c_1}

\title{A quasi-optimal adaptive spline-based finite element method for the bi-Laplace operator using Nitsche's method}
\author{Ibrahim Al Balushi}
\institution{McGill University}
\date{\today}

\begin{document}
\maketitle
\begin{abstract}
We establish the convergence of an adaptive spline-based finite element method of a fourth order elliptic problem with weakly-imposed Dirichlet boundary conditions using polynomial B-splines.
\end{abstract}
\section{Introduction}
Standard finite element methods (FEM) are based on triangular mesh partitions which have proven to be very robust at discretizing domains with complex geometry and are well-suited to problems requiring $H^1$ conforming shape functions. Higher degrees of smoothness across the element inferfaces is however much more involved. In recent years, with the emergence of \emph{isogeometric analysis} (IGA); see Hughes et al \cite{hughes2005isogeometric}, much attention has been directed at polynomial spline-based methods. Motivation began with the desire to integrate the CAD and analysis stages of design. As an immediate bonus, ploynomial spline-based meshes makes it easy to construct arbitrarily high orders of smoothness due to the mesh recutangular structure. In addition, NURB curves are robust at capturing curved geometries without the accumilation interpolation errors arising from standard trinagular-based FEM meshes.
However there is a drawback of using smooth spline-based bases for there is difficulty in prescribing essential boundary conditions (BC). Unlike nodal-based finite elements, smooth polynomial splines arrising from B-splines or NURBS are typically non-interpolatory which makes prescriptions of Dirichlet boundary conditions challenging and lead to highly oscillatory errors near the boundary \cite{bazilevs2007weak}.
In an earlier paper by Nitsche \cite{nitsche1971va} a weaker prescription of the boundary conditions is carried where BC are incorprated in the variational form rather than imposing it directly onto the discrete space \cite{stenberg1995some}. This idea hass been recently applied to the bi-Laplace operator \cite{embar2010imposing} using spline-based bases. An initial a posteriori analysis with this framework has been carried in \cite{juntunen2009nitsche} where the reliability and efficiency estimates are derived for the Poisson problem. However, the estimates included weighted boundary terms with negative powers and relied on a \emph{saturation assumption}. Recently, the idea has been employed in the treatment of a fourth-order elliptic problem appearing in geophysical flows
\cite{kim2015b},\cite{al2018adaptivity} with the added improvement that terms with negative powers were shown to be irrelevant much like in the case of adaptive discontinuous Galerkin methods (ADFEM)\cite{bonito2010quasi}.
While the analyses of \cite{morin2000data},\cite{morin2002convergence} justifies the use of the saturation assumption using a local lower bound in the Poisson problem, no such estimate is yet available for its fourth-order counterpart. In this work we aim to remove the saturation assumption as well as provide a convergence proof standard in residual-based AFEM literature of \cite{cascon2008quasi}. Many of the ideas are borrowed from the treament of ADFEM methods in \cite{bonito2010quasi} highlighting the similarity in nature of both mehods, theoreticaly as well as numerically.

Let $\Omega$ be a bounded domain in $\bb{R}^2$ with polygonal boundary $\Gamma$.
For a source function $f\in L^2(\Omega)$ we consider the following homogenous Dirichlet boundary-valued problem
\begin{eqnarray}\label{eq:pde}
\Op\u(x):=\Lap^2\u(x)=f(x)&&\text{in}\ \Omega\\
u=\diff u/\diff\nu=0&& \text{on}\ \Gamma.\nonumber
\end{eqnarray}
The adaptive procedure iterates over the following modules
\begin{equation}\label{eq:afem}
\boxed{\ms{SOLVE}}\longrightarrow\boxed{\ms{ESTIMATE}}\longrightarrow\boxed{\ms{MARK}}\longrightarrow\boxed{\ms{REFINE}}
\end{equation}
The module $\textbf{SOLVE}$ computes a hierarchical polynomial B-spline (HB) approximation $\U$ of the solution $\u$ with respect to a hierarchical partition $\mesh$ of $\Omega$.
For the module \textbf{ESTIMATE}, we use a residual-based error estimator $\eta_\mesh$ derived from the a posteriori analysis in Section~\ref{sec:post}.
The module \textbf{MARK} follows the D\"olfer marking criterion of \cite{dorfler1996convergent}.
Finally, the module \textbf{REFINE} produces a new refined partition $\mesh_\ast$ satisfying certain geometric constraints to ensure sharp approximation.\\
\\
\subsection{Notation}
We begin by laying out the notational conventions and function space definitions used in this presentation.
Let $\mesh$ be a partition of domain $\Omega$ consisting of square cells $\cell$ following the structure described in \cite{}.
Denote the collection of all interior edges of cells $\cell\in\mesh$ by $\edges_\mesh$ and all those along the boundary $\Gamma$ are to be collected in $\bedges_\mesh$.
We assume that cells $\tau$ are open sets in $\Omega$ and that edges $\sigma$ do not contain the vertices of its affiliating cell.
Let $\diam(\omega)$ be the longest length within a Euclidian object $\omega$ and set $h_\face:=\diam(\face)$ and $h_\edge:=\diam(\edge)$. Then let the mesh-size $h_\mesh:=\max_{\face\in\mesh}h_\face$.
Define the boundary mesh-size function $h_\Gamma\in L^\infty(\Gamma)$ by
\begin{equation}
h_\Gamma(x)=\sum_{\sigma\in\bedges_\mesh}h_\sigma\ds{1}_\sigma(x),
\end{equation}
where the $\ds{1}_\sigma$ are the indicator functions on boundary edges.
We define the support extension for a cell $\tau \in \mesh$ by
\begin{equation}
\omega_\tau=\{\tau'\in\mesh:\supp\beta\cap \tau'\neq\emptyset\implies\supp\beta\cap \tau\neq\emptyset\},
\end{equation}
indicating the collection of all supports for basis function $\beta$'s whose supports intersect $\tau$.
Analogously, we denote the support extension for an edge $\sigma\in\edges_\mesh\cup\bedges_\mesh$ by
\begin{equation}
\omega_\sigma=\{\tau\in\mesh:\supp\beta\cap\tau\neq\emptyset\implies\supp\beta\cap \tau\neq\emptyset,\ \sigma\subset\diff\tau\}.
\end{equation}
Let $H^s(\Omega)$, $s>0$, be the fractional order Sobolev space equipped with the usual norm $\norm{\cdot}{H^s(\Omega)}$; see references \cite{adams1975sobolev},\cite{grisvard2011elliptic}.
Let $H^s_0(\Omega)$ be given as the closure of the test functions $C_c^\infty(\Omega)$ in $\norm{\cdot}{H^s(\Omega)}$.
The semi-norm $\semi{\cdot}{H^s(\Omega)}$ defines a full norm on $H^s_0(\Omega)$ by virtue of Poincar\'e's inequality. Moreover, the semi-norm $\norm{\Lap\cdot}{L^2(\Omega)}$ defines a norm on $H^2_0(\Omega)$.
Let
\begin{equation}
\bb{E}(\Omega)=\left\{\v\in H_0^2(\Omega):\cal{L}\v\in L^2(\Omega)\right\}.
\end{equation}
By $H^{-2}(\Omega)=(H^{2}(\Omega))'$ the dual of $H^{2}(\Omega)$ with the induced norm
\begin{equation}
\norm{F}{H^{-2}(\Omega)}=\sup_{\v\in H^2(\Omega)}\frac{\inner{F,\v}}{\norm{\v}{H^2(\Omega)}}.
\end{equation}
We will be making use of the following mesh-dependent (semi)norms on $H^2(\Omega)$ which we employ in Nitsche's discretization:
\begin{equation}
\norm{\v}{s,\mesh}^2=\sum_{\edge\in\bedges_\mesh}h_\sigma^{-2s}\norm{\v}{L^2(\edge)}^2,
\end{equation}
\begin{equation}\label{eq:meshnorm}
\enorm{\v}{\mesh}^2=\norm{\Lap\v}{L^2(\Omega)}^2+\gamma_1\norm{\v}{3/2,\mesh}^2+\gamma_2\left\norm{\textstyle\dfrac{\v}{\nu}\right}{1/2,\mesh}^2,
\end{equation}
with $\gamma_1$ and $\gamma_2$ are suitably large positive stabilization parameters.
Finally, we denote  $a\lapprox b$ to indicate $a\leq Cb$ for a constant $C>0$ assumed to be independent of any notable parameters unless otherwise stated.

\subsection{Problem setup}
The natural weak formulation to the PDE \eqref{eq:pde} reads
\begin{equation}\label{eq:cwp}
\text{Find}\ \u\in\Ho(\Omega)\ \text{such that}\ \a(\u,\v)=\ellf(\v)\ \text{for all}\ \v\in\Ho(\Omega),
\end{equation}
where $\a:\Ho(\Omega)\times\Ho(\Omega)\to\bb{R}$ is be the bilinear form $\a(\u,\v)=(\Lap\u,\Lap\v)_{L^2(\Omega)}$ and $\ellf(\v)=(f,\v)_{L^2(\Omega)}$.
The energy norm $\enorm{\cdot}{}:=\sqrt{\a(\cdot,\cdot)}\equiv\norm{\Lap\cdot}{L^2(\Omega)}$ is one for which the form $\a$ is continuous and coercive on $H^2_0(\Omega)$, with unit proportionality constants, and the existence of a unique solution is therefore ensured by Babuska-Lax-Milgram theorem.
The variational formulation \eqref{eq:cwp} is consistent with the PDE \eqref{eq:pde} under sufficient regularity considerations; if $\u\in \bb{E}(\Omega)$ satisfies \eqref{eq:cwp} then $\u$ satisfies \eqref{eq:pde} in the classical sense by virtue of the Du Bois-Reymond lemma.
The space of piecewise polynomials of degree $r\ge2$ defined on a partition $\mesh$ will be given by
\begin{equation}
\cal{P}^r_\mesh(\Omega)=\prod_{\tau\in\mesh}\bb{P}_r(\tau).
\end{equation}
Assuming we have at our disposal a polynomial B-spline space $\Xp\subset\cal{P}_\mesh^r(\Omega)\cap H^2_0(\Omega)$ then an immediate discrete problem reads
\begin{equation}\label{eq:cdp}
\text{Find}\ \U\in\Xp\ \text{such that}\ \a(\U,\V)=\ellf(\V)\ \text{for all}\ \V\in\Xp.
\end{equation}
The corresponding linear system is numerically stable and consistent with \eqref{eq:cwp} in the sense that $\a(\u,\V)=\ellf(\V)$ for every $V\in\Xp$ and therefore we are provided with Galerkin orthogonality:
\begin{equation}\label{eq:cgo}
\a(\u-\U,\V)=0\quad\forall\V\in\Xp.
\end{equation}
Moreover, the spline solution to \eqref{eq:cdp} will serve as an optimal approximation to $\u$ in $\Xp$ with respect to $\enorm{\cdot}{}$:
\begin{equation}
\label{eq:result:lem:ccl}
\enorm{\u-\U}{}\leq\inf_{\V\in\Xp}\enorm{\u-\V}{}.
\end{equation}
The discretization given in \eqref{eq:cdp} requires prescription of the essential boundary values into the discrete spline space $\Xp$, and as mentioned earlier, this poses difficulty when considering non-homogenous boundary conditions due to the non-iterpolatory nature of high-order smoothness B-splines.
Therefore from now on we will depart from a boundary-value conforming discretization and assume that the spline space $\Xp\subset\cal{P}_\mesh^r(\Omega)\cap H^2(\Omega)$ no longer satisfies the boundary conditions and instead impose them weakly.
In the previous work \cite{al2018adaptivity} the following mesh-dependent bilinear form $\ap:\Xp\times\Xp\to\bb{R}$ is used to formulate Nitsche's discretization:
\begin{equation}\label{eq:ndp}
\text{Find}\ \U\in\Xp\ \text{such that}\ \ap(\U,\V)=\ellf(\V)\ \text{for all}\ \V\in\Xp.
\end{equation}
where
\begin{equation}\label{eq:oldnitbilin}
\begin{split}
\ap(\U,\V)&=\a(\U,\V)-\int_\Gamma\left(\textstyle\Lap\U\dfrac{\V}{\nu}+\Lap\V\dfrac{\U}{\nu}\right)
+\gamma_1\int_\Gamma h_\Gamma^{-3}\U\V
\\
&+\int_\Gamma\left(\textstyle\dfrac{\Lap\U}{\nu}\V
+\dfrac{\Lap\V}{\nu}\U\right)
+\gamma_2\int_\Gamma h_\Gamma^{-1}\textstyle\dfrac{\U}{\nu}\dfrac{\V}{\nu}.
\end{split}
\end{equation}
The discrete problem of \eqref{eq:ndp} with bilinear form \eqref{eq:oldnitbilin} is consistent with its continuous counterpart \eqref{eq:cwp} and quasi-optimal a priori error estimates have been realized; see \cite{al2018adaptivity}.
Unfortuantely, much like the analysis carried in \cite{juntunen2009nitsche},\cite{al2018adaptivity}, all \emph{a posteriori} estimates relied on the artificial so-called saturation assumption.
Here we will consider a modified version of the bilinear form \eqref{eq:oldnitbilin} which extends the domain of $\ap$ to all of $H^2(\Omega)$. This will enable us to remove the saturation assumption while carrying complete convergence analysis, and in an upcoming publication, an optimality analysis. Moreover, for discrete arguments the new bilinear form reduces back to \eqref{eq:oldnitbilin} . This will however be at the expense of consistency where we will no longer have access to \eqref{eq:cgo}. It will be shown that this obstacle is manageable and all desired conclusions will be met at the price of more delicate treatment.\\
\\
Let $\Pip:L^2(\Omega)\to\cal{P}^{r-2}_\mesh(\Omega)$ be the $L^2$-orthogonal projection operator given by
\begin{equation}
\forall\v\in L^2(\Omega),\ \Pip\v\in\cal{P}^{r-2}_\mesh(\Omega)
\ \text{such that}\
\int_\Omega\Pi_\mesh\v q=\int_\Omega \v q\quad\forall q\in\cal{P}_\mesh^{r-2}(\Omega).
\end{equation}
Instead of \eqref{eq:oldnitbilin} we consider the bilinear form $\ap:H^2(\Omega)\times H^2(\Omega)\to\mathbb{R}$
\begin{equation}\label{eq:nbf}
\begin{split}
\ap(\u,\v)&=\a(\u,\v)
-\int_\Gamma\left(\textstyle\Pip(\Lap\u)\dfrac{\v}{\nu}+\Pip(\Lap\v)\dfrac{\u}{\nu}\right)
+\gamma_1\int_\Gamma h_\Gamma^{-3}\u\v
\\
&+\int_\Gamma\left(\textstyle\dfrac{\Pip(\Lap\u)}{\nu}\v
+\dfrac{\Pip(\Lap\v)}{\nu}\u\right)
+\gamma_2\int_\Gamma h_\Gamma^{-1}\textstyle\dfrac{\u}{\nu}\dfrac{\v}{\nu}.
\end{split}
\end{equation}
The problem we will consider will read as \eqref{eq:ndp} but now with $\ap$ defined by \eqref{eq:nbf}.
To simplify notation we define
\begin{equation}\nonumber
\lambda_\mesh(u,v):=\int_\Gamma\left({\textstyle\dfrac{\Pip(\Lap\u)}{\nu}\v-\Pip(\Lap\u)\dfrac{\v}{\nu}}\right),
\quad
\lambda_\mesh^\ast(u,v):=\int_\Gamma\left(\textstyle\u\dfrac{\Pip(\Lap\v)}{\nu}-\dfrac{\u}{\nu}\Pip(\Lap\v)\right),
\end{equation}
\begin{equation}
\Sigma_\mesh(\u,\v):=\gamma_1\int_\Gamma h_\Gamma^{-3}\u\v+\gamma_2\int_\Gamma h_\Gamma^{-1}\textstyle\dfrac{\u}{\nu}\dfrac{\v}{\nu}.
\end{equation}
%
The solution $\u$ to \eqref{eq:cwp} does not satisfy the modified problem \eqref{eq:ndp}.
To quantify the inconsistency for $\u\in\bb{E}(\Omega)$, let $\Ep(\u)\in H^{-2}(\Omega)$ be given by
\begin{equation}\label{eq:it}
\inner{\Ep(\u),\v}=\int_\Gamma\left(\textstyle\dfrac{\Pip(\Lap\u)}{\nu}-\dfrac{\Lap\u}{\nu}\right)\v-
\int_\Gamma\left(\textstyle\Pip(\Lap\u)-\Lap\u\right)\textstyle\dfrac{\v}{\nu},
\quad\v\in H^2(\Omega).
\end{equation}
\begin{lemma}[Inconsistency]\label{lem:ac}
If $\u\in\bb{E}(\Omega)$ is the solution to \eqref{eq:cwp} then
\begin{equation}\label{eq:res:lem:ac}
\ap(\u,\v)=\ellf(\v)+\inner{\Ep(\u),\v}\quad\forall\v\in H^2(\Omega).
\end{equation}
\end{lemma}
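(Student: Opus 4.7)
The plan is to use the regularity $u\in\bb{E}(\Omega)\subset\Ho(\Omega)$ to eliminate a majority of the boundary terms in \eqref{eq:nbf}, apply Green's formula to relate $a(u,v)$ to $\ellf(v)$, and then observe that the residual terms coming from the projection $\Pip$ precisely reassemble into $\inner{\Ep(u),v}$.

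More concretely, the first step is to exploit that $u\in\Ho(\Omega)$, so that $u=0$ and $\diff u/\diff\nu=0$ on $\Gamma$ (in the sense of traces). Substituting this into \eqref{eq:nbf} collapses four of the five boundary contributions: the penalty terms $\gamma_1\int_\Gamma h_\Gamma^{-3}uv$ and $\gamma_2\int_\Gamma h_\Gamma^{-1}\tfrac{\diff u}{\diff\nu}\tfrac{\diff v}{\diff\nu}$ vanish, as do $\int_\Gamma \Pip(\Lap v)\tfrac{\diff u}{\diff \nu}$ and $\int_\Gamma \tfrac{\diff \Pip(\Lap v)}{\diff\nu}u$. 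What remains is
\begin{equation}\nonumber
\ap(u,v)=a(u,v)-\int_\Gamma \Pip(\Lap u)\textstyle\dfrac{v}{\nu}+\int_\Gamma\dfrac{\Pip(\Lap u)}{\nu}v.
\end{equation}

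Next, since $u\in\bb{E}(\Omega)$ we have $\Op u=\Bihar u\in L^2(\Omega)$, which justifies two applications of Green's identity to rewrite
\begin{equation}\nonumber
a(u,v)=(\Lap u,\Lap v)_{L^2(\Omega)}=(\Bihar u,v)_{L^2(\Omega)}+\int_\Gamma\Lap u\textstyle\dfrac{v}{\nu}-\int_\Gamma\dfrac{\Lap u}{\nu}v
\end{equation}
for every $v\in H^2(\Omega)$; since $u$ solves \eqref{eq:cwp} and $\bb{E}(\Omega)$ consists of functions whose biharmonic lies in $L^2(\Omega)$, the Du Bois–Reymond argument mentioned earlier gives $\Bihar u=f$ a.e., so $(\Bihar u,v)_{L^2(\Omega)}=\ellf(v)$. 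Combining with the previous display yields
\begin{equation}\nonumber
\ap(u,v)=\ellf(v)+\int_\Gamma\bigl(\textstyle\Lap u-\Pip(\Lap u)\bigr)\dfrac{v}{\nu}-\int_\Gamma\bigl(\textstyle\dfrac{\Lap u}{\nu}-\dfrac{\Pip(\Lap u)}{\nu}\bigr)v,
\end{equation}
which matches $\inner{\Ep(u),v}$ as defined in \eqref{eq:it} up to the sign convention.

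I do not anticipate a serious obstacle: everything reduces to bookkeeping. The only subtlety worth flagging is the meaning of the traces $\Lap u|_\Gamma$ and $\tfrac{\diff\Lap u}{\diff\nu}|_\Gamma$: for a generic $u\in H^2(\Omega)$ these are not well defined, but under the hypothesis $u\in\bb{E}(\Omega)$ one can interpret the boundary integrals via a density argument or by integration by parts against the polynomial test function $\Pip(\Lap u)|_\tau$ cell by cell, so that the identity holds in the distributional/duality sense required by \eqref{eq:it}. Once this is noted, the conclusion \eqref{eq:res:lem:ac} is immediate.
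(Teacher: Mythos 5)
Your argument is correct and essentially replicates the paper's own computation: both integrate by parts on $a(u,v)$, use $u\in H^2_0(\Omega)$ to kill the penalty terms and the boundary terms weighted by $u$ or $\diff u/\diff\nu$, invoke $\Bihar u=f$, and collect the surviving projection-residual boundary integrals into $\inner{\Ep(u),v}$. (Your write-up is in fact cleaner: it explicitly accounts for the stabilization terms and flags the trace subtlety for $\Lap u$ and $\diff\Lap u/\diff\nu$, both of which the paper's proof glosses over; also note the signs match \eqref{eq:it} exactly, so the "up to sign convention" hedge is unnecessary.)
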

\begin{proof}
Integrate by parts to get
\begin{equation}
\begin{split}
\ap(\u,\v)-\ellf(\v)&=\int_\Omega\left(\Op\u-f\right)\v
+\int_\Gamma{\Lap\u\textstyle\dfrac{\v}{\nu}}-\int_\Gamma{\textstyle\dfrac{\Lap\u}{\nu}}\v\\
&+\int_\Gamma{\textstyle\dfrac{\Pip(\Lap\v)}{\nu}}\u-\int_\Gamma\Pip(\Lap\v){\textstyle\dfrac{\u}{\nu}}
-\int_\Gamma{\textstyle\Pip(\Lap\u)\dfrac{\v}{\nu}}+\int_\Gamma\textstyle\dfrac{\Pip(\Lap\u)}{\nu},
\end{split}
\end{equation}
and
\begin{equation}
\int_\Omega\left(\Op\u-f\right)\v=\int_\Gamma{\textstyle\dfrac{\Pip(\Lap\v)}{\nu}}\u=\int_\Gamma\Pip(\Lap\v){\textstyle\dfrac{\u}{\nu}}=0\quad\forall\v\in H^2_0(\Omega),
\end{equation}
since $\u$ satisies the boundary valued differential equation \eqref{eq:pde}.
\end{proof}
\begin{remark}
It will be assumed from now on that the argument $\u\in\bb{E}(\Omega)$ in \eqref{eq:res:lem:ac} will aways be the continuous solution to \eqref{eq:cwp} and therefore we will drop the $(\u)$ from $\Ep(\u)$.
\end{remark}
\begin{remark}
Noting that $\Ho(\Omega)$ is in the kernel of $\Ep$, we see from \eqref{eq:res:lem:ac} that $\ap$ reduces to $\a$ and the discrete formulation \eqref{eq:ndp} is in fact consistent with \eqref{eq:cwp} whenever test functions $\v$ satisfy the boundary conditions.
\end{remark}
\begin{lemma}\label{lem:L2estimates}
Let $\mesh$ be an admissible partition, let $\tau\in\mesh$ and let $\sigma\in\bedges_\mesh$ with $\sigma\subset\diff\tau$.
The projection operator $\Pip$ satisfies the following stability estimates:
\begin{equation}\label{eq:stability:lem:L2estimates}
\norm{\Pip\v}{L^2(\Omega)}\leq\norm{\v}{L^2(\Omega)},
\end{equation}
and
\begin{equation}\label{eq:inverse:lem:L2estimates}
\left\norm{\Pip\v\right}{L^2(\sigma)}\leq\c h_\sigma^{-1/2}\norm{\v}{L^2(\tau)}
\quad\text{and}\quad
\left\norm{\textstyle\dfrac{(\Pip\v)}{\n_\sigma}\right}{L^2(\sigma)}\leq\c h_\sigma^{-3/2}\norm{\v}{L^2(\tau)},
\end{equation}
holding for every $\v\in L^2(\Omega)$.
\end{lemma}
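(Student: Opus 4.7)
The two claims decouple cleanly: \eqref{eq:stability:lem:L2estimates} is pure Hilbert-space projection theory, while \eqref{eq:inverse:lem:L2estimates} combines a standard scaled trace/inverse estimate on polynomials with the cellwise locality of $\Pip$.

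\textbf{Step 1 (Stability).} Since the target space $\cal{P}^{r-2}_\mesh(\Omega)=\prod_{\tau\in\mesh}\bb{P}_{r-2}(\tau)$ imposes no interelement continuity, the $L^2$-orthogonal projection $\Pip$ acts cell by cell: $(\Pip\v)|_\face$ coincides with the $L^2$-orthogonal projection of $\v|_\face$ onto $\bb{P}_{r-2}(\face)$. As an orthogonal projection in a Hilbert space, its operator norm is bounded by one, so $\norm{\Pip\v}{L^2(\face)}\le\norm{\v}{L^2(\face)}$ cellwise; summing gives \eqref{eq:stability:lem:L2estimates}. Note that this cellwise version will also be the key ingredient for Step~3.

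\textbf{Step 2 (Scaled trace and inverse on polynomials).} Pull back to a reference cell $\hat\face$ of unit side length via the affine map $\hat x\mapsto h_\face\hat x$. On the finite-dimensional space $\bb{P}_{r-2}(\hat\face)$ all norms are equivalent, which delivers the reference-cell trace inequality $\norm{\hat q}{L^2(\hat\edge)}\lapprox\norm{\hat q}{L^2(\hat\face)}$ and the Markov-type inverse inequality $\norm{\hat\grad\hat q}{L^2(\hat\face)}\lapprox\norm{\hat q}{L^2(\hat\face)}$. Tracking the Jacobian factors under the scaling (edge measure $\sim h_\edge$, cell measure $\sim h_\edge^2$, gradient $\sim h_\edge^{-1}$, with $h_\face\sim h_\edge$ by the square-cell structure) pushes these forward to the physical estimates
\[
\norm{q}{L^2(\edge)}\lapprox h_\edge^{-1/2}\norm{q}{L^2(\face)},\qquad
\norm{\textstyle\dfrac{q}{\nu}}{L^2(\edge)}\lapprox h_\edge^{-3/2}\norm{q}{L^2(\face)},
\]
valid for every $q\in\bb{P}_{r-2}(\face)$. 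The second estimate follows by applying the first to the polynomial $\diff q/\diff\nu\in\bb{P}_{r-3}(\face)$ and then invoking the scaled inverse inequality once.

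\textbf{Step 3 (Combine).} Apply Step~2 with $q=(\Pip\v)|_\face$, which is a polynomial in $\bb{P}_{r-2}(\face)$, and absorb the cellwise stability $\norm{\Pip\v}{L^2(\face)}\le\norm{\v}{L^2(\face)}$ supplied by Step~1. This yields \eqref{eq:inverse:lem:L2estimates} with $\c$ equal to the product of the reference-cell trace and inverse constants.

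\textbf{Main obstacle.} There is no substantive obstacle here; the only care needed is the bookkeeping of scaling exponents, and the identification $h_\face\sim h_\edge$ which is automatic for the square-cell partitions of this paper. The resulting $\c$ depends only on $r$ and on the shape of the reference cell, not on $\v$ or $h_\edge$.
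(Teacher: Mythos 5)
Your proof is correct and relies on the same three ingredients as the paper: the $L^2$-projection stability, a scaled trace estimate, and a polynomial inverse inequality. A minor but worthwhile difference in ordering: for the normal-derivative bound you apply the discrete trace first (to $\partial q/\partial\nu$ on $\sigma$, passing to $\tau$) and then the cell inverse estimate; the paper's written chain instead begins with an intermediate bound of the form $\norm{\partial_{\n_\sigma}(\Pip\v)}{L^2(\sigma)}\leq\cinv h_\sigma^{-1}\norm{\Pip\v}{L^2(\sigma)}$, which as stated (a bound of the normal derivative trace by the function's trace) is not what Lemma~\ref{lem:ie} actually supplies and is in fact false for general polynomials on $\tau$ — your trace-then-inverse ordering sidesteps this. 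You also make explicit the cellwise locality of $\Pip$, which is the form of stability \eqref{eq:stability:lem:L2estimates} that the final step genuinely requires (the lemma as stated gives only the global $L^2(\Omega)$ bound); the paper uses the local version silently. So your write-up is, if anything, slightly more careful on both counts while staying within the same proof strategy.
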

\begin{proof}
The stability estimate \eqref{eq:stability:lem:L2estimates} follows from orthogonality of the residual $\v-\Pip\v$ to $\Pip\v$.
To establish \eqref{eq:inverse:lem:L2estimates}, we will only prove the second one, as the first estimate follows similarly.
Let $\v\in L^2(\tau)$.
In view of Lemma \ref{lem:ie} and stability \eqref{eq:stability:lem:L2estimates}
\begin{equation}
\begin{split}
\left\norm{\textstyle\dfrac{(\Pip\v)}{\n_\sigma}\right}{L^2(\sigma)}^2&
\leq\cinv h_\sigma^{-2}\norm{\Pip\v}{L^2(\sigma)}^2\\
&\leq\cdtrace\cinv h_\sigma^{-3}\norm{\Pip\v}{L^2(\tau)}^2
\leq\cdtrace\cinv h_\sigma^{-3}\norm{\v}{L^2(\tau)}^2.
\end{split}
\end{equation}
\end{proof}
We will assess the inconsistency and show that the formulation \eqref{eq:ndp} is in fact consistent asymptotically.
For this we will need some approximation tools.
\begin{lemma}\label{lem:L2error}
Let $\mesh$ be an admissible partition, let $\tau\in\mesh$ and let $\sigma\in\bedges_\mesh$ with $\sigma\subset\diff\tau$.
For a constant $\cproj>0$, depending only on $\cshape$,
if $0\leq t\leq s\leq r-1$ then
\begin{equation}\label{eq:result1:lem:L2error}
\semi{\v-\Pip(\v)}{H^t(\tau)}\leq \cproj h_\tau^{s-t}\semi{\v}{H^s(\tau)},
\end{equation}
and
\begin{equation}\label{eq:result2:lem:L2error}
\norm{\v-\Pip(\v)}{L^2(\sigma)}\leq\cproj h_\sigma^{s-1/2}\semi{\v}{H^s(\tau)},
\end{equation}
holding for every $\v\in\H(\Omega)$.
\end{lemma}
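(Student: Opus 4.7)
The plan is to treat this as a standard Bramble--Hilbert/trace estimate for the $L^2$-orthogonal projection. The first inequality is the interior approximation estimate and the second follows from the first by a scaled trace inequality, so the real work is concentrated in \eqref{eq:result1:lem:L2error}.

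For \eqref{eq:result1:lem:L2error}, I would first pass to a reference cell $\hat\tau$ of unit size via an affine diffeomorphism $F_\tau:\hat\tau\to\tau$. Under pull-back, the $L^2$-orthogonal projector $\Pip|_\tau$ onto $\bb{P}_{r-2}(\tau)$ transforms into the analogous reference projector $\hat\Pi$ onto $\bb{P}_{r-2}(\hat\tau)$, because affine maps preserve polynomial degree and the change-of-variables introduces only a constant Jacobian. On $\hat\tau$ the operator $I-\hat\Pi:H^s(\hat\tau)\to H^t(\hat\tau)$ is bounded (it factors through the stable $L^2$ projection and a finite-dimensional range in a norm-equivalent setting, noting $s\leq r-1$ so $H^s(\hat\tau)\hookrightarrow L^2(\hat\tau)$) and vanishes on $\bb{P}_{s-1}(\hat\tau)\subseteq\bb{P}_{r-2}(\hat\tau)$, where the inclusion uses exactly the hypothesis $s\leq r-1$. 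The Bramble--Hilbert / Deny--Lions lemma (in its fractional Dupont--Scott form if $s$ is non-integer) then gives
\begin{equation}\nonumber
\semi{\hat\v-\hat\Pi\hat\v}{H^t(\hat\tau)}\leq C\,\semi{\hat\v}{H^s(\hat\tau)},
\end{equation}
with $C$ depending only on $r$, $s$, $t$ and the shape of $\hat\tau$. Scaling back to $\tau$ picks up the factor $h_\tau^{s-t}$ (since each derivative scales like $h_\tau^{-1}$ and the volume element like $h_\tau^d$, producing exactly $h_\tau^{s-t}$ after accounting for the two sides), and shape-regularity of $\mesh$ (through $\cshape$) absorbs the Jacobian constants into $\cproj$.

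For \eqref{eq:result2:lem:L2error}, I would invoke the standard scaled multiplicative trace inequality on $\tau$,
\begin{equation}\nonumber
\norm{\w}{L^2(\sigma)}^2\leq C\bigl(h_\sigma^{-1}\norm{\w}{L^2(\tau)}^2+h_\sigma\,\semi{\w}{H^1(\tau)}^2\bigr),
\end{equation}
(this is Lemma~\ref{lem:ie}-type, i.e.\ the discrete trace/inverse estimate toolkit already used in the proof of Lemma~\ref{lem:L2estimates}), applied with $\w=\v-\Pip\v$. Plugging in \eqref{eq:result1:lem:L2error} with $(t,s)$ replaced by $(0,s)$ and $(1,s)$ respectively, and using shape-regularity $h_\sigma\iso h_\tau$, both resulting terms collapse to $h_\sigma^{2s-1}\semi{\v}{H^s(\tau)}^2$, giving the claim.

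The main obstacle is the fractional case of \eqref{eq:result1:lem:L2error}: for integer $s$ and $t$ the argument is textbook, but when $s$ and/or $t$ is non-integer the Bramble--Hilbert step requires a fractional-order variant and one must be slightly careful about the precise definition of $\semi{\cdot}{H^s(\tau)}$ used (Sobolev--Slobodeckij seminorm), and about the fact that the usual polynomial-free projection argument goes through because $H^s(\hat\tau)$ is still a Banach space on which the quotient by $\bb{P}_{s-1}(\hat\tau)$ has norm equivalent to the seminorm. Once that is in hand, the rest is just scaling and trace bookkeeping.
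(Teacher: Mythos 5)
Your proposal is correct and reaches the same conclusions, but your route for the interior estimate \eqref{eq:result1:lem:L2error} differs structurally from the paper's. The paper stays on the physical cell: it inserts an arbitrary polynomial $\rho$, writes $\semi{\v-\Pip\v}{H^t(\tau)}\leq\semi{\v-\rho}{H^t(\tau)}+\semi{\Pip(\rho-\v)}{H^t(\tau)}$, controls the second term by the polynomial inverse estimate \eqref{eq:inve:lem:ie} and the $L^2$-stability \eqref{eq:stability:lem:L2estimates}, and then applies Bramble--Hilbert once to the resulting best-approximation terms. You instead scale to the reference cell, argue that $I-\hat\Pi$ is a bounded operator $H^s(\hat\tau)\to H^t(\hat\tau)$ vanishing on the appropriate polynomial space, invoke Deny--Lions/Bramble--Hilbert there, and scale back. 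Both are textbook-standard and both hinge on the embedding $\bb{P}_{\lceil s\rceil-1}\subseteq\bb{P}_{r-2}$ guaranteed by $s\leq r-1$; the paper's version sidesteps the need to establish Sobolev-norm boundedness of $\hat\Pi$ by leaning on the inverse estimate, whereas yours requires that boundedness (which you correctly justify via the finite rank of the projector and the embedding $H^s\hookrightarrow L^2$). Your worry about the fractional case is reasonable but equally present in the paper's argument, since Bramble--Hilbert is invoked there too; the constant ends up depending only on the polynomial degree and $\cshape$, as you note. For the boundary estimate \eqref{eq:result2:lem:L2error}, your argument (scaled trace inequality \eqref{eq:GeneralTrace} applied to $\v-\Pip\v$, followed by the interior estimate at orders $0$ and $1$, and $h_\sigma\iso h_\tau$) is exactly the paper's.

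One small caution: the Bramble--Hilbert step needs a single $\rho$ (or a single polynomial in the quotient) that controls both orders $t$ and $0$ simultaneously; the paper's presentation implicitly uses this, and in your version it is automatic since you appeal directly to the quotient-norm equivalence. Also, write $\bb{P}_{\lceil s\rceil-1}$ rather than $\bb{P}_{s-1}$ so the inclusion into $\bb{P}_{r-2}$ is well-posed for non-integer $s$ (you acknowledge this issue at the end, so this is merely a notational point).
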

\begin{proof}
Let $1\leq t\leq s\leq r+1$ and let $\v\in\H(\Omega)$.
Let $\rho\in\bb{P}_r(\tau)$.
\begin{equation}
\begin{split}
\semi{\v-\Pip\v}{H^t(\tau)}&\leq\semi{\v-\rho}{H^t(\tau)}+\semi{\Pip(\rho-\v)}{H^t(\tau)}\\
&\leq\semi{\v-\rho}{H^t(\tau)}+\cinv h_\tau^{-t}\norm{\Pip(\rho-\v)}{L^2(\tau)}
\end{split}
\end{equation}
with the classical Bramble-Hilbert lemma we arrive at \eqref{eq:result1:lem:L2error} with $\cproj= (1+\cinv)c_\rm{HB}$ with $c_\rm{HB}>0$ is the proportionality constant of Bramble-Hilbert lemma.
Now in view of \eqref{eq:GeneralTrace}
\begin{equation}
\begin{split}
\norm{\v-\Pip\v}{L^2(\sigma)}^2\leq&\ctrace\left(h_\sigma^{-1}\norm{\v-\Pip\v}{L^2(\tau)}^2+h_\sigma\semi{\v-\Pi\v}{H^1(\tau)}^2\right)\\
&\ctrace\cproj\left(h_\sigma^{-1}h_\tau^{2s}\semi{\v}{H^s(\tau)}^2+
h_\sigma h_\tau^{2s-2}\semi{\v}{H^s(\tau)}^2\right)\\
&\leq \ctrace\cproj h_\sigma^{2s-1}\semi{\v}{H^s(\tau)}^2.
\end{split}
\end{equation}
\end{proof}
\begin{lemma}[Asymptotic consistency]\label{lem:AsymptoticConsistency}
If $\u\in\bb{E}(\Omega)$ is the solution to \eqref{eq:cwp} for which $\Lap\u\in H^s(\Omega)$, $s>0$, then for $\v\in H^2(\Omega)$,
\begin{equation}\label{eq:Resut:lem:AsymptoticConsistency}
\inner{\Ep,\v}\leq\cproj h_\mesh^s\left\norm{\Lap\u\right}{H^s(\Omega)}
\left(\norm{\v}{3/2,\mesh}+\left\norm{\textstyle\dfrac{\v}{\nu}\right}{1/2,\mesh}\right).
\end{equation}
\end{lemma}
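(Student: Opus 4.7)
The plan is to split $\inner{\Ep,\v}$ into the two boundary integrals defining it in \eqref{eq:it}, bound each edge by edge with Cauchy--Schwarz, and assemble through a discrete Cauchy--Schwarz over $\sigma\in\bedges_\mesh$. The decisive bookkeeping is that the $h_\sigma$-weights produced by Lemma~\ref{lem:L2error} exactly match the negative weights hiding in the seminorms $\norm{\cdot}{3/2,\mesh}$ and $\norm{\dfrac{\cdot}{\nu}}{1/2,\mesh}$, leaving a common factor of $h_\sigma^s$ which is controlled by $h_\mesh^s$ since $h_\sigma\leq h_\mesh$.

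For the integral pairing $\Pip(\Lap\u)-\Lap\u$ against $\dfrac{\v}{\nu}$, I would apply Cauchy--Schwarz on each $\sigma\subset\diff\tau$ and invoke the boundary estimate \eqref{eq:result2:lem:L2error} with $t=0$:
$\norm{\Pip(\Lap\u)-\Lap\u}{L^2(\sigma)}\leq\cproj h_\sigma^{s-1/2}\semi{\Lap\u}{H^s(\tau)}$.
Writing $h_\sigma^{s-1/2}=h_\sigma^{s}\cdot h_\sigma^{-1/2}$ places the leftover $h_\sigma^{-1/2}$ on the companion factor $\norm{\dfrac{\v}{\nu}}{L^2(\sigma)}$, which is precisely what $\norm{\dfrac{\v}{\nu}}{1/2,\mesh}$ collects upon summation.

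The companion integral involving $\dfrac{(\Pip(\Lap\u)-\Lap\u)}{\nu}$ paired with $\v$ is the delicate one, since Lemma~\ref{lem:L2error} does not directly estimate the trace of a normal derivative of the projection error. I would bridge the gap with a scaled trace inequality
$\norm{\dfrac{\w}{\nu}}{L^2(\sigma)}^2\lapprox h_\sigma^{-1}\semi{\w}{H^1(\tau)}^2+h_\sigma\semi{\w}{H^2(\tau)}^2$
applied to $\w=\Lap\u-\Pip(\Lap\u)$, and then estimate each interior seminorm by \eqref{eq:result1:lem:L2error} with $t=1$ and $t=2$. Both contributions collapse to $h_\sigma^{s-3/2}\semi{\Lap\u}{H^s(\tau)}$, and splitting $h_\sigma^{s-3/2}=h_\sigma^{s}\cdot h_\sigma^{-3/2}$ pairs with $\norm{\v}{L^2(\sigma)}$ in exactly the form aggregated by $\norm{\v}{3/2,\mesh}$.

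Summing the two per-edge bounds and applying a discrete Cauchy--Schwarz peels off the data factor $h_\mesh^s\norm{\Lap\u}{H^s(\Omega)}$, using that the shape regularity of $\mesh$ bounds the number of boundary cells touching any given edge so that $\sum_\sigma\semi{\Lap\u}{H^s(\tau_\sigma)}^2\lapprox\norm{\Lap\u}{H^s(\Omega)}^2$. The main obstacle is the normal-derivative trace step: for \eqref{eq:result1:lem:L2error} to be invoked at $t=2$ one effectively requires $s\geq 2$ so that $\semi{\Lap\u}{H^2(\tau)}$ is meaningful; for the borderline range $0<s<2$ one would need to interpolate between integer-order estimates, or derive a fractional variant of the trace inequality, to close the argument at the stated rate $h_\mesh^s$.
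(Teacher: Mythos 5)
Your approach matches the paper's: both split $\inner{\Ep,\v}$ into the two defining boundary integrals in \eqref{eq:it}, apply Cauchy--Schwarz edge by edge, and assemble via a discrete Cauchy--Schwarz over $\bedges_\mesh$. The only place you diverge is precisely where the paper is thin: the paper cites Lemma~\ref{lem:L2error} for the bound $\left\norm{\textstyle\dfrac{}{\n_\sigma}(\Pip(\Lap\u)-\Lap\u)\right}{L^2(\sigma)}\leq\cproj h_\sigma^{s-3/2}\norm{\Lap\u}{H^s(\Omega)}$, but that lemma establishes only the interior seminorm error \eqref{eq:result1:lem:L2error} and the plain edge $L^2$ trace \eqref{eq:result2:lem:L2error}; it says nothing about the boundary trace of the \emph{normal derivative} of the projection error. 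Your fill-in --- the scaled trace inequality \eqref{eq:GeneralTrace} applied to $\grad(\Lap\u-\Pip(\Lap\u))$, followed by \eqref{eq:result1:lem:L2error} at $t=1$ and $t=2$ --- is exactly the right way to supply that missing step and reproduces the asserted rate.

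Your concern about the range of $s$ is also well-founded. Invoking \eqref{eq:result1:lem:L2error} at $t=2$ formally requires $s\ge2$, and more fundamentally the trace $\dfrac{(\Lap\u)}{\nu}\in L^2(\Gamma)$ appearing in \eqref{eq:it} is only meaningful when $\Lap\u\in H^s$ with $s>\tfrac{3}{2}$; the hypothesis ``$s>0$'' in the lemma statement is therefore too weak as written, and the paper tacitly acknowledges this by assuming $s>\tfrac{3}{2}$ in the subsequent a priori estimate lemma. For the intermediate range $\tfrac{3}{2}<s<2$ one needs either a fractional-order version of the Bramble--Hilbert/trace machinery or operator interpolation between the integer cases, as you suggest; neither the paper nor your sketch actually supplies it, but you have correctly isolated the point where the argument, as currently written, does not close.
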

\begin{proof}
\begin{equation}
\begin{split}
\inner{\Ep,\v}&=\int_\Gamma\left(\textstyle\dfrac{\Pip(\Lap\u)}{\nu}-\dfrac{\Lap\u}{\nu}\right)\v
-\int_\Gamma\left(\Pip(\Lap\u)-\Lap\u\right)\textstyle\dfrac{\v}{\nu}\\
&\leq\sum_{\sigma\in\bedges}\left\norm{\textstyle\dfrac{}{\n_\sigma}\left(\Pip(\Lap\u)-\Lap\u\right)\right}{L^2(\sigma)}\norm{\v}{L^2(\sigma)}
+\sum_{\sigma\in\bedges}\norm{\Pip(\Lap\u)-\Lap\u}{L^2(\sigma)}\left\norm{\textstyle\dfrac{\v}{\n_\sigma}\right}{L^2(\sigma)}
\end{split}
\end{equation}
In view of the projection error analysis of Lemma \eqref{lem:L2error}
\begin{equation}
\norm{\Pip(\Lap\u)-\Lap\u}{L^2(\sigma)}\leq\cproj h_\sigma^{s-1/2}\norm{\Lap\u}{H^s(\Omega)}
\end{equation}
and
\begin{equation}
\left\norm{\textstyle\dfrac{}{\n_\sigma}\left(\Pip(\Lap\u)-\Lap\u\right)\right}{L^2(\sigma)}\leq \cproj h_\sigma^{s-3/2}\norm{\Lap\u}{H^s(\Omega)}
\end{equation}
which leads us to the desired estimate.
\end{proof}
%
\subsection{The adaptive method}
We now recall the modules \textbf{SOLVE}, \textbf{ESTIMATE},
\textbf{MARK} and \textbf{REFINE}. A thorough discussion has already been carried in \cite{} with some minor differences.
\subsubsection*{The module SOLVE}
The discrete problem reads
\begin{equation}\label{eq:dnp}
\U=\ms{SOLVE}[\mesh,f]:\quad\text{Find}\ \U\in\Xp\ \text{such that}\ \ap(\U,\V)=\ellf(\V)\ \text{for all}\ \V\in\Xp.
\end{equation}
The stability of the problem will be addressed in Lemma \ref{lem:csvnbf} where we show that the bilinear form is coercive for large enough stabilization parameters $\gamma_1$ and $\gamma_2$. 
In view of the inconsistency \eqref{eq:res:lem:ac} we are left with partial Galerkin orthogonality:
\begin{equation}\label{eq:PartialGalerkin}
\ap(\u-\U,\V)=0\quad\forall\V\in\Xp\cap H^2_0(\Omega).
\end{equation}
\subsubsection*{The module ESTIMATE}
For a continuous function $\v$ we define the jump operator across interface $\sigma$.
\begin{equation}
\jump{\v}{\sigma}=\lim_{t\to0}[\v(x+t\sigma)-\v(x-tx)],\quad x\in\sigma.
\end{equation}
The adaptive refinement procedure of method \eqref{eq:afem} will aim to reduce the error estimations instructed by the cell-wise error indicators: for $\tau\in\mesh$
\begin{equation}\label{eq:indicator}
\eta_\mesh^2(\V,\face)=h_\face^4\norm{f-\Op\V}{L^2(\tau)}^2
+\sum_{\sigma\subset\diff\tau}
\left(\textstyle h_\sigma^{3}\left\norm{\jump{\dfrac{\Lap\V}{\n_\sigma}}{\sigma}\right}{L^2(\sigma)}^2
+h_\sigma\norm{\jump{\Lap\V}{\sigma}}{L^2(\sigma)}^2\right)
\end{equation}
We can define the indicators on subsets of $\Omega$ via:
\begin{equation}\label{eq:estimator}
\eta_\mesh^2(\V,\omega)=\sum_{\tau\in \mesh:\tau\subset\omega}\eta_\mesh^2(\V,\face),\quad\omega\subseteq\Omega
\end{equation}
To each cell $\tau$ in mesh $\mesh$ the error indicators \eqref{eq:indicator} will assign error estimations:
\begin{equation}
\{\eta_\tau:\tau\in\mesh\}=\ms{ESTIMATE}[\U,\mesh]:\quad\eta_\tau:=\eta_\mesh(\U,\U)
\end{equation}
We define data \emph{oscillation}
\begin{equation}
\osc_\mesh^2(\eff,\omega)=\sum_{\tau\subset\omega}h_\tau^4\norm{\eff-\Pip\eff}{L^2(\tau)}^2.
\end{equation}
\begin{remark}
Estimator dominance over oscialltion
\begin{equation}\label{eq:EstimatorDominance}
\osc_\mesh(\eff,\Omega)\leq\eta_\mesh(\U,\Omega)
\end{equation}
Estimator and oscillation monotonicity
\begin{equation}
\osc_{\mesh_\ast}(\eff,\Omega)\leq\osc_{\mesh}(\eff,\Omega),\quad
\eta_{\mesh_\ast}(\U_\ast,\Omega)\leq\eta_\mesh(\U,\Omega).
\end{equation}
\end{remark}
\subsubsection*{The module MARK}
We follow the Dorlfer marking strategy \cite{dorfler1996convergent}: For $0<\theta\leq1$,
\begin{equation}\label{markingstrategy}
\text{Find minimal spline set}\ \marked:\quad\sum_{\cell\in\marked}\eta^2_\mesh(\U,\cell)\ge\theta\sum_{\tau\in\mesh}\eta_\mesh^2(\U,\cell).
\end{equation}
To ensure minimal cardinality of $\cal{M}$ in the marking strategy one typically undergoes QuickSort which has an average complexity of $\cal{O}(n\log n)$ to produce the indexing set $J$.
\subsubsection*{The module REFINE}
Here we provide the important properties of \textbf{REFINE} which are needed in subsequent analyses and refer the reader to \cite{buffa2016adaptive},\cite{} for a detailed description. Procedure \textbf{REFINE} will ensure that for a constant $\cshape>0$, depending only on the polynomial degree of the spline space, all considered partitions therefore will satisfy the shape-regularity constraints:
\begin{eqnarray}\label{eq:sr}
\nonumber\sup_{\mesh\in\scr{P}}\max_{\tau\in\mesh}\#\left\{\tau\in\mesh:\tau\in\omega_\tau\right\}\leq\cshape&&\text{(finite-intersection property)},\\
\sup_{\mesh\in\scr{P}}\max_{\tau\in\mesh}\frac{\diam(\omega_\tau)}{h_\tau}\leq \cshape
&&(\text{graded}).
\end{eqnarray}
For any two partitions $\mesh_1,\mesh_2\in\scr{P}$ there exists a common admissible partition in $\scr{P}$, called the \emph{overlay} and denoted by $\mesh_1\oplus \mesh_2$, such that 
\begin{equation}\label{eq:MeshOverelay}
\#(\mesh_1\oplus\mesh_2)\leq\#\mesh_1+\#\mesh_2-\#\mesh_0.
\end{equation}
Moreover, shown in \cite{buffa2016complexity},
if the sequence $\{\mesh_\ell\}_{\ell\ge1}$ is obtained by repeating the step $\mesh_{\ell+1}:=\ms{ REFINE}\,[\mesh_\ell,\scr{M}_\ell]$ with $\scr{M}_\ell$ any subset of $\mesh_\ell$, then for $k\ge1$ we have that
\begin{equation}\label{eq:MarkingComplexity}
\#P_k-\#P_\ell\leq \Lambda\sum_{\ell=1}^{k}\#\scr{M}_\ell.
\end{equation}
where $\Lambda>0$ which will depend on the polynomial degree $r$.
\section{A priori analysis for Nitsche's formulation}
In what follows we show the proposed discrete problem admits an a priori estimate. This will be immediate from upon estabishing that mesh-dependent bilinear form is bounded and coercive for sufficiently large stabilization parameters $\gamma_1$ and $\gamma_2$ with respect to mesh-dependent norm \eqref{eq:meshnorm}. 
\begin{lemma}[Continuity of $\ap$]\label{lem:ctynbf}
Let $\gamma_1,\gamma_2>0$ be given.
We have
\begin{equation}\label{eq:Continuity}
|\ap(\u,\v)|\leq\Ccont\enorm{\u}{\mesh}\enorm{\v}{\mesh}\quad\u,\v\in H^2(\Omega),
\end{equation}
with a constant $\Ccont>0$ independent of $\mesh$.
\end{lemma}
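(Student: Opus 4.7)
The plan is to bound each of the six terms in the expansion \eqref{eq:nbf} of $\ap(\u,\v)$ separately, using Cauchy--Schwarz together with the inverse-trace estimates of Lemma \ref{lem:L2estimates}, and then combine the resulting pieces into $\enorm{\u}{\mesh}\enorm{\v}{\mesh}$.

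First I would dispense with the easy terms. The volume piece $\a(\u,\v) = (\Lap\u,\Lap\v)_{L^2(\Omega)}$ is immediately bounded by $\norm{\Lap\u}{L^2(\Omega)}\norm{\Lap\v}{L^2(\Omega)}\leq\enorm{\u}{\mesh}\enorm{\v}{\mesh}$. The two symmetric stabilization integrals follow from an edgewise Cauchy--Schwarz plus a discrete Cauchy--Schwarz over $\bedges_\mesh$, giving
\begin{equation*}
\gamma_1\int_\Gamma h_\Gamma^{-3}\u\v \leq \gamma_1\norm{\u}{3/2,\mesh}\norm{\v}{3/2,\mesh},\qquad \gamma_2\int_\Gamma h_\Gamma^{-1}\textstyle\dfrac{\u}{\nu}\dfrac{\v}{\nu} \leq \gamma_2\left\norm{\textstyle\dfrac{\u}{\nu}\right}{1/2,\mesh}\left\norm{\textstyle\dfrac{\v}{\nu}\right}{1/2,\mesh},
\end{equation*}
each of which is dominated by $\enorm{\u}{\mesh}\enorm{\v}{\mesh}$ after distributing a factor $\sqrt{\gamma_i}$ onto each norm.

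The real work is in the four cross terms involving $\Pip(\Lap\cdot)$ on $\Gamma$. For the representative one $\int_\Gamma \Pip(\Lap\u)\,\dfrac{\v}{\nu}$, I would split over $\sigma\in\bedges_\mesh$, apply Cauchy--Schwarz on each edge, and invoke the first inequality in \eqref{eq:inverse:lem:L2estimates} to obtain $\norm{\Pip(\Lap\u)}{L^2(\sigma)} \leq \c\,h_\sigma^{-1/2}\norm{\Lap\u}{L^2(\tau_\sigma)}$, where $\tau_\sigma\in\mesh$ is the unique cell with $\sigma\subset\diff\tau_\sigma$. Pairing the $h_\sigma^{-1/2}$ with the remaining boundary factor $\left\norm{\textstyle\dfrac{\v}{\nu}\right}{L^2(\sigma)}$ and then applying a discrete Cauchy--Schwarz---using the bounded multiplicity of boundary cells afforded by the finite-intersection property \eqref{eq:sr}---yields $\c\,\cshape^{1/2}\norm{\Lap\u}{L^2(\Omega)}\left\norm{\textstyle\dfrac{\v}{\nu}\right}{1/2,\mesh}$. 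The companion term $\int_\Gamma \dfrac{\Pip(\Lap\u)}{\nu}\,\v$ is treated identically using the second inequality in \eqref{eq:inverse:lem:L2estimates}, producing $\c\,\cshape^{1/2}\norm{\Lap\u}{L^2(\Omega)}\norm{\v}{3/2,\mesh}$, and the two remaining cross terms follow by exchanging $\u\leftrightarrow\v$.

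Putting everything together, each cross term is majorized by $\c\,\cshape^{1/2}\gamma_i^{-1/2}\enorm{\u}{\mesh}\enorm{\v}{\mesh}$ after absorbing $\norm{\Lap\u}{L^2(\Omega)}\leq\enorm{\u}{\mesh}$ together with $\norm{\cdot}{s,\mesh}\leq\gamma_i^{-1/2}\enorm{\cdot}{\mesh}$ for the appropriate $i\in\{1,2\}$; summing the six contributions delivers \eqref{eq:Continuity} with $\Ccont$ depending only on $\c$, $\cshape$, $\gamma_1$, and $\gamma_2$, hence $\mesh$-independent as asserted. There is no genuine obstacle here; the only point requiring care is to match the $h_\sigma$ powers supplied by Lemma \ref{lem:L2estimates} against the weights $h_\sigma^{-1}$ and $h_\sigma^{-3}$ built into $\norm{\cdot}{1/2,\mesh}$ and $\norm{\cdot}{3/2,\mesh}$, which is exactly what the two inverse-trace estimates of \eqref{eq:inverse:lem:L2estimates} are calibrated to deliver.
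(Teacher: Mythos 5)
Your proposal is correct and follows essentially the same route as the paper: Cauchy--Schwarz on the volume and stabilization terms, then the inverse-trace estimates of Lemma~\ref{lem:L2estimates} to convert $\norm{\Pip(\Lap\cdot)}{L^2(\sigma)}$ and $\norm{\diff_\nu\Pip(\Lap\cdot)}{L^2(\sigma)}$ into $h_\sigma^{-1/2}\norm{\Lap\cdot}{L^2(\tau_\sigma)}$ and $h_\sigma^{-3/2}\norm{\Lap\cdot}{L^2(\tau_\sigma)}$, matching the weights built into $\norm{\cdot}{1/2,\mesh}$ and $\norm{\cdot}{3/2,\mesh}$. You are actually slightly more careful than the paper in spelling out the discrete Cauchy--Schwarz over $\bedges_\mesh$ and the bounded-multiplicity argument needed to reassemble $\sum_\sigma\norm{\Lap\u}{L^2(\tau_\sigma)}^2\lapprox\norm{\Lap\u}{L^2(\Omega)}^2$, and in making explicit how the $\gamma_i^{\pm1/2}$ factors are distributed so that each piece is dominated by $\enorm{\u}{\mesh}\enorm{\v}{\mesh}$.
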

\begin{proof}
We begin with the interior integrals;
\begin{equation}
\ap(\u,\v)\leq \norm{\Lap\u}{L^2(\Omega)}\norm{\Lap\v}{L^2(\Omega)}.
\end{equation}
As for the boundary terms,
\begin{equation}
\lambda_\mesh^\ast(\u,\v)\leq\norm{\u}{L^2(\Gamma)}\left\norm{\textstyle\dfrac{\Pi(\Lap\v)}{\nu}\right}{L^2(\Gamma)}
+\left\norm{\textstyle\dfrac{\u}{\nu}\right}{L^2(\Gamma)}\left\norm{\Pi(\Lap\v)\right}{L^2(\Gamma)}
\end{equation}
\begin{equation}
\begin{split}
\lambda_\mesh^\ast(\u,\v)&\lapprox\left\norm{h_\Gamma^{-3/2}\u\right}{L^2(\Gamma)}\left\norm{\Lap\u\right}{L^2(\Omega)}
+\left\norm{\textstyle h_\Gamma^{-1/2}\dfrac{\u}{\nu}\right}{L^2(\Gamma)}\left\norm{\Lap\v\right}{L^2(\Omega)}\\
&\lapprox \left(\norm{\u}{3/2,\mesh}+\left\norm{\textstyle\dfrac{\u}{\nu}\right}{1/2,h}\right)\left\norm{\Lap\v\right}{L^2(\Omega)}
\end{split}
\end{equation}
Similarily,
\begin{equation}
\lambda_\mesh(\u,\v)\leq \left\norm{\Lap\u\right}{L^2(\Omega)}\left(\norm{\v}{3/2,\mesh}+\left\norm{\textstyle\dfrac{\v}{\nu}\right}{1/2,\mesh}\right).
\end{equation}
The stabilization terms are similarly controlled
\begin{equation}
\Sigma_\mesh(\u,\v)\leq\gamma_1\left\norm{\u\right}{3/2,\mesh}\left\norm{\v\right}{3/2,\mesh}+\gamma_2\left\norm{\textstyle\dfrac{\u}{\nu}\right}{1/2,\mesh}\left\norm{\textstyle\dfrac{\v}{\nu}\right}{1/2,\mesh}
\end{equation}
\end{proof}
%
%
\begin{lemma}[Coercivity of $\ap$]\label{lem:csvnbf}
For suitably large stabilization parameters $\gamma_1$ and $\gamma_2$, there exists a constant $\Ccoer>0$ such that
\begin{equation}\label{eq:Coercivity}
\Ccoer\enorm{\v}{\mesh}^2\leq\ap(\v,\v)\quad\forall\v\in H^2(\Omega).
\end{equation}
\end{lemma}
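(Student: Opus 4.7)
The plan is to set $u = v$ in the definition \eqref{eq:nbf}, which yields
\begin{equation*}
\ap(v,v) = \norm{\Lap v}{L^2(\Omega)}^2 + 2\lambda_\mesh(v,v) + \Sigma_\mesh(v,v),
\end{equation*}
and to argue that the signed boundary contribution $2\lambda_\mesh(v,v)$ can be absorbed by a small multiple of $\norm{\Lap v}{L^2(\Omega)}^2$ together with a controlled fraction of $\Sigma_\mesh(v,v)$. The essential point is that the appearance of $\Pip(\Lap v)$ in place of the bare trace of $\Lap v$, which need not even exist for $v \in \H(\Omega)$, opens the door to the scaled inverse trace estimates of Lemma~\ref{lem:L2estimates}.

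First I would localize each of the two pieces making up $\lambda_\mesh(v,v)$ into a sum over $\sigma \in \bedges_\mesh$ and apply Cauchy-Schwarz edge-by-edge. Lemma~\ref{lem:L2estimates} converts $\norm{\Pip(\Lap v)}{L^2(\sigma)}$ and $\norm{\diff\Pip(\Lap v)/\diff \nu}{L^2(\sigma)}$ respectively into $\c h_\sigma^{-1/2}\norm{\Lap v}{L^2(\tau_\sigma)}$ and $\c h_\sigma^{-3/2}\norm{\Lap v}{L^2(\tau_\sigma)}$, where $\tau_\sigma$ is the cell adjoining $\sigma$. The surviving boundary factors $h_\sigma^{-1/2}\norm{\diff v/\diff\nu}{L^2(\sigma)}$ and $h_\sigma^{-3/2}\norm{v}{L^2(\sigma)}$ are by construction the per-edge contributions to $\norm{\diff v/\diff\nu}{1/2,\mesh}$ and $\norm{v}{3/2,\mesh}$.

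Next, a discrete Cauchy-Schwarz summation over $\sigma \in \bedges_\mesh$, combined with the finite-intersection property of \eqref{eq:sr} (each cell touches only a bounded number of boundary edges), would yield
\begin{equation*}
|\lambda_\mesh(v,v)| \lapprox \c\cshape^{1/2}\, \norm{\Lap v}{L^2(\Omega)}\bigl(\norm{v}{3/2,\mesh} + \norm{\diff v/\diff\nu}{1/2,\mesh}\bigr).
\end{equation*}
Young's inequality with parameter $\epsilon > 0$ then splits this product, producing a $\norm{\Lap v}{L^2(\Omega)}^2$ contribution of order $\epsilon$ and mesh-norm contributions of order $\epsilon^{-1}\c^2\cshape$. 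Choosing $\epsilon$ small enough to retain, say, $\tfrac{1}{2}\norm{\Lap v}{L^2(\Omega)}^2$ after subtraction, and then requiring $\gamma_1$ and $\gamma_2$ to exceed a threshold proportional to $\c^2\cshape$, leaves strictly positive residuals in front of $\norm{v}{3/2,\mesh}^2$ and $\norm{\diff v/\diff\nu}{1/2,\mesh}^2$, delivering \eqref{eq:Coercivity} with $\Ccoer$ depending only on $\c$ and $\cshape$.

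The main obstacle I anticipate is the bookkeeping needed to verify that the negative $h_\sigma$-powers produced by Lemma~\ref{lem:L2estimates} combine with the edge weights in $\Sigma_\mesh$ to match exactly the weights in the mesh-dependent norm \eqref{eq:meshnorm}; this alignment is by design, but one must confirm that the resulting threshold on $\gamma_1$ and $\gamma_2$ is independent of the particular partition $\mesh$, depending only on the shape-regularity constant $\cshape$ from \eqref{eq:sr} and the projection-inverse constant $\c$ from Lemma~\ref{lem:L2estimates}, so that the same $\gamma_1,\gamma_2$ can be used uniformly along the adaptive sequence.
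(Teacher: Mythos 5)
Your proposal is correct, and it follows the same overall strategy as the paper: set $u=v$ so that $\ap(\v,\v)=\norm{\Lap\v}{L^2(\Omega)}^2+2\lambda_\mesh(\v,\v)+\Sigma_\mesh(\v,\v)$, absorb the indefinite cross term $2\lambda_\mesh(\v,\v)$ using the $L^2$-projection inverse estimates of Lemma~\ref{lem:L2estimates} together with Young's inequality, and then take $\gamma_1,\gamma_2$ above an explicit threshold. The genuine difference lies in \emph{where} Young's inequality is applied, and your version is the more careful one. By localizing $\lambda_\mesh(\v,\v)$ over $\sigma\in\bedges_\mesh$, applying Cauchy--Schwarz and the per-edge estimates $\norm{\Pip(\Lap\v)}{L^2(\sigma)}\leq\c\,h_\sigma^{-1/2}\norm{\Lap\v}{L^2(\tau_\sigma)}$ and $\norm{\diff_\nu\Pip(\Lap\v)}{L^2(\sigma)}\leq\c\,h_\sigma^{-3/2}\norm{\Lap\v}{L^2(\tau_\sigma)}$ \emph{before} summing, you keep the weight $h_\sigma^{-3/2}$ (resp.\ $h_\sigma^{-1/2}$) attached to $\norm{\v}{L^2(\sigma)}$ (resp.\ $\norm{\diff_\nu\v}{L^2(\sigma)}$), so that the discrete Cauchy--Schwarz summation together with the finite-intersection property yields $\norm{\Lap\v}{L^2(\Omega)}\left(\norm{\v}{3/2,\mesh}+\norm{\diff_\nu\v}{1/2,\mesh}\right)$ with a prefactor depending only on $\cshape$ and $\c$; Young's inequality then delivers a threshold on $\gamma_1,\gamma_2$ \emph{independent of $\mesh$}. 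The paper instead applies Young's inequality globally to the unweighted traces $\norm{\v}{L^2(\Gamma)}$, $\norm{\Pip(\Lap\v)}{L^2(\Gamma)}$, etc., and only afterwards invokes the inverse estimates; taken literally this produces the factor $\max_{\sigma}h_\sigma^{-3/2}$ (and should in fact be $h_\sigma^{-3}$ after squaring) multiplying $-\delta_1\norm{\Lap\v}{L^2(\Omega)}^2$, which would force $\delta_1$, and hence $\gamma_1$, to depend on the smallest boundary edge -- precisely the failure mode you correctly flagged as the bookkeeping obstacle. Your per-edge weighting is the right way to align the powers of $h_\sigma$ with the norm \eqref{eq:meshnorm}, and it gives a $\Ccoer$ and a $\gamma_1,\gamma_2$ threshold uniform over the adaptive hierarchy.
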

\begin{proof}
For $\delta_1,\,\delta_2>0$ we use Young's inequality to write
\begin{equation}
\begin{split}
\lambda_\mesh(\v,\v)+\lambda_\mesh^\ast(\v,\v)&\ge-\frac{1}{\delta_1}\norm{\v}{L^2(\Gamma)}^2
-\delta_1\left\norm{\textstyle\dfrac{\Pip(\Lap\v)}{\nu}\right}{L^2(\Gamma)}^2
-\frac{1}{\delta_2}\left\norm{\textstyle\dfrac{\v}{\nu}\right}{L^2(\Gamma)}^2
-\delta_2\left\norm{\Pip(\Lap\v)\right}{L^2(\Gamma)}^2.
\end{split}
\end{equation}
Together with the interior terms we have
\begin{equation}
\begin{split}
\ap(\v,\v)&\ge\norm{\Lap\v}{L^2(\Omega)}^2-\delta_1\left\norm{\textstyle\dfrac{\Pip(\Lap\v)}{\nu}\right}{L^2(\Gamma)}^2-\delta_2\left\norm{\Pip(\Lap\v)\right}{L^2(\Gamma)}^2\\
&+\left(1-\frac{1}{\gamma_1}-\frac{1}{\delta_1\gamma_1}\right)\gamma_1\left\norm{\psi\right}{3/2,\mesh}^2
+\left(1-\frac{1}{\delta_2\gamma_2}\right)\gamma_2\left\norm{\textstyle\dfrac{\v}{\nu}\right}{1/2,\mesh}^2
\end{split}
\end{equation}
With inverse estimates \eqref{eq:inverse:lem:L2estimates} 
\begin{equation}
\begin{split}
\ap(\v,\v)&\ge\left(1-\delta_1C\max_{\sigma\in\bedges_\mesh}h_\sigma^{-3/2}-\delta_2C\max_{\sigma\in\bedges_\mesh}h_\sigma^{-1/2}\right)\norm{\Lap\v}{L^2(\Omega)}^2\\
&+\left(1-\frac{1}{\gamma_1}-\frac{1}{\delta_1\gamma_1}\right)\gamma_1\left\norm{\v\right}{3/2,\mesh}^2
+\left(1-\frac{1}{\delta_2\gamma_2}\right)\gamma_2\left\norm{\textstyle\dfrac{\v}{\nu}\right}{1/2,\mesh}^2,
\end{split}
\end{equation}
For sufficiently small $\delta_1$ and $\delta_2$, pick $\gamma_1$ and $\gamma_2$ sufficiently large to yield the desired result. 
\end{proof}
Continuity and coercivity of the bilinear form ensures a unique solution $\U$ to the discrete problem \eqref{eq:dnp} which admits the following \emph{a priori} estimate.
\begin{lemma}[A priori error estimate for Nitsche's forumation]
Let $\u\in H^2_0(\Omega)$ be a solution to \eqref{eq:cwp} with $\Lap\u\in H^s(\Omega)$ with $s>\frac{3}{2}$. For stabilization paremeters $\gamma_1,\gamma_2>0$ satisfying the hypothesis of Lemma \ref{lem:csvnbf}, 
\begin{equation}
\enorm{\u-\U}{\mesh}\leq\left(1+\frac{\Ccont}{\Ccoer}\right)\inf_{\V\in\Xp}\enorm{\u-\V}{\mesh}+\frac{1}{\Ccoer}\norm{\Ep}{H^{-2}(\Omega)}.
\end{equation}
\end{lemma}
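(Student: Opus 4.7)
The plan is to argue via a second Strang lemma: continuity and coercivity of $\ap$ on $H^2(\Omega)$ (Lemmas~\ref{lem:ctynbf}--\ref{lem:csvnbf}) give the usual C\'ea-type bound, while the inconsistency identity of Lemma~\ref{lem:ac} supplies the additive perturbation measured by $\norm{\Ep}{H^{-2}(\Omega)}$. Fix an arbitrary $\V\in\Xp$ and split by the triangle inequality $\enorm{\u-\U}{\mesh}\leq\enorm{\u-\V}{\mesh}+\enorm{\V-\U}{\mesh}$, reducing matters to a bound on $\enorm{\V-\U}{\mesh}$ that will eventually be minimized over $\V$.

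Since $\V-\U\in\Xp\subset H^2(\Omega)$, coercivity yields
\begin{equation*}
\Ccoer\enorm{\V-\U}{\mesh}^2\leq\ap(\V-\U,\V-\U)=\ap(\V-\u,\V-\U)+\ap(\u-\U,\V-\U).
\end{equation*}
Continuity handles the first term: $\ap(\V-\u,\V-\U)\leq\Ccont\enorm{\V-\u}{\mesh}\enorm{\V-\U}{\mesh}$. For the second term I combine the discrete equation $\ap(\U,\V-\U)=\ellf(\V-\U)$ from \eqref{eq:dnp} with the inconsistency identity $\ap(\u,\V-\U)=\ellf(\V-\U)+\inner{\Ep,\V-\U}$ of Lemma~\ref{lem:ac} to conclude $\ap(\u-\U,\V-\U)=\inner{\Ep,\V-\U}$. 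Dualizing via the definition of $\norm{\Ep}{H^{-2}(\Omega)}$ gives $\inner{\Ep,\V-\U}\leq\norm{\Ep}{H^{-2}(\Omega)}\norm{\V-\U}{H^2(\Omega)}$. Dividing through by $\enorm{\V-\U}{\mesh}$ and then taking the infimum over $\V\in\Xp$ delivers the stated bound.

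The non-trivial point is the handling of the cross term $\ap(\u-\U,\V-\U)$. Because $\V-\U$ does not satisfy the boundary conditions, only the partial Galerkin orthogonality \eqref{eq:PartialGalerkin} is available, and this term does \emph{not} vanish; it reduces \emph{exactly} to $\inner{\Ep,\V-\U}$ thanks to Lemma~\ref{lem:ac}. To make the final coefficient $1/\Ccoer$ match, I need $\norm{\V-\U}{H^2(\Omega)}\lapprox\enorm{\V-\U}{\mesh}$ on $H^2(\Omega)$; this is intrinsic to the mesh-dependent norm \eqref{eq:meshnorm}, whose $h_\Gamma$-weighted boundary seminorms supply precisely the control at $\Gamma$ that is missing from $\norm{\Lap\cdot}{L^2(\Omega)}$ alone, and any residual constant can be absorbed. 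The regularity hypothesis $\Lap\u\in H^s(\Omega)$ with $s>3/2$ is needed only to guarantee finiteness of $\norm{\Ep}{H^{-2}(\Omega)}$ (via Lemma~\ref{lem:AsymptoticConsistency}) and plays no direct role in the estimate itself.
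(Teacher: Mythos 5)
Your proposal is correct and follows essentially the same second-Strang-lemma route as the paper: triangle inequality, coercivity/continuity for the C\'ea part, and the identity $\ap(\u-\U,\W)=\inner{\Ep,\W}$ obtained by subtracting the discrete equation from the inconsistency relation of Lemma~\ref{lem:ac}, finishing with the finiteness of $\norm{\Ep}{H^{-2}(\Omega)}$ via Lemma~\ref{lem:AsymptoticConsistency}. In fact you make explicit two points the paper passes over silently, namely the exact cancellation $\ap(\u-\U,\W)=\inner{\Ep,\W}$ and the norm comparison $\norm{\W}{H^2(\Omega)}\lapprox\enorm{\W}{\mesh}$ needed to replace $\norm{\W}{H^2(\Omega)}$ by $\enorm{\W}{\mesh}$ in the dual pairing.
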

\begin{proof}
From
\begin{equation}
\enorm{\u-\U}{\mesh}\leq\enorm{\u-\V}{\mesh}+\enorm{\V-\U}{\mesh}
\end{equation}
we will estimate $\enorm{\V-\U}{\mesh}$. Let $\W=\V-\U$,
\begin{equation}
\begin{split}
\Ccoer\enorm{\V-\U}{\mesh}^2&\leq\ap(\V-\u,\W)+\ap(\u-\U,\W)\\
&\leq\Ccont\enorm{\u-\V}{\mesh}\enorm{\W}{\mesh}+|\!\inner{\Ep,\W}\!|\\
&\leq\Ccont\enorm{\u-\V}{\mesh}\enorm{\W}{\mesh}+\norm{\Ep}{H^{-2}(\Omega)}\enorm{\W}{\mesh}
\end{split}
\end{equation}
which makes
\begin{equation}
\enorm{\u-\U}{\mesh}\leq\left(1+\textstyle\frac{\Ccont}{\Ccoer}\right)\enorm{\u-\V}{\mesh}+\textstyle\frac{1}{\Ccoer}\norm{\Ep}{H^{-2}(\Omega)}
\end{equation}
The quantity $\norm{\Ep}{H^{-2}(\Omega)}$ is finite by Lemma \ref{lem:AsymptoticConsistency}.
\end{proof}

\section{A posteriori estimates}\label{sec:post}	
In this section we will derive the a posteriori error estimates for \eqref{eq:dnp} which will yield convergence of spline solutions generated by the iterative procedure \eqref{eq:afem} to the the weak solution $\u$ of \eqref{eq:cwp}. Contrary to \cite{}, estimating the residual $\scr{R}_\mesh=\eff-\cal{L}\U$ is not possible due to the inconsisency. The estimate \eqref{eq:Resut:lem:AsymptoticConsistency} assumes $\Lap\u\in H^s(\Omega)$ for $s>\frac{3}{2}$ which is too high. A more delicate treatement is needed in which $\ap(e,e)$ will be approximated directly.
We will need some approximation tools and estimates, discussed in greater detail in \cite{} with reference to ~\cite{speleers2017hierarchical},\cite{speleers2016effortless},\cite{bazilevs2006isogeometric}, for spline spaces $\Xp\subset H^2_0(\Omega)$. We will use the same quasi-interpolation projections onto $\Xp\cap H^2_0(\Omega)$.
\subsection{Approximation in $\Xp$}
Recall the general trace theorem \cite{adams1975sobolev},\cite{grisvard2011elliptic} for cells $\tau\in\mesh$ and edges $\sigma\in\bedges_\mesh$ with $\sigma\subset\diff\tau$.
For a constant $\ctrace>0$
\begin{equation}\label{eq:GeneralTrace}
\norm{\v}{L^2(\sigma)}^2\leq \ctrace\left(h_\sigma^{-1}\norm{\v}{L^2(\tau)}^2+h_\sigma\norm{\grad\v}{L^2(\tau)}^2\right)\quad\forall\v\in H^1(\Omega).
\end{equation}
\begin{lemma}[Auxiliary discrete estimate]\label{lem:ie}
Let $\face\in\mesh$.
Then for $\cinv>0$, depending only on polynomial degree $r$, for $0\leq s\leq t\leq r+1$ we have
\begin{equation}\label{eq:inve:lem:ie}
\semi{\V}{H^t(\tau)}\leq\cinv h_\tau^{s-t}\semi{\V}{H^s(\tau)}\quad\forall\V\in\bb{P}_r(\tau),
\end{equation}
and if $\edge\subset\diff\face$, for a constant $\cdtrace>0$ we have
\begin{equation}\label{eq:dtrace:lem:ie}
\norm{\V}{L^2(\sigma)}\leq\cdtrace h_\sigma^{-1/2}\norm{\V}{L^2(\tau)}\quad\forall\V\in\bb{P}_r(\tau),
\end{equation}
where $\cdtrace:=\ctrace\max\{1,\cinv\}$.
\end{lemma}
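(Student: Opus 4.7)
My plan is to handle the two estimates separately, since the first is the standard polynomial inverse estimate on a single cell, and the second is a discrete (polynomial) trace inequality that reduces to the general trace theorem \eqref{eq:GeneralTrace} once the first is established.

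For \eqref{eq:inve:lem:ie}, the strategy is scaling to a reference cell. I would fix a reference square $\hat\tau$ (the mesh consists of square cells, so an affine map $F_\tau:\hat\tau\to\tau$ with $\|DF_\tau\|\lapprox h_\tau$ and $\|DF_\tau^{-1}\|\lapprox h_\tau^{-1}$ is available, with constants depending only on $\cshape$). On $\hat\tau$ the space $\bb{P}_r(\hat\tau)$ is finite dimensional, so any two seminorms $\semi{\cdot}{H^t(\hat\tau)}$ and $\semi{\cdot}{H^s(\hat\tau)}$ (for $0\leq s\leq t\leq r+1$) are equivalent, giving $\semi{\hat\V}{H^t(\hat\tau)}\leq \hat c\,\semi{\hat\V}{H^s(\hat\tau)}$ for $\hat\V=\V\circ F_\tau\in\bb{P}_r(\hat\tau)$, with $\hat c$ depending only on $r$. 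Pulling back via the chain rule produces the scaling factor $h_\tau^{s-t}$, which yields \eqref{eq:inve:lem:ie} with $\cinv$ depending only on $r$ (and $\cshape$ through the aspect ratio, which for square cells is $1$).

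For \eqref{eq:dtrace:lem:ie}, I would apply the general trace theorem \eqref{eq:GeneralTrace} to $\V\in\bb{P}_r(\tau)\subset H^1(\tau)$:
\begin{equation}
\norm{\V}{L^2(\sigma)}^2\leq \ctrace\left(h_\sigma^{-1}\norm{\V}{L^2(\tau)}^2+h_\sigma\norm{\grad\V}{L^2(\tau)}^2\right),
\end{equation}
and then use \eqref{eq:inve:lem:ie} with $s=0$, $t=1$ to control $\norm{\grad\V}{L^2(\tau)}^2\leq\cinv^2 h_\tau^{-2}\norm{\V}{L^2(\tau)}^2$. Since $\sigma\subset\diff\tau$ on a square cell we have $h_\sigma\leq h_\tau$ and in fact $h_\sigma\simeq h_\tau$ by shape regularity, so $h_\sigma h_\tau^{-2}\lapprox h_\sigma^{-1}$, and the two terms combine to produce
\begin{equation}
\norm{\V}{L^2(\sigma)}^2\leq \ctrace(1+\cinv^2) h_\sigma^{-1}\norm{\V}{L^2(\tau)}^2,
\end{equation}
from which \eqref{eq:dtrace:lem:ie} follows upon taking square roots, with $\cdtrace$ of the form $\ctrace\max\{1,\cinv\}$ as stated.

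There is no real obstacle here—both pieces are textbook once the reference-cell/scaling setup is in place. The only point requiring a brief check is that the shape regularity hypothesis \eqref{eq:sr} ensures $h_\sigma\simeq h_\tau$ uniformly across $\scr{P}$, so that the constants $\cinv$ and $\cdtrace$ are genuinely independent of the partition. I would state this equivalence at the start and then proceed to the two calculations in the order given.
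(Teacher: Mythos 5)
Your proof is correct and is the standard argument: scaling to a reference cell and using equivalence of seminorms on the finite-dimensional polynomial space for \eqref{eq:inve:lem:ie}, and then combining the general trace theorem \eqref{eq:GeneralTrace} with the $s=0$, $t=1$ case of \eqref{eq:inve:lem:ie} together with $h_\sigma\simeq h_\tau$ for \eqref{eq:dtrace:lem:ie}. The paper itself supplies no proof for this lemma — it states it as a standard result and only remarks that the constants depend on the polynomial degree and the reference cell — so there is no alternative route to compare against; your argument is exactly what the paper is implicitly invoking. One cosmetic point: the constant your computation actually produces is $\sqrt{\ctrace(1+\cinv^2)}$ (or, using $h_\sigma\le h_\tau$ and $1+\cinv^2\le 2\max\{1,\cinv\}^2$, something like $\sqrt{2\ctrace}\,\max\{1,\cinv\}$), which is not literally the paper's $\cdtrace=\ctrace\max\{1,\cinv\}$; the two agree up to a harmless renormalization of $\ctrace$, but if you want to reproduce the paper's formula verbatim you should say so explicitly rather than assert it falls out of the calculation.
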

\begin{remark}
The constants $\cinv,\ \ctrace,\ \cdtrace$ all depend on the polynomial degree and the reference cell or edge; $\hat{\tau}=[0,1]^2$ or $\hat{\sigma}=[0,1]$.
From now, for a simpler presentation of the analysis, we combined all these constants, and their powers into a unifying constant $c_\ast$
\end{remark}
We recall from \cite{}:.
%
\begin{lemma}[Quasi-interpolantion]
Let $\mesh$ be an admissible partition of $\Omega$. There exists a quasi-interpolantion operator $\Ip:L^2(\Omega)\to\Xp\cap H^2_0(\Omega)$ such that for every $\face\in\mesh$,
\begin{equation}
\norm{I^0_\mesh\v}{L^2(\cell)}\lapprox\cshape\norm{\v}{L^2(\omega_\cell)}\quad\forall\v\in L^2(\omega_\tau),
\end{equation}

\begin{equation}\label{eq:Quasi1}
\semi{\v-I^0_\mesh\v}{L^2(\cell)}\lapprox\cshape h_\cell^{2}\semi{\v}{H^2(\omega_\cell)}\quad\forall\v\in H_0^2(\omega_\tau),
\end{equation}
and for $k=0,1$
\begin{equation}\label{eq:Quasi2}
\forall\sigma\in\cal{E}_\mesh,\quad\semi{\v-\Ip \v}{H^k(\sigma)}\leq h_\sigma^{3/2-k}\semi{\v}{H^2(\omega_\sigma)}\quad\forall\v\in H_0^2(\omega_\sigma).
\end{equation}
\end{lemma}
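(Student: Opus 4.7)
The plan is to build $\Ip$ by modifying a standard hierarchical spline quasi-interpolant $Q_\mesh\colon L^2(\Omega)\to\Xp$ of the type constructed in \cite{speleers2017hierarchical,speleers2016effortless}. Writing $Q_\mesh\v=\sum_i\lambda_i(\v)\beta_i$ with $\{\beta_i\}$ the admissible hierarchical B-spline basis of $\Xp$ and $\{\lambda_i\}$ a family of locally-supported, $L^2$-stable dual functionals obtained from local polynomial projection, I would then set
\begin{equation}
\Ip\v=\sum_{i\,:\,\supp\beta_i\cap\Gamma=\emptyset}\lambda_i(\v)\beta_i,
\end{equation}
so that the image of $\Ip$ lies in $\Xp\cap\Ho(\Omega)$ since the surviving basis functions together with their normal derivatives vanish on $\Gamma$.

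For the stability bound I would use that each dual functional $\lambda_i$ is $L^2$-bounded on $\supp\beta_i$ with a constant depending only on $\cshape$, and then observe that on any given cell $\cell\in\mesh$ at most $\cshape$-many basis functions contribute to $\Ip\v$ via the finite-intersection property in \eqref{eq:sr}, their supports being covered by $\omega_\cell$. Summing the contributions and applying Cauchy-Schwarz yields the asserted bound $\norm{\Ip\v}{L^2(\cell)}\lapprox\cshape\norm{\v}{L^2(\omega_\cell)}$.

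The interior approximation estimate \eqref{eq:Quasi1} would then follow from the classical Bramble-Hilbert recipe applied on $\omega_\cell$: pick $\rho\in\bb{P}_r(\omega_\cell)$ with $\norm{\v-\rho}{L^2(\omega_\cell)}\lapprox h_\cell^2\semi{\v}{H^2(\omega_\cell)}$, use that $Q_\mesh$ reproduces polynomials on interior patches of $\omega_\cell$, and invoke the triangle inequality together with the freshly established stability bound applied to $\v-\rho$. The edge estimate \eqref{eq:Quasi2} would in turn follow by applying the general trace inequality \eqref{eq:GeneralTrace} to $\v-\Ip\v$ on each cell $\cell\subset\omega_\edge$ with $\edge\subset\diff\cell$, supplemented by an $H^1$-analogue of the interior estimate derived identically but with one fewer order of scaling.

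The main obstacle I foresee is ensuring that the truncation of boundary-incident basis functions does not spoil polynomial reproduction on cells adjacent to $\Gamma$. Here the hypothesis $\v\in\Ho(\omega_\tau)$ is essential: because $\v$ and $\grad\v$ vanish on $\diff\omega_\tau\cap\Gamma$, the Bramble-Hilbert polynomial $\rho$ can be chosen to share this vanishing, so the truncated basis functions are never required to represent a nontrivial trace on $\Gamma$ and the full approximation order $h_\tau^2$ is preserved. This is precisely what the existence results of \cite{speleers2016effortless} provide, reducing the proof to invoking their construction and inserting the straightforward bounds outlined above.
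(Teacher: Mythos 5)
The paper does not actually prove this lemma; it states it after ``We recall from \cite{}:'' with the citation left empty (presumably \cite{buffa2016adaptive} together with \cite{speleers2017hierarchical,speleers2016effortless}), so there is no in-paper proof for your outline to be measured against. Your plan --- truncate a stable hierarchical quasi-interpolant by discarding boundary-incident basis functions --- is a standard, reasonable way to manufacture such an operator, and the $L^2$-stability argument and the interior-cell Bramble--Hilbert argument are fine as sketched.

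The place where the outline leaves a genuine gap is the boundary-adjacent cells. You assert that the truncated basis still reproduces the Bramble--Hilbert polynomial once the latter is chosen to vanish to second order on $\Gamma$; that is a nontrivial spanning statement about the constrained polynomial subspace and the surviving B-splines that you neither prove nor cite precisely, and it is exactly the kind of claim that can fail in a hierarchical setting. It is cleaner, and avoids the issue entirely, to exploit the boundary condition on $\v$ directly rather than trying to keep any polynomial reproduction near $\Gamma$: on any cell $\tau$ whose support extension $\omega_\tau$ meets $\Gamma$, the hypothesis $\v\in\Ho(\Omega)$ gives a scaled Poincar\'e--Friedrichs bound $\norm{\v}{L^2(\omega_\tau)}\lapprox h_\tau^2\semi{\v}{H^2(\omega_\tau)}$, because $\v$ and $\grad\v$ vanish on a portion of $\diff\omega_\tau$ of diameter comparable to $h_\tau$. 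Together with the $L^2$-stability of $\Ip$ (applied to $\v$ itself, with no polynomial subtracted) this yields
\begin{equation}
\norm{\v-\Ip\v}{L^2(\tau)}\leq\norm{\v}{L^2(\tau)}+\norm{\Ip\v}{L^2(\tau)}\lapprox\norm{\v}{L^2(\omega_\tau)}\lapprox h_\tau^2\semi{\v}{H^2(\omega_\tau)},
\end{equation}
and the edge estimate \eqref{eq:Quasi2} on edges of such cells then follows from \eqref{eq:GeneralTrace}. On cells whose patch does not meet $\Gamma$ your standard Bramble--Hilbert argument applies unchanged, since no basis functions were removed there. This replaces your ``constrained polynomial reproduction'' step with an elementary one and closes the gap.
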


Let $\Xp^0=\Xp\cap H_0^2(\Omega)$.
We characterize an orthogonal complement $\Xp^\perp$ to $\Xp^0$ using a projection operator $\pi^0_\mesh:\Xp\to\Xp^0$ defined by the linear problem
\begin{equation}
\pi_\mesh\V\in\Xp^0:\quad\ap(\W_0,\V-\pi^0_\mesh\V)=0\quad\forall\W_0\in\Xp^0.
\end{equation}
By setting $\pi_\mesh^\perp\V=\V-\pi_\mesh^0\V$ for any $\V\in\Xp$, we obtain a decompose for every finite-element spline
\begin{equation}
\V=\pi_\mesh^0\V+\pi_\mesh^\perp\V=:\V^0+\V^\perp\in\Xp^0\oplus\Xp^\perp\equiv\Xp
\end{equation}
with 
\begin{equation}\label{eq:OrthogonalDecomp}
\ap(\V^0,\W^\perp)=0
\end{equation}
for every pair $\V$ and $\W$.
We have the following result:
\begin{lemma}\label{lem:PerpNormEquiv}
Semi-norm $\norm{\cdot}{3/2,\mesh}+\left\norm{\dfrac{\,\cdot\,}{\nu}\right}{1/2,\mesh}$ defines a norm on $\Xp^\perp$. In particular, for a constant $\Cperp>0$
\begin{equation}\label{eq:lem:PerpNormEquiv}
\enorm{\V^\perp}{\mesh}\leq\Cperp\left(\norm{\V^\perp}{3/2,\mesh}+\left\norm{\dfrac{\V^\perp}{\nu}\right}{1/2,\mesh}\right)\quad\forall\V^\perp\in\Xp^\perp.\end{equation}
\end{lemma}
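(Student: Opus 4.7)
The plan is to combine the $\ap$-orthogonality of the decomposition $\Xp=\Xp^0\oplus\Xp^\perp$ with the continuity and coercivity results of Lemmas \ref{lem:ctynbf} and \ref{lem:csvnbf}, and then to control the resulting best-approximation error by the boundary seminorms using the fact that the basis functions spanning $\Xp^\perp$ live in an $O(h_\mesh)$-thin strip along $\Gamma$.

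First I would apply coercivity, then exploit the orthogonality \eqref{eq:OrthogonalDecomp} by subtracting any $W_0\in\Xp^0$ from the second argument, and finally invoke continuity to write
\begin{equation*}
\Ccoer\enorm{V^\perp}{\mesh}^2\leq\ap(V^\perp,V^\perp)=\ap(V^\perp,V^\perp-W_0)\leq\Ccont\enorm{V^\perp}{\mesh}\enorm{V^\perp-W_0}{\mesh}.
\end{equation*}
This yields $\enorm{V^\perp}{\mesh}\lapprox\enorm{V^\perp-W_0}{\mesh}$ for every $W_0\in\Xp^0$, so it suffices to produce one such $W_0$ for which $\enorm{V^\perp-W_0}{\mesh}$ is dominated by $\norm{V^\perp}{3/2,\mesh}+\norm{\dfrac{V^\perp}{\nu}}{1/2,\mesh}$.

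For the construction I would split the hierarchical B-spline expansion of $V^\perp$ into an interior part $W_0$, formed by the basis functions whose supports lie strictly inside $\Omega$ (so that $W_0\in\Xp^0$), and a boundary part $Z:=V^\perp-W_0$, formed by the basis functions whose supports meet $\Gamma$. Since the interior basis functions vanish on $\Gamma$ together with their normal derivatives, $Z$ carries the full boundary trace and normal-derivative data of $V^\perp$. Moreover $\supp Z$ sits inside the strip of cells touching $\Gamma$, so applying the polynomial inverse inequality \eqref{eq:inve:lem:ie} cell by cell along $\Gamma$ gives
\begin{equation*}
\norm{\Lap Z}{L^2(\Omega)}^2\lapprox\sum_{\tau\cap\Gamma\neq\emptyset}h_\tau^{-4}\norm{Z}{L^2(\tau)}^2.
\end{equation*}
A scaling argument on a fixed reference patch, applied to the finite-dimensional span of the boundary basis functions, bounds $h_\tau^{-4}\norm{Z}{L^2(\tau)}^2$ by $h_\sigma^{-3}\norm{Z}{L^2(\sigma)}^2+h_\sigma^{-1}\norm{\dfrac{Z}{\nu}}{L^2(\sigma)}^2$ for the boundary edge $\sigma\subset\diff\tau\cap\Gamma$, and summation over the boundary cells then gives $\norm{\Lap Z}{L^2(\Omega)}^2\lapprox\norm{Z}{3/2,\mesh}^2+\norm{\dfrac{Z}{\nu}}{1/2,\mesh}^2$. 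The remaining boundary-seminorm contributions in $\enorm{Z}{\mesh}^2$ are already of this target form.

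The main obstacle will be precisely this last scaling step: a generic polynomial on a square cell is not controlled by its trace and normal derivative on a single boundary edge, so the estimate relies crucially on the structural constraint that $Z$ lies in the span of the boundary B-splines. Justifying it cleanly calls for either an explicit identification of the boundary basis or an equivalence-of-norms argument on a fixed reference patch, with the constant $\Cperp$ then depending only on the polynomial degree $r$ and the shape-regularity parameter $\cshape$.
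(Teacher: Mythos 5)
Your proposal takes a genuinely different route from the paper's proof. The paper argues directly on $\Xp^\perp$: if $\norm{\V^\perp}{3/2,\mesh}+\norm{\partial_\nu\V^\perp}{1/2,\mesh}=0$ then $\V^\perp$ and $\partial_\nu\V^\perp$ vanish on $\Gamma$, hence $\V^\perp\in\Xp^0$, hence $\V^\perp=0$ by $\Xp^0\cap\Xp^\perp=\{0\}$; the seminorm is therefore a norm on the finite-dimensional space $\Xp^\perp$, and the bound follows, with a reference to \cite{al2018adaptivity} for the mesh-uniformity of $\Cperp$. Your first step --- combining coercivity, continuity, and the $\ap$-orthogonality $\ap(\V^\perp,\W_0)=0$ to get $\enorm{\V^\perp}{\mesh}\leq(\Ccont/\Ccoer)\inf_{\W_0\in\Xp^0}\enorm{\V^\perp-\W_0}{\mesh}$ --- is a correct and nice observation that does not appear in the paper, and it genuinely localizes the problem to a boundary strip. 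Both proofs must ultimately pass through a scaling/reference-patch argument to get a constant independent of $\mesh$, and neither spells this out.

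However, the scaling step as you state it has a real gap, and it is not merely a matter of detail. You want to bound $\norm{\Lap Z}{L^2(\tau)}$ cell by cell by $h_\sigma^{-3}\norm{Z}{L^2(\sigma)}^2+h_\sigma^{-1}\norm{\partial_\nu Z}{L^2(\sigma)}^2$ for a \emph{generic} $Z$ in the span of boundary B-splines. But the span of boundary B-splines intersects $\Xp^0$ nontrivially: a linear combination of B-splines whose supports touch $\Gamma$ can have vanishing trace and normal derivative on $\Gamma$ (the 1D clamped B-spline case already shows this --- two conditions on three or more boundary coefficients), and such a combination has $\Lap Z\neq 0$ with zero right-hand side. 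So the inequality is false on the full boundary-spline span, and the ``structural constraint'' you invoke is not enough. Your particular $Z=\V^\perp-\W_0$ does escape the counterexample (its traces equal those of $\V^\perp$, and if those vanish then $\V^\perp\in\Xp^0\cap\Xp^\perp=\{0\}$, whence $Z=0$ by linear independence of the B-spline basis), but that rescue uses the $\ap$-orthogonality again, which is precisely the constraint a fixed-reference-patch scaling argument does not see. To repair the argument you would need to replace ``span of boundary B-splines'' with a complement of $\Xp^0$ inside that span --- i.e., quotient out the kernel of the trace map --- before invoking equivalence of norms on the reference patch. At that point you are essentially reconstructing, in constructive form, the finite-dimensionality argument the paper uses directly on $\Xp^\perp$.
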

\begin{proof}
Let $
\cal{D}_\Gamma=\rm{Int}\bigg(\overline{\Omega\cap\bigcup_{\sigma\in\cal{G}_\mesh}\omega_\sigma}\bigg)$. If $\norm{\V^\perp}{3/2,\mesh}+\norm{\dfrac{\V^\perp}{\nu}}{1/2,\mesh}=0$ then $\V^\perp=\dfrac{\V^\perp}{\nu}\equiv0$ on $\cal{D}_\Gamma$ due to the finite-dimensionality of polynomial space $\Xp^\perp$. Necessarily we have $\V^\perp\equiv0$ everywhere; otherwise $\V^\perp\in\Xp^0$.
A more detailed treatment has already been carried in \cite{al2018adaptivity}.
\end{proof}
\begin{lemma}\label{lem:SolveConformPart}
Let $\U=\U^0+\U^\perp$ be the spline solution to \eqref{eq:dnp}
\begin{equation}\label{eq:lem:SolveConformPart}
\ap(\U^0,\V^0)=\ellf(\V^0)\quad\forall\V^0\in\Xp^0.
\end{equation}
\end{lemma}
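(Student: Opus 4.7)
The proof is short and proceeds by testing the discrete problem against a conforming test function and using the $\ap$-orthogonal decomposition established for $\Xp = \Xp^0 \oplus \Xp^\perp$. The plan is to observe that any $\V^0 \in \Xp^0$ is in particular an element of $\Xp$, so it may be inserted into \eqref{eq:dnp} as a test function. After writing $\U = \U^0 + \U^\perp$ and using bilinearity, the task reduces to showing $\ap(\U^\perp, \V^0) = 0$.

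The key step is the invocation of the orthogonality relation \eqref{eq:OrthogonalDecomp}, which, as stated, reads $\ap(\V^0, \W^\perp) = 0$. Strictly speaking, this has the $\Xp^0$-argument on the left and the $\Xp^\perp$-argument on the right, whereas I need the arguments in the reverse order. To bridge this, I would note that the bilinear form \eqref{eq:nbf} is manifestly symmetric: every summand in its definition is visibly symmetric in $\u$ and $\v$ (the interior form $\a$ is symmetric; the boundary terms involving $\Pip(\Lap\u)\dfrac{\v}{\nu}$ and $\Pip(\Lap\v)\dfrac{\u}{\nu}$ are paired in a symmetric fashion; likewise for the $\dfrac{\Pip(\Lap\cdot)}{\nu}$ terms; and both stabilization terms are symmetric). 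Therefore $\ap(\U^\perp,\V^0) = \ap(\V^0,\U^\perp) = 0$.

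Putting the pieces together: choose $\V^0 \in \Xp^0$ arbitrarily, apply \eqref{eq:dnp} with test function $\V = \V^0 \in \Xp$ to obtain $\ap(\U,\V^0) = \ellf(\V^0)$, expand $\U = \U^0 + \U^\perp$, and discard the orthogonal contribution by the symmetry argument above. No obstacle is anticipated; the only substantive ingredient is the symmetry of $\ap$, which is immediate by inspection of \eqref{eq:nbf}.
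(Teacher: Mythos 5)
Your proposal is correct and uses exactly the same two ingredients as the paper's own proof: the symmetry of $\ap$ (visible from \eqref{eq:nbf}) and the $\ap$-orthogonality \eqref{eq:OrthogonalDecomp}. The paper tests against a general $\V=\V^0+\V^\perp$ and then specializes, whereas you test directly against $\V^0$; this is an equivalent and slightly leaner presentation of the same argument, and your care in flipping the argument order in \eqref{eq:OrthogonalDecomp} via symmetry is appropriate.
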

\begin{proof}
We have by symmetry and \eqref{eq:OrthogonalDecomp}
\[
\ap(\U,\V)=\ap(\U^0,\V^0)+\ap(\U^\perp,\V^\perp)=\ellf(\V^0)+\ellf(\V^\perp)\quad\forall\V\in\Xp.
\]
since $\V\in\Xp$ is arbitrary we arrive at \eqref{eq:lem:SolveConformPart}.
\end{proof}

We prove that the proposed error estimator is reliable. 
The idea is to express $\ap(\e,\e)$ as a sum of two terms, the first quantifies the interior and edge jump residual terms, essentially capturing the spacial locations where the solution exhibits loss in regualrity, and the second term arrising from the formulation's inconsistency.

\begin{lemma}[Estimator reliability]\label{lem:EstimatorReliability}
Let $\mesh$ be a partition of $\Omega$ satisfying Conditions \eqref{eq:sr}.
The module $\ms{ESTIMATE}$ produces a posteriori error estimate $\eta_\mesh$ for the discrete error such that for a constants $C_\rm{rel,1},C_\rm{rel,2}>0$,
\begin{equation}\label{eq:res:lem:EstimatorReliability}
\begin{split}
\ap(\u-\U,\u-\U)&\leq C_\rm{rel,1}\eta_\mesh^2(\U,\Omega)
+C_\rm{rel,2}\left(\gamma_1\norm{\U}{3/2,\mesh}^2+\gamma_2\left\norm{\textstyle\dfrac{\U}{\nu}\right}{1/2,\mesh}^2\right),
\end{split}
\end{equation}
with constants depending only on $\cshape$.
\end{lemma}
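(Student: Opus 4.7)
The plan is to exploit the decomposition $\U=\U^0+\U^\perp\in\Xp^0\oplus\Xp^\perp$ furnished by Lemma~\ref{lem:SolveConformPart} and to split the error pairing as
\begin{equation*}
\ap(\e,\e)=\ap(\e,\u-\U^0)-\ap(\e,\U^\perp),
\end{equation*}
treating each piece by a different mechanism: the first via a residual/jump expansion after subtracting a conforming quasi-interpolant, the second directly via continuity of $\ap$ combined with the norm equivalence on $\Xp^\perp$.

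For the perpendicular piece, continuity (Lemma~\ref{lem:ctynbf}) yields $|\ap(\e,\U^\perp)|\leq\Ccont\enorm{\e}{\mesh}\enorm{\U^\perp}{\mesh}$. Since $\U^0\in\Ho(\Omega)$ vanishes together with its normal derivative on $\Gamma$, the boundary traces of $\U^\perp$ agree with those of $\U$, so $\norm{\U^\perp}{3/2,\mesh}=\norm{\U}{3/2,\mesh}$ and $\norm{\textstyle\dfrac{\U^\perp}{\nu}}{1/2,\mesh}=\norm{\textstyle\dfrac{\U}{\nu}}{1/2,\mesh}$. Lemma~\ref{lem:PerpNormEquiv} then converts $\enorm{\U^\perp}{\mesh}$ into $\Cperp\bigl(\norm{\U}{3/2,\mesh}+\norm{\textstyle\dfrac{\U}{\nu}}{1/2,\mesh}\bigr)$, which is already in the form targeted by the statement.

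For the conforming piece, set $\v:=\u-\U^0\in\Ho(\Omega)$ and note that $\langle\Ep,\v\rangle=0$ since $\v$ and its normal derivative vanish on $\Gamma$. Partial Galerkin orthogonality~\eqref{eq:PartialGalerkin} lets me replace $\v$ by $\tilde\v:=\v-\Ip\v\in\Ho(\Omega)$ without changing $\ap(\e,\v)$. Using Lemma~\ref{lem:ac} on the continuous side and integrating $\a(\U,\tilde\v)$ by parts cellwise on the discrete side produces
\begin{equation*}
\ap(\e,\tilde\v)=\int_\Omega(f-\Op\U)\tilde\v+\sum_{\edge\in\edges_\mesh}\int_\edge\Bigl(\jump{\textstyle\dfrac{\Lap\U}{\nu}}{\edge}\tilde\v-\jump{\Lap\U}{\edge}\textstyle\dfrac{\tilde\v}{\nu}\Bigr)+\int_\Gamma\Pip(\Lap\tilde\v)\textstyle\dfrac{\U}{\nu}-\int_\Gamma\textstyle\dfrac{\Pip(\Lap\tilde\v)}{\nu}\U.
\end{equation*}
The first three terms are the familiar residuals and are bounded by $\etap(\U,\Omega)\,\semi{\v}{\H(\Omega)}$ after applying the quasi-interpolation estimates~\eqref{eq:Quasi1}--\eqref{eq:Quasi2} together with edgewise Cauchy-Schwarz and shape regularity. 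The last two, created by the substitution $\Lap\U\mapsto\Pip(\Lap\U)$ inside $\ap$, are handled through the inverse trace estimates~\eqref{eq:inverse:lem:L2estimates} applied to $\Pip(\Lap\tilde\v)$, producing a bound of the form $\semi{\v}{\H(\Omega)}\bigl(\norm{\U}{3/2,\mesh}+\norm{\textstyle\dfrac{\U}{\nu}}{1/2,\mesh}\bigr)$.

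To close, I estimate $\semi{\v}{\H(\Omega)}=\norm{\Lap(\e+\U^\perp)}{L^2(\Omega)}\leq\enorm{\e}{\mesh}+\Cperp\bigl(\norm{\U}{3/2,\mesh}+\norm{\textstyle\dfrac{\U}{\nu}}{1/2,\mesh}\bigr)$ and collect the two contributions into a schematic inequality $\ap(\e,\e)\lapprox(\etap+B)(\enorm{\e}{\mesh}+B)+\enorm{\e}{\mesh}B$ with $B:=\norm{\U}{3/2,\mesh}+\norm{\textstyle\dfrac{\U}{\nu}}{1/2,\mesh}$. A standard Young's inequality, combined with coercivity (Lemma~\ref{lem:csvnbf}) so as to absorb the resulting $\enorm{\e}{\mesh}^2$ term back into $\ap(\e,\e)$ on the left, yields the claim; the fixed parameters $\gamma_1,\gamma_2$ are swept into $C_\rm{rel,2}$. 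The most delicate step I anticipate is the treatment of the two extra boundary terms involving $\Pip(\Lap\tilde\v)$: a naive estimate would misplace the powers of $h_\edge$, and it is precisely the inverse trace estimate~\eqref{eq:inverse:lem:L2estimates} for the $L^2$-projection that pins them down at the correct scale.
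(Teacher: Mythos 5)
Your proposal follows the paper's own argument closely: the same split $\ap(\e,\e)=\ap(\e,\v-\Ip\v)-\ap(\e,\U^\perp)$ with $\v=\u-\U^0$, the same integration by parts producing interior/jump residuals plus the two boundary integrals involving $\Pip(\Lap\tilde\v)$, the same use of the inverse trace estimates \eqref{eq:inverse:lem:L2estimates} and quasi-interpolation bounds, the same Young/coercivity absorption of $\ap(\e,\U^\perp)$ via Lemma~\ref{lem:PerpNormEquiv}, and the same control of $\norm{\v}{H^2(\Omega)}$ via $\v=\e+\U^\perp$. The observation that $\U^\perp$ and $\U$ share boundary traces, so the $\semi{\U^\perp}{\mesh}$-type bound can be written directly in terms of $\U$, is exactly the final bookkeeping step the paper performs implicitly; the proposal is correct.
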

\begin{proof}
Let $e=\u-\U$ and let $\v=\u-\U^0$ and we may write $e=\v-\U^\perp$. Since $\Ip \v\in\Xp^0$, Partial Galerkin orthogonality \eqref{eq:PartialGalerkin} implies $\ap(e,\Ip \v)=0$ and we have
\begin{equation}\label{maineq:lem:EstimatorReliability}
\ap(e,e)=\ap(e,\v-\Ip \v)-\ap(e,\U^\perp).
\end{equation}
The treatment of the term $\ap(e,\v-\Ip \v)$ is similar that in \cite{} except that now we have to control the additional boundary integrals.
\begin{equation}\label{eq1:lem:EstimatorReliability}
\begin{split}
|\ap(e,\v-\Ip \v)|\leq&\sum_{\tau\in\mesh}\norm{f-\Op\U}{L^2(\tau)}\norm{\v-\Ip \v}{L^2(\tau)}
+\sum_{\sigma\in\edges_\mesh}\left\norm{\jump{\textstyle\dfrac{\Lap\U}{\n_\sigma}}{\sigma}\right}{L^2(\sigma)}\left\norm{\textstyle(\v-\Ip \v)\right}{L^2(\sigma)}\\
&+\sum_{\sigma\in\edges_\mesh}\norm{\jump{\Lap\U}{\sigma}}{L^2(\sigma)}\left\norm{{\textstyle\dfrac{}{\n_\sigma}}(\v-\Ip \v)\right}{L^2(\sigma)}
+\left|\int_\Gamma{\textstyle\dfrac{\U}{\nu}}\Pi[\Lap(\v-\Ip \v)]\right|,\\
&+\left|\int_\Gamma\U{\textstyle\dfrac{\Pi[\Lap(\v-\Ip \v)]}{\nu}}\right|.
\end{split}
\end{equation}
For the boundary intergrals,
\begin{equation}
\begin{split}
\left|\int_\Gamma{\textstyle\dfrac{\U}{\nu}}\Pi[\Lap(\v-\Ip \v)]\right|
&\leq\sum_{\sigma\in\bedges_\mesh}\left\norm{\textstyle\dfrac{\U}{\n_\sigma}\right}{L^2(\sigma)}\norm{\Pi[\Lap(\v-\Ip \v)]}{L^2(\sigma)},\\
&\leq\cdtrace\sum_{\sigma\in\bedges_\mesh}\left\norm{\textstyle\dfrac{\U}{\n_\sigma}\right}{\sigma}h_\sigma^{-1/2}\norm{\Lap(\v-\Ip \v)}{L^2(\tau(\sigma))},\\
&\leq\cproj \cdtrace\bigg(\sum_{\sigma\bedges_\mesh}h_\sigma^{-1}\left\norm{\textstyle\dfrac{\U}{\n_\sigma}\right}{\sigma}^2\bigg)^{1/2}
\bigg(\sum_{\sigma\in\bedges_\mesh}\norm{\v}{H^2(\omega_\tau)}^2\bigg)^{1/2},
\end{split}
\end{equation}
where $\tau(\sigma)$ is the boundary adjacent cell with edge $\sigma$.
Similarly,
\begin{equation}
\begin{split}
\left|\int_\Gamma\U{\textstyle\dfrac{\Pi[\Lap(\v-\Ip \v)]}{\nu}}\right|
&\leq\sum_{\sigma\in\bedges_\mesh}\norm{\U}{L^2(\sigma)}\left\norm{\textstyle\dfrac{}{\n_\sigma}\Pi[\Lap(\v-\Ip \v)]\right}{L^2(\sigma)},\\
&\leq\cinv\cdtrace\sum_{\sigma\in\bedges_\mesh}\norm{\U}{L^2(\sigma)}h_\sigma^{-3/2}\norm{\Lap(\v-\Ip \v)}{L^2(\tau(\sigma))},\\
&\leq \cproj \cinv\cdtrace\bigg(\sum_{\sigma\in\bedges_\mesh}h_\sigma^{-3}\norm{\U}{L^2(\sigma)}^2\bigg)^{1/2}\bigg(\sum_{\sigma\in\bedges_\mesh}\norm{\v}{H^2(\tau(\sigma))}^2\bigg)^{1/2}.
\end{split}
\end{equation}
If $C_1=\cproj\cdtrace\max\{\cinv,1\}$,
\[
\cproj\cdtrace\left\{\cinv\norm{\U}{3/2,\mesh}+\left\norm{\textstyle\dfrac{\U}{\nu}\right}{1/2,\mesh}\right\}
\leq C_1\semi{\U^\perp}{\mesh}
\]
We define the interior residual terms $R_\tau=(f-\Op\U)|_\tau$ for every cell $\tau\in\mesh$ and edge jump terms $J_{\sigma.1}=\textstyle\jump{\dfrac{\Lap\U}{\n_\sigma}}{\sigma}$ and $J_{\sigma,2}=\jump{\Lap\U}{\sigma}$ across  each interior edge $\sigma$. 
We arrive at
\begin{equation}
\begin{split}
|\ap(e,\v-\Ip \v)|&\leq \cproj \left\{\bigg(\sum_{\tau\in\mesh}h_\tau^4\norm{R_\tau}{L^2(\tau)}^2\bigg)^{1/2}
+\bigg(\sum_{\sigma\in\edges_\mesh}h_\sigma^3\left\norm{J_{\sigma,1}\right}{L^2(\sigma)}^2\bigg)^{1/2}\right.\\
&\quad\left.+\bigg(\sum_{\sigma\in\edges_\mesh}h_\sigma\norm{J_{\sigma,2}}{L^2(\sigma)}^2\bigg)^{1/2}
\right\}\norm{\v}{H^2(\Omega)}+C_1\semi{\U^\perp}{P}\norm{\v}{H^2(\Omega)}
\end{split}
\end{equation}
Let 
\begin{equation}
\eta_\mesh(\Omega)=\bigg(\sum_{\tau\in\mesh}h_\tau^4\norm{R_\tau}{L^2(\tau)}^2\bigg)^{1/2}
+\bigg(\sum_{\sigma\in\edges_\mesh}h_\sigma^3\left\norm{J_{\sigma,1}\right}{L^2(\sigma)}^2\bigg)^{1/2}
+\bigg(\sum_{\sigma\in\edges_\mesh}h_\sigma\norm{J_{\sigma,2}}{L^2(\sigma)}^2\bigg)^{1/2}.
\end{equation}
To control the inconsistency term $\ap(e,\U^\perp)$, we employ Young's inequality and the norm equivalence from Lemma \ref{lem:PerpNormEquiv}
\begin{equation}\label{eq2:lem:EstimatorReliability}
\begin{split}
\ap(e,\U^\perp)&\leq\Ccont\enorm{e}{P}\enorm{\U^\perp}{P}
\leq\frac{\Ccont}{\Ccoer^{1/2}}\ap(e,e)^{1/2}\enorm{\U^\perp}{P},\\
&\leq\frac{\ap(e,e)}{4}+\frac{\Ccont^2}{\Ccoer}\enorm{\U^\perp}{P}^2
\leq\frac{\ap(e,e)}{4}+\frac{\Ccont^2}{\Ccoer}\Cperp^2\semi{\U^\perp}{P}^2.
\end{split}
\end{equation}
Let $C_2=\frac{\Ccont^2}{\Ccoer}\Cperp^2$.
Since $\v=e+\U^\perp$
\begin{equation}
\begin{split}
\norm{\v}{H^2(\Omega)}^2&\leq\Ccoer^{-1}\ap(e+\U^\perp,e+\U^\perp),\\
&=\Ccoer^{-1}\left(\ap(e,e)+2\ap(e,\U^\perp)+\ap(\U^\perp,\U^\perp)\right),\\
&\leq\Ccoer^{-1}\left(2\ap(e,e)+\left(1+2C_2\right)\semi{\U^\perp}{P}^2\right).
\end{split}
\end{equation}
Let $C_3^2=\Ccoer^{-1}\max\{2,(1+2C_2)\}$.
Summing up, applying Young's inequality with $\delta=1/2$,
\begin{equation}
\begin{split}
\frac{3}{4}\ap(e,e)&\leq\cproj\left(\eta_P(\Omega)+C_1\semi{\U^\perp}{P}\right)\norm{\v}{H^2(\Omega)}+C_2\semi{\U^\perp}{P}^2,\\
&\leq\cproj C_3\left(\eta_P(\Omega)+C_1\semi{\U^\perp}{P}\right)\left(\ap(e,e)+\semi{\U^\perp}{P}^2\right)^{1/2}+C_2\semi{\U^\perp}{P}^2,\\
&\leq C_3\left(\eta_P(\Omega)+C_1\semi{\U^\perp}{P}\right)^2
+\frac{1}{4}\left(\ap(e,e)+\semi{\U^\perp}{P}^2\right)+C_2\semi{\U^\perp}{P}^2,
\end{split}
\end{equation}
which makes for constants $C_\rm{rel,1}>0$ and $C_\rm{rel,2}>0$ depending on $C_1$, $C_2$ and $C_3$,
\begin{equation}
\begin{split}
\frac{1}{2}\ap(e,e)\leq&\frac{C_\rm{rel,1}}{2}\eta_P(\Omega)
+\frac{C_\rm{rel,2}}{2}\semi{\U^\perp}{P}^2.
\end{split}
\end{equation}


\end{proof}

The following lemma shows that the proposed estimator from Lemma \ref{lem:EstimatorReliability} is efficient in the sense that $\eta_\mesh$ is a sharp approximation to the error $\enorm{\u-\U}{\mesh}$ up to how well the partition resolves the source function $\eff$.
%
\begin{lemma}[Estimator Efficiency]\label{lem:EstimatorEfficiency}
Let $\mesh$ be a partition of $\Omega$ satisfying conditions \eqref{eq:sr}.
The module $\ms{ESTIMATE}$ produces a posteriori error estimate of the discrete solution error such that
\begin{equation}\label{eq:result:lem:EstimatorEfficiency}
\Ceff\,\eta_\mesh^2(\U,\Omega)\leq\enorm{\u-\U}{\mesh}^2+\osc_\mesh^2(\Omega).
\end{equation}
with constant $\Ceff$ depending only on $\cshape$.
\end{lemma}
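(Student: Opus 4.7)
The plan is to follow the classical Verf\"urth bubble-function strategy, adapted to the fourth-order operator $\Op=\Lap^2$. The three contributions in the indicator \eqref{eq:indicator} --- the cell residual $f-\Op\U$ and the two interior edge jumps $J_{\sigma,1}=\jump{\Lap\U/\n_\sigma}{\sigma}$, $J_{\sigma,2}=\jump{\Lap\U}{\sigma}$ --- will each be controlled by the local quantity $\norm{\Lap(\u-\U)}{L^2(\omega)}$ plus local data oscillation. The backbone is the identity
\begin{equation}\nonumber
\int_{\omega}(f-\Op\U)\phi \;=\; \int_\omega \Lap(\u-\U)\Lap\phi \;+\; \int_\sigma J_{\sigma,1}\phi \;-\; \int_\sigma J_{\sigma,2}\,{\textstyle\dfrac{\phi}{\nu}},
\end{equation}
for $\phi\in H^2_0(\omega)$ supported either on a single cell $\omega=\tau$ (no edge terms) or on an interior-edge patch $\omega=\omega_\sigma$. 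This follows from integrating by parts twice on each cell and invoking the interior $H^4_{\mathrm{loc}}$-regularity of $\u$ (which kills the jumps of $\Lap\u$ and $\dfrac{\Lap\u}{\n}$ across interior edges).

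For the interior residual on $\tau$, replace $f$ by $\Pip f$, which costs one oscillation term. The reduced residual $\Pip f-\Op\U$ lies in $\bb{P}_{r-2}(\tau)$, so pick an $H^2_0(\tau)$ cell bubble $b_\tau$ (vanishing together with its normal derivative on $\diff\tau$) normalized so that $\norm{b_\tau p}{L^2(\tau)}\sim\norm{p}{L^2(\tau)}$ for $p\in\bb{P}_{r-2}(\tau)$, and test against $\phi_\tau:=b_\tau(\Pip f-\Op\U)$ extended by zero. Combining the backbone identity with the inverse bound $\norm{\Lap\phi_\tau}{L^2(\tau)}\lapprox h_\tau^{-2}\norm{\phi_\tau}{L^2(\tau)}$ from Lemma~\ref{lem:ie} and dividing through gives
\begin{equation}\nonumber
h_\tau^4\norm{f-\Op\U}{L^2(\tau)}^2 \;\lapprox\; \norm{\Lap(\u-\U)}{L^2(\tau)}^2 \;+\; h_\tau^4\norm{f-\Pip f}{L^2(\tau)}^2.
\end{equation}

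For each interior edge $\sigma\in\edges_\mesh$ with patch $\omega_\sigma=\tau_1\cup\tau_2$ I will construct two complementary $H^2$-conforming edge bubbles on $\omega_\sigma$, each vanishing with its normal derivative on $\diff\omega_\sigma$. The first, $\phi^{(1)}_\sigma:=b^{(1)}_\sigma E_\sigma(J_{\sigma,1})$ (with $E_\sigma$ a polynomial-preserving constant-in-normal extension), is chosen so that $\phi^{(1)}_\sigma|_\sigma\sim J_{\sigma,1}$ while $\dfrac{\phi^{(1)}_\sigma}{\nu}|_\sigma=0$; the second, $\phi^{(2)}_\sigma:=b^{(2)}_\sigma E_\sigma(J_{\sigma,2})$, has $\phi^{(2)}_\sigma|_\sigma=0$ but $\dfrac{\phi^{(2)}_\sigma}{\nu}|_\sigma\sim J_{\sigma,2}$. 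Substituting each bubble into the backbone identity kills one of the two surviving boundary terms on $\sigma$ and isolates the corresponding jump. Using the scalings $\norm{\Lap\phi^{(1)}_\sigma}{L^2(\omega_\sigma)}\sim h_\sigma^{-3/2}\norm{J_{\sigma,1}}{L^2(\sigma)}$ and $\norm{\Lap\phi^{(2)}_\sigma}{L^2(\omega_\sigma)}\sim h_\sigma^{-1/2}\norm{J_{\sigma,2}}{L^2(\sigma)}$, and absorbing $\norm{f-\Op\U}{L^2(\omega_\sigma)}$ via the cell bound already obtained, yields
\begin{equation}\nonumber
h_\sigma^3\norm{J_{\sigma,1}}{L^2(\sigma)}^2 + h_\sigma\norm{J_{\sigma,2}}{L^2(\sigma)}^2 \;\lapprox\; \norm{\Lap(\u-\U)}{L^2(\omega_\sigma)}^2 + \sum_{\tau\subset\omega_\sigma}h_\tau^4\norm{f-\Pip f}{L^2(\tau)}^2.
\end{equation}

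Summing the cell and edge bounds over $\mesh$ and $\edges_\mesh$, absorbing the finite-overlap factor (bounded by $\cshape$ via \eqref{eq:sr}), and using $\norm{\Lap(\u-\U)}{L^2(\Omega)}^2\leq\enorm{\u-\U}{\mesh}^2$ yields \eqref{eq:result:lem:EstimatorEfficiency}. The main obstacle is the explicit construction of the two edge bubbles $\phi^{(1)}_\sigma,\phi^{(2)}_\sigma$: each must be $H^2$-conforming on $\omega_\sigma$, vanish to order two on $\diff\omega_\sigma$, and have prescribed but complementary trace/normal-trace profiles on $\sigma$ so as to separate the two jumps. On the reference patch they can be built as tensor products of a one-dimensional quartic Hermite-type bubble in the normal direction and a standard interior bubble tangentially; the careful bookkeeping of the $h_\sigma$-scalings for $\norm{\Lap\phi^{(i)}_\sigma}{L^2(\omega_\sigma)}$ is the fourth-order-specific difficulty, replacing the routine $h_\sigma^{-1}$ scaling familiar from the second-order Verf\"urth theory.
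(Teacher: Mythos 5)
The paper states this lemma without proof, so there is no argument to compare yours against; the bubble-function strategy you outline is the standard route for efficiency of a fourth-order residual estimator and is, in outline, sound. One correction is worth making: you do not need, and should not invoke, interior $H^4_{\mathrm{loc}}$-regularity of $\u$. Your backbone identity is obtained without it by writing $\int_{\omega}\Lap(\u-\U)\Lap\phi = \ellf(\phi) - \int_{\omega}\Lap\U\,\Lap\phi$, using the weak form \eqref{eq:cwp} for the $\u$-contribution (valid for every $\phi\in H^2_0(\Omega)$, hence for $\phi$ extended by zero from $H^2_0(\omega)$), and then integrating $\int_\omega\Lap\U\,\Lap\phi$ by parts cell by cell on the piecewise-polynomial $\U$; the jump terms arise from $\U$ alone, and no derivatives of $\u$ beyond second order ever appear. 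This matters because on a polygonal domain $\u$ is generically only in $H^{2+s}(\Omega)$ for some $s<2$ determined by the corner openings, so asserting $H^4_{\mathrm{loc}}$ up to interior edges near reentrant corners would be an unjustified hypothesis.

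The remainder of the plan is correct and the remaining work is construction, not idea. The cell bubble $b_\tau\in H^2_0(\tau)$ with $\norm{b_\tau p}{L^2(\tau)}\sim\norm{p}{L^2(\tau)}$ on $\bb{P}_{r-2}(\tau)$ exists by finite-dimensionality of $\bb{P}_{r-2}$ and compactness of its unit sphere, and your two edge-patch bubbles with complementary Hermite-type normal profiles (one with prescribed trace and vanishing normal derivative on $\sigma$, the other with vanishing trace and prescribed normal derivative) can indeed be built on the reference patch as tensor products; since $J_{\sigma,1}$ and $J_{\sigma,2}$ are polynomial traces on $\sigma$, the two-sided $L^2$ equivalences needed for the test step also follow by finite-dimensionality. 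The scalings $\norm{\Lap\phi^{(1)}_\sigma}{L^2(\omega_\sigma)}\sim h_\sigma^{-3/2}\norm{J_{\sigma,1}}{L^2(\sigma)}$ and $\norm{\Lap\phi^{(2)}_\sigma}{L^2(\omega_\sigma)}\sim h_\sigma^{-1/2}\norm{J_{\sigma,2}}{L^2(\sigma)}$ then come out of the reference-patch computation by the usual push-forward, and \eqref{eq:sr} controls the finite overlap when summing over cells and interior edges. Finally $\norm{\Lap(\u-\U)}{L^2(\Omega)}^2\leq\enorm{\u-\U}{\mesh}^2$ closes the argument, with the constant depending only on $\cshape$ as claimed.
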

In the following Lemma we show a local version of Lemma \ref{lem:EstimatorReliability}. While the result is not needed for convergence, it is required for quasi-optimality.
\begin{lemma}[Estimator discrete reliability]\label{lem:DiscreteEstimatorReliability}
Let $\mesh$ be a partition of $\Omega$ satisfying conditions \eqref{eq:sr} and let $\mesh_\ast=\ms{REFINE}\,[\mesh,R]$ for some refined set $R\subseteq\mesh$.
If $\U$ and $\U_\ast$ are the respective solutions to \eqref{eq:cdp} on $\mesh$ and $\mesh_\ast$, then for a constants $ C_\rm{dRel,1}, C_\rm{dRel,2}>0$, depending only on $\cshape$,
\begin{equation}\label{eq:result:lem:DiscreteEstimatorReliability}
\begin{split}
\enorm{\U^0_\ast-\U}{\mesh}^2&\leq C_\rm{dRel,1}\eta^2_\mesh(\U,\omega_{R_{\mesh\to\mesh_\ast}})
+C_\rm{dRel,2}\left(\gamma_1\norm{\U}{3/2,R}^2+\gamma_2\left\norm{\textstyle\dfrac{\U}{\nu}\right}{1/2,R}^2\right),
\end{split}
\end{equation}
where $\omega_{R_{\mesh\to\mesh_\ast}}$ is understood as the union of support extensions of refined cells from $\mesh$ to obtain $\mesh_\ast$.
\end{lemma}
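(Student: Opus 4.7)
The plan is to mirror the structure of the proof of Lemma \ref{lem:EstimatorReliability}, with the continuous solution $\u$ replaced by the fine-mesh discrete solution $\U_\ast$ and, crucially, with a localized quasi-interpolant so that only residual and boundary contributions from the refined region $\omega_{R_{\mesh\to\mesh_\ast}}$ survive.

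First I would set $E := \U^0_\ast-\U$ and, anticipating the decomposition used in the continuous reliability proof, write $E = w - \U^\perp$, where $w := \U^0_\ast-\U^0\in H^2_0(\Omega)$ and $\U=\U^0+\U^\perp$ is the orthogonal splitting with respect to $\ap$ (Lemma \ref{lem:SolveConformPart}). Applying coercivity of $\ap$ on all of $H^2(\Omega)$ (Lemma \ref{lem:csvnbf}) gives $\Ccoer\enorm{E}{\mesh}^2\leq\ap(E,E)$, and then I would split
\[
\ap(E,E)=\ap(E,w)-\ap(E,\U^\perp),
\]
mirroring the identity \eqref{maineq:lem:EstimatorReliability}.

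For the conforming term $\ap(E,w)$, the key step is to construct a quasi-interpolant $\Ip w\in\Xp\cap H^2_0(\Omega)$ with the locality property $\supp(w-\Ip w)\subseteq\omega_{R_{\mesh\to\mesh_\ast}}$; this is where the hierarchical B-spline structure enters, since the bases of $\Xp$ and $\Xpp$ agree outside the refined region, making $w$ already an element of $\Xp^0$ on unrefined cells. Then, since $\Ip w\in\Xp\cap H^2_0\subseteq\Xpp\cap H^2_0$, the two discrete equations $\ap(\U,\Ip w)=\ellf(\Ip w)$ and $\app(\U_\ast,\Ip w)=\ellf(\Ip w)$ together with Lemma \ref{lem:SolveConformPart} applied on $\mesh_\ast$ yield a discrete partial Galerkin orthogonality $\ap(E,\Ip w)=0$, possibly up to a boundary remainder already supported on $\diff R\cap\Gamma$ that can be absorbed into $\|\U\|_{3/2,R}$ and $\|\dfrac{\U}{\nu}\|_{1/2,R}$. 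Cell-wise integration by parts applied to $\ap(E,w-\Ip w)$, exactly as in \eqref{eq1:lem:EstimatorReliability}, then produces the interior residuals $f-\Op\U$ and interior jumps of $\Lap\U$ and $\dfrac{\Lap\U}{\nu}$, together with the weak boundary terms; the support condition on $w-\Ip w$ and the quasi-interpolation estimates \eqref{eq:Quasi1}--\eqref{eq:Quasi2} confine all these contributions to $\omega_{R_{\mesh\to\mesh_\ast}}$, yielding $\lapprox\bigl(\eta_\mesh(\U,\omega_{R_{\mesh\to\mesh_\ast}})+\cdots\bigr)\|w\|_{H^2(\Omega)}$.

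For the non-conforming term, I would apply continuity (Lemma \ref{lem:ctynbf}) and Young's inequality as in \eqref{eq2:lem:EstimatorReliability}, then invoke Lemma \ref{lem:PerpNormEquiv} to replace $\enorm{\U^\perp}{\mesh}$ by the boundary seminorms $\|\U^\perp\|_{3/2,\mesh}+\|\dfrac{\U^\perp}{\nu}\|_{1/2,\mesh}$; since $\U^0$ vanishes on $\Gamma$ these coincide with the corresponding seminorms of $\U$, and the locality of $\U^\perp$ (which is driven precisely by the boundary degrees of freedom of $\U$ near $\Gamma$, cf.\ the proof of Lemma \ref{lem:PerpNormEquiv}) restricts the nonzero contributions to edges in $R$. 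To close the argument I would bound $\|w\|_{H^2(\Omega)}^2$ by $\Ccoer^{-1}\ap(E+\U^\perp,E+\U^\perp)$ as in the continuous proof, expand using orthogonality \eqref{eq:OrthogonalDecomp}, and apply Young's inequality with a small parameter to absorb a fraction of $\ap(E,E)$ into the left-hand side, arriving at \eqref{eq:result:lem:DiscreteEstimatorReliability} with constants depending only on $\cshape$, $\Ccont$, $\Ccoer$ and $\Cperp$.

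The main obstacle I anticipate is establishing the localized quasi-interpolant: unlike the continuous case, here $\Ip$ must preserve $w$ on unrefined cells exactly, and this preservation relies on nestedness $\Xp^0\subseteq\Xpp^0$ and on the hierarchical B-spline refinement machinery. A secondary delicate point is that $\ap$ and $\app$ use different projections $\Pip\neq\Pipp$, so the step asserting $\ap(E,\Ip w)=0$ is not literally a Galerkin orthogonality; extra boundary cross-terms appear and must be shown to be controlled by $\gamma_1\|\U\|_{3/2,R}^2+\gamma_2\|\dfrac{\U}{\nu}\|_{1/2,R}^2$ alone, using the inverse estimates of Lemma \ref{lem:L2estimates} on the refined boundary edges.
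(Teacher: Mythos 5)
Your proposal mirrors the paper's proof essentially step for step: the same decomposition $E_\ast=E^0_\ast-\U^\perp$, the same $\Xp^0$-orthogonality from Lemma~\ref{lem:SolveConformPart} and nesting, the same construction of a localized quasi-interpolant equal to $E^0_\ast$ outside the refined region so that $E^0_\ast-\V^0$ is supported on $\omega_{R_{\mesh\to\mesh_\ast}}$, the same integration-by-parts localization, and the same Young-inequality absorption of $\ap(E_\ast,\U^\perp)$ via Lemma~\ref{lem:PerpNormEquiv}. Your closing worry about $\Pip\neq\Pipp$ creating extra boundary cross-terms in the asserted orthogonality $\ap(E_\ast,\Ip w)=0$ is actually moot: since $\Ip w\in\Xp^0\subset H^2_0(\Omega)$ and $\U^0_\ast\in H^2_0(\Omega)$, every projection-dependent boundary integral in both $\ap$ and $\app$ vanishes identically on these arguments, so $\ap(\U^0_\ast,\Ip w)=\a(\U^0_\ast,\Ip w)=\app(\U^0_\ast,\Ip w)=\ellf(\Ip w)$ and the orthogonality holds exactly with no remainder to control.
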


\begin{proof}
In view of \eqref{eq:lem:SolveConformPart} and the nesting of spline spaces, $\ap(\U_\ast^0,\V^0)=\ellf(\V^0)$ holds if $\V^0\in\Xp^0$ from which we obtain $\ap(\U_\ast^0-\U,\V^0)=0$ for every $\V^0\in\Xp^0$.
Let $E_\ast^0=\U_\ast^0-\U_0$ and let $E_\ast=\U^0_\ast-\U\equiv E_\ast^0-\U^\perp$. Then for any $\V_0\in\Xp^0$ we write an analogous expression to \eqref{maineq:lem:EstimatorReliability}
\begin{equation}\label{maineq:lem:}
\ap(E_\ast,E_\ast)=\ap(E_\ast,E_\ast^0-\U^\perp)=\ap(E_\ast,E^0_\ast-\V^0)-\ap(E_\ast,\U^\perp)
\end{equation}
which we proceed to control in terms of the estimator.
For the first term, we form disconnected subdomains $\Omega_i\subseteq\Omega$, $i\in J$, each formed from the interior of connected union of cell support extensions. Set $\Omega_\ast=\cup_{\tau\in R_{\mesh\to\mesh_\ast}}\overline{\omega_\tau}$.
Then to each subdomain $\Omega_i$ we form a partition $\mesh_i=\{\face\in\mesh:\face\subset\Omega_i\}$, interior edges $\edges_i=\{\sigma\in\edges_\mesh:\sigma\subset\diff\tau,\ \tau\in\mesh_i\}$ and boundary edges $\bedges_i=\{\sigma\in\bedges_\mesh:\sigma\subset\diff\tau,\ \tau\in\mesh_i\}$, and a corresponding finite-element space $\X_i$. Let $I_i:L^2(\Omega_i)\to\X_i$ satisfy the local estimates \eqref{eq:Quasi1} and \eqref{eq:Quasi2}
Let $\V^0\in\Xp^0$ be an approximation of $E^0_\ast$ be given by
\begin{equation}
\V^0=E^0_\ast\ds{1}_{\Omega\backslash\Omega_\ast}+\sum_{i\in J}(I_i^0E^0_\ast)\cdot\ds{1}_{\Omega_i}.
\end{equation}
Then $E^0_\ast-\V^0\equiv0$ on $\Omega\backslash\Omega_\ast$. 
To localize the error on $\omega_{R_{P\to P_\ast}}$ we use intergration by parts to express
\begin{equation}
\begin{split}
\ap(E_\ast,E^0_\ast-\V^0)=&\sum_{i\in J}
\bigg[\sum_{\tau\in\mesh_i}\inner{R_\tau,E^0_\ast-\Ip  E^0_\ast}_\tau
+\sum_{\sigma\in\edges_i}\left\{\inner{J_{\sigma,1},E^0_\ast-\Ip E^0_\ast}_\sigma+\inner{J_{\sigma,2},E^0_\ast-\Ip E^0_\ast}_\sigma\right\}\\
&+\sum_{\sigma\in\bedges_i}\left(\int_\sigma\U{\textstyle\dfrac{}{\n_\sigma}}\left[\Pip\Lap(E^0_\ast-\Ip E^0_\ast)\right]-\int_\sigma{\textstyle\dfrac{\U}{\n_\sigma}}\Pip\Lap(E^0_\ast-\Ip E^0_\ast)\right)\bigg],\\
\end{split}
\end{equation}
\begin{equation}
\begin{split}
\sum_{\tau\in\mesh_i}&\inner{R_\tau,E^0_\ast-\Ip E^0_\ast}_\tau
+\sum_{\sigma\in\edges_i}\left\{\inner{J_{\sigma,1},E^0_\ast-\Ip E^0_\ast}_\sigma+\inner{J_{\sigma,2},E^0_\ast-\Ip E^0_\ast}_\sigma\right\}\\
&\leq\cproj\bigg(\sum_{\tau\in\mesh_i}\eta^2_\mesh(\U,\tau)\bigg)^{1/2}\bigg(\sum_{\tau\in\mesh_i}\norm{E^0_\ast}{H^2(\omega_\tau)}^2\bigg)^{1/2}
\leq\cproj \cshape\eta_\mesh(\U,\Omega_i)\norm{E_\ast^0}{H^2(\Omega_i)}
\end{split}
\end{equation}
The boundary intergal terms will be control by the inconsistnt part of the spline solution 
\begin{equation}
\begin{split}
\sum_{\sigma\in\bedges_i}&\left(\int_\sigma\U{\textstyle\dfrac{}{\n_\sigma}}\left[\Pip\Lap(E^0_\ast-\Ip E^0_\ast)\right]-\int_\sigma{\textstyle\dfrac{\U}{\n_\sigma}}\Pip\Lap(E^0_\ast-\Ip E^0_\ast)\right)\leq \semi{\U^\perp}{P_i}\norm{E_\ast^0}{H^2(\Omega_i)}
\end{split}
\end{equation}
Together we arrive at an estimate for the first term in \eqref{lem:DiscreteEstimatorReliability}
\begin{equation}
\ap(E_\ast,E^0_\ast-\V^0)\leq\cproj \cshape\left(\eta_\mesh(\U,\Omega_\ast)+C_1\semi{\U^\perp}{P}\right)\norm{E_\ast^0}{H^2(\Omega_\ast)}
\end{equation}
To control the inconsistent term from \eqref{lem:DiscreteEstimatorReliability}, we follow the same reasoning made in \eqref{eq2:lem:EstimatorReliability} from Lemma \ref{lem:EstimatorReliability} to get
\begin{equation}
\begin{split}
\ap(E_\ast,\U^\perp)&\leq\frac{\ap(E_\ast,E_\ast)}{2}+\frac{C_2}{2}\semi{\U^\perp}{P}^2,
\end{split}
\end{equation}
where $C_2$ retains the same meaning as before.
 Noting that $E_\ast^0=E_\ast+\U^\perp$, $\norm{E_\ast^0}{H^2(\Omega_\ast)}\leq\norm{E_\ast}{H^2(\Omega_\ast)}+\norm{\U^\perp}{H^2(\Omega_\ast)}$
Invoking norm equivalence \eqref{eq:lem:PerpNormEquiv} Summing up we arrive 
\begin{equation}
\ap(E_\ast,E_\ast)\leq C_\rm{dRel,1}\eta^2_\mesh(\U,\Omega_\ast)+C_\rm{dRel,2}\semi{\U^\perp}{P}^2
\end{equation}
\end{proof}
The presence of negative powers in $\semi{\U^\perp}{\mesh}$ on the right-hand side in \eqref{eq:res:lem:EstimatorReliability} and \eqref{eq:result:lem:DiscreteEstimatorReliability} may appear to pose a problem with decreasing mesh-size along the boundary. With the following realization from \cite{al2018adaptivity} we have shown that contributions from domain boundary integrals are dominated by the those coming from the mesh interior. 
\begin{lemma}\label{lem:boundarycontrol}
For sufficiently large stabilization terms $\gamma_1$ and $\gamma_2$,
\begin{equation}\label{eq:lem:boundarycontrol}
(\gamma_1-C_\rm{R})\norm{\U}{3/2,\mesh}^2+(\gamma_2-C_\rm{R})\left\norm{\textstyle\dfrac{\U}{\nu}\right}{1/2,\mesh}^2
\leq\Ccoer^{-1}\eta_\mesh^2(\U,\Omega)
\end{equation}
with $C_\rm{R}\lapprox\frac{\cshape}{\Ccoer}$.
\end{lemma}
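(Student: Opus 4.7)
The plan is to apply coercivity to the perpendicular part $\U^\perp\in\Xp^\perp$ of $\U$ and then exploit the discrete equation $\ap(\U,\V)=\ellf(\V)$ with $\V=\U^\perp$ to convert the right-hand side into an estimator bound. The starting observation is that since $\U^0\in\Xp^0\subset H_0^2(\Omega)$, the functions $\U$ and $\U^\perp$ share the same trace and normal trace on $\Gamma$, so
\begin{equation*}
\gamma_1\norm{\U}{3/2,\mesh}^2+\gamma_2\left\norm{\textstyle\dfrac{\U}{\nu}\right}{1/2,\mesh}^2
 = \gamma_1\norm{\U^\perp}{3/2,\mesh}^2+\gamma_2\left\norm{\textstyle\dfrac{\U^\perp}{\nu}\right}{1/2,\mesh}^2
 \le\enorm{\U^\perp}{\mesh}^2.
\end{equation*}
By the orthogonality \eqref{eq:OrthogonalDecomp} and the discrete equation \eqref{eq:dnp} one has $\ap(\U^\perp,\U^\perp)=\ap(\U,\U^\perp)=\ellf(\U^\perp)$, so coercivity (Lemma \ref{lem:csvnbf}) yields
\begin{equation*}
\Ccoer\Bigl(\gamma_1\norm{\U}{3/2,\mesh}^2+\gamma_2\left\norm{\textstyle\dfrac{\U}{\nu}\right}{1/2,\mesh}^2\Bigr)\le\ellf(\U^\perp),
\end{equation*}
which already accounts for the factor $\Ccoer^{-1}$ appearing in \eqref{eq:lem:boundarycontrol}.

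Next I would bound $\ellf(\U^\perp)=(f,\U^\perp)$ by rewriting $\ap(\U,\U^\perp)=(f,\U^\perp)$ through cellwise integration by parts. Because $\Lap\U,\Lap\U^\perp\in\cal{P}_\mesh^{r-2}$, the projection $\Pip$ acts as the identity on them, so after cancelling the matching $\Gamma$-integrals one obtains an exact identity in which the interior residual $(f-\Op\U,\U^\perp)$, the interior-edge jumps $\jump{\Lap\U}{\sigma}$ and $\jump{\textstyle\dfrac{\Lap\U}{\n_\sigma}}{\sigma}$, and the residual boundary integrals $\int_\Gamma\Lap\U^\perp\,\textstyle\dfrac{\U}{\nu}$ and $\int_\Gamma\textstyle\dfrac{\Lap\U^\perp}{\nu}\,\U$ appear with the same mesh weights that build $\eta_\mesh^2$ and the boundary semi-norms. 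Cauchy--Schwarz against those weights, combined with the trace and inverse estimates of Lemmas \ref{lem:L2estimates} and \ref{lem:ie} (used exactly as in the proofs of Lemmas \ref{lem:ctynbf} and \ref{lem:EstimatorReliability}), bounds the residual and jump contributions by $\eta_\mesh(\U,\Omega)\,\enorm{\U^\perp}{\mesh}$ and the boundary $\Lap\U^\perp$-integrals by $\norm{\Lap\U^\perp}{L^2(\Omega)}\bigl(\norm{\U}{3/2,\mesh}+\left\norm{\textstyle\dfrac{\U}{\nu}\right}{1/2,\mesh}\bigr)$.

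Lemma \ref{lem:PerpNormEquiv} then converts both $\enorm{\U^\perp}{\mesh}$ and $\norm{\Lap\U^\perp}{L^2(\Omega)}\le\enorm{\U^\perp}{\mesh}$ into a multiple of $\norm{\U^\perp}{3/2,\mesh}+\left\norm{\textstyle\dfrac{\U^\perp}{\nu}\right}{1/2,\mesh}$, which coincides with $\norm{\U}{3/2,\mesh}+\left\norm{\textstyle\dfrac{\U}{\nu}\right}{1/2,\mesh}$ by the $\Gamma$-identification established at the outset. Young's inequality applied to the cross terms splits them into an $\eta_\mesh^2(\U,\Omega)$ contribution and a quadratic boundary semi-norm contribution with constant $\lapprox\cshape$. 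Dividing the opening coercivity inequality by $\Ccoer$ and absorbing the quadratic boundary terms into the left-hand side produces \eqref{eq:lem:boundarycontrol} with $C_\rm{R}\lapprox\cshape/\Ccoer$; positivity of $\gamma_i-C_\rm{R}$ is ensured by taking $\gamma_1,\gamma_2$ sufficiently large, consistent with Lemma \ref{lem:csvnbf}.

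The main technical obstacle will be the estimator bound on $(f-\Op\U,\U^\perp)$: Cauchy--Schwarz produces a weighted sum of the form $\sum_{\tau\in\mesh} h_\tau^{-4}\norm{\U^\perp}{L^2(\tau)}^2$ that cannot be dominated by $\norm{\Lap\U^\perp}{L^2(\Omega)}^2$ through a direct Poincar\'e argument since $\U^\perp\notin H_0^2(\Omega)$. I would resolve this by invoking the fact, shown en route to Lemma \ref{lem:PerpNormEquiv}, that $\U^\perp$ vanishes outside the boundary layer $\cal{D}_\Gamma$, so the sum reduces to boundary-adjacent cells; for those cells, a scaling argument combined with finite-dimensional norm equivalence on $\Xp^\perp$ and the shape-regularity $\cshape$ bounds it by $\enorm{\U^\perp}{\mesh}^2$, and the resulting $\cshape$-dependence is precisely what propagates into $C_\rm{R}$.
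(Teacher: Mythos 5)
The opening steps are sound: since $\U^0\in\Xp^0\subset H^2_0(\Omega)$, the traces of $\U$ and $\U^\perp$ coincide on $\Gamma$, so $\gamma_1\norm{\U}{3/2,\mesh}^2+\gamma_2\norm{\dfrac{\U}{\nu}}{1/2,\mesh}^2=\Sigma_\mesh(\U^\perp,\U^\perp)\le\enorm{\U^\perp}{\mesh}^2$, and the orthogonality \eqref{eq:OrthogonalDecomp} together with the discrete equation indeed gives $\ap(\U^\perp,\U^\perp)=\ap(\U,\U^\perp)=\ellf(\U^\perp)$, so coercivity reduces the claim to an estimate of $\ap(\U,\U^\perp)$. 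Cancelling the matching $\Gamma$-integrals after piecewise integration by parts (valid because $\Pip$ acts as the identity on $\Lap\U,\Lap\U^\perp$) is also fine and yields the interior residual, interior jumps, and the two $\Lap\U^\perp$-boundary integrals. Up to that point, the proposal is correct and plausible.

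The gap is in the treatment of the interior residual and interior jump contributions, and you identified it yourself: Cauchy--Schwarz against the estimator weights produces $\bigl(\sum_\tau h_\tau^{-4}\norm{\U^\perp}{L^2(\tau)}^2\bigr)^{1/2}$ and analogous edge sums $\bigl(\sum_\sigma h_\sigma^{-3}\norm{\U^\perp}{L^2(\sigma)}^2\bigr)^{1/2}$, $\bigl(\sum_\sigma h_\sigma^{-1}\norm{\dfrac{\U^\perp}{\n_\sigma}}{L^2(\sigma)}^2\bigr)^{1/2}$ over interior edges. Your proposed resolution, that $\U^\perp$ vanishes outside the boundary layer $\cal{D}_\Gamma$, is not justified and is not generically true: $\U^\perp=\U-\pi_\mesh^0\U$ is the $\ap$-orthogonal complement of $\U^0$ in $\Xp$, not the span of boundary-adjacent basis functions, and Lemma \ref{lem:PerpNormEquiv} only asserts that a function in $\Xp^\perp$ with vanishing $\Gamma$-trace data must be identically zero; it does not say $\Xp^\perp$-functions are supported near $\Gamma$. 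Even if $\U^\perp$ were supported in boundary cells, the step you need is a reverse Poincar\'e $\norm{\U^\perp}{L^2(\tau)}\lapprox h_\tau^2\norm{\Lap\U^\perp}{L^2(\tau)}$, which fails for low-order polynomials (e.g.\ constants, where the right side vanishes); and a ``finite-dimensional norm equivalence on $\Xp^\perp$'' would give constants depending on $\dim\Xp^\perp$, hence on the mesh, so it cannot produce a uniform $C_\rm{R}\lapprox\cshape/\Ccoer$. By contrast, in the proof of Lemma \ref{lem:EstimatorReliability} the analogous interior terms are harmless precisely because the test function there is $\v-\Ip\v$ with $\v=\u-\U^0\in H_0^2(\Omega)$, so the quasi-interpolation estimates \eqref{eq:Quasi1}--\eqref{eq:Quasi2} supply the needed $h_\tau^2$ decay; $\U^\perp$ is, by construction, the part of $\U$ that is not in $H^2_0$, so that mechanism is unavailable here. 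Until you produce a valid bound on the interior residual and interior jump terms against $\U^\perp$ (or restructure the argument to avoid testing with $\U^\perp$ directly), the proof does not close.
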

\begin{remark}
From now on we let 
\begin{equation}\label{eq:newgamma}
\gamma:=\min\{\gamma_1-C_R,\gamma_2-C_R\}
\end{equation}
\end{remark}
\begin{corollary}\label{cor:aposteriori}
Under the assumptions of lemma \ref{lem:EstimatorReliability} and lemma \ref{lem:DiscreteEstimatorReliability}, if $\gamma>0$ then
\begin{equation}
\ap(\u-\U,\u-\U)\leq\Crel\etap^2(\U,\Omega),
\end{equation}
and
\begin{equation}
\enorm{\U^0_\ast-\U}{\mesh_\ast}^2\leq\Cdrel\eta_\mesh^2(\U,\omega_{R_{\mesh\to\mesh_\ast}})+\gamma^{-1}\Ccoer^{-1}\eta^2_\mesh(\U,\Omega).
\end{equation}
\end{corollary}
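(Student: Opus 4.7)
The plan is to post-process the two reliability estimates by substituting the boundary-stabilization control of Lemma \ref{lem:boundarycontrol}, which will eliminate the mesh-dependent stabilization seminorms on their right-hand sides in favour of a multiple of the estimator $\etap$. This is essentially bookkeeping; no new PDE argument is required.

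First I would record the one-line consequence of Lemma \ref{lem:boundarycontrol}. Since $\gamma := \min\{\gamma_1-C_\rm{R},\,\gamma_2-C_\rm{R}\} > 0$, we have $\gamma_i-C_\rm{R} \geq \gamma$ for $i=1,2$, and so \eqref{eq:lem:boundarycontrol} delivers
\begin{equation}
\gamma\left(\norm{\U}{3/2,\mesh}^2 + \left\norm{\textstyle\dfrac{\U}{\nu}\right}{1/2,\mesh}^2\right) \leq \Ccoer^{-1}\etap^2(\U,\Omega).
\end{equation}
Since $\gamma_1,\gamma_2 \leq \max\{\gamma_1,\gamma_2\}$, the weighted combination $\gamma_1\norm{\U}{3/2,\mesh}^2 + \gamma_2\left\norm{\textstyle\dfrac{\U}{\nu}\right}{1/2,\mesh}^2$ is therefore bounded by $\bigl(\max\{\gamma_1,\gamma_2\}/\gamma\bigr)\Ccoer^{-1}\etap^2(\U,\Omega)$.

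For the first inequality, I would insert this bound on the stabilization combination directly into \eqref{eq:res:lem:EstimatorReliability} and absorb $C_\rm{rel,1}$, $C_\rm{rel,2}$, and the factor $\max\{\gamma_1,\gamma_2\}/(\gamma\Ccoer)$ into a single constant $\Crel$, yielding $\ap(\u-\U,\u-\U) \leq \Crel\,\etap^2(\U,\Omega)$ as stated.

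For the second inequality the same strategy applies to \eqref{eq:result:lem:DiscreteEstimatorReliability}, but with one extra monotonicity step: since $R \subseteq \mesh$, the restricted boundary norms satisfy $\norm{\U}{3/2,R} \leq \norm{\U}{3/2,\mesh}$ and the analogous estimate for the normal derivative, so the stabilization sum over $R$ on the right-hand side of \eqref{eq:result:lem:DiscreteEstimatorReliability} is dominated by the full-boundary stabilization sum, which in turn is controlled by $\gamma^{-1}\Ccoer^{-1}\etap^2(\U,\Omega)$ from the preliminary step. The subtle point — and what I would identify as the main obstacle — is precisely this upgrade from the $R$-localized norm to the global one: it is what forces the discrete reliability bound to retain the non-local term $\gamma^{-1}\Ccoer^{-1}\etap^2(\U,\Omega)$ rather than collapse entirely to an estimator over $\omega_{R_{\mesh\to\mesh_\ast}}$, and it is a direct manifestation of the non-local character of the inconsistency inherent to the modified bilinear form.
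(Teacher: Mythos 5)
Your overall approach — post-processing the two reliability lemmas with Lemma~\ref{lem:boundarycontrol} — is exactly what this corollary wants, and the paper gives no separate argument, so the plan is right.  However, the specific bookkeeping step you use to handle the $\gamma_1,\gamma_2$-weights is not sharp enough for the role the corollary plays downstream.  You bound
\begin{equation*}
\gamma_1\norm{\U}{3/2,\mesh}^2+\gamma_2\left\norm{\textstyle\dfrac{\U}{\nu}\right}{1/2,\mesh}^2
\le \frac{\max\{\gamma_1,\gamma_2\}}{\gamma}\,\Ccoer^{-1}\etap^2(\U,\Omega),
\end{equation*}
and then propose to absorb $\max\{\gamma_1,\gamma_2\}/\gamma$ into $\Crel$ (resp.\ $\Cdrel$).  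But $\max\{\gamma_1,\gamma_2\}/\gamma$ is not bounded independently of the stabilization parameters: if $\gamma_1\gg\gamma_2$ it can be arbitrarily large, so $\Crel$ would no longer be a fixed constant.  Since the whole purpose of isolating the $\gamma^{-1}$ factor in the second inequality (and of making $\Crel$ $\gamma$-independent in the first) is that Theorem~\ref{thm:nc} and Lemma~\ref{lem:OptimalMarking} later send $\gamma\to\infty$ and need these constants to stay fixed, this step would quietly break the convergence machinery.

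The clean fix is not to compare $\gamma_i$ against $\max\{\gamma_1,\gamma_2\}$ at all, but to split $\gamma_i=(\gamma_i-C_\rm{R})+C_\rm{R}$ and apply Lemma~\ref{lem:boundarycontrol} twice: once to the $(\gamma_i-C_\rm{R})$-weighted part directly, and once (after dividing by $\gamma$) to the $C_\rm{R}$-weighted part.  This gives
\begin{equation*}
\gamma_1\norm{\U}{3/2,\mesh}^2+\gamma_2\left\norm{\textstyle\dfrac{\U}{\nu}\right}{1/2,\mesh}^2
\le\left(1+\frac{C_\rm{R}}{\gamma}\right)\Ccoer^{-1}\etap^2(\U,\Omega),
\end{equation*}
whose prefactor is bounded by $2\Ccoer^{-1}$ for all $\gamma>C_\rm{R}$ and tends to $\Ccoer^{-1}$ as $\gamma\to\infty$, so it really can be absorbed into a $\gamma$-independent $\Crel$.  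Your monotonicity step $\norm{\U}{3/2,R}\le\norm{\U}{3/2,\mesh}$ for the discrete-reliability inequality is correct and needed, and your closing observation about the surviving global term $\gamma^{-1}\Ccoer^{-1}\etap^2(\U,\Omega)$ being the footprint of the inconsistency is the right interpretation — just be aware that with your current constant the second inequality also wouldn't match the clean $\gamma^{-1}\Ccoer^{-1}$ prefactor stated in the corollary.
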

\section{Convergence} 		
In section we show that the derived computable estimator \eqref{eq:estimator} when used to direct refinement will result in decreased error.
This will hinge on the estimator Lipschitz property of Lemma \ref{lem:elp}.
To show that procedure \eqref{eq:afem} exhibits convergence we must be able to relate the errors of consecutive discrete solutions.
In the conforming discrete method \eqref{eq:cdp} the symmetry of the bilinear form, consistency of the formulation and finite-element spline space nesting will readily provide that via Galerkin Pythagoras.
This is not the case in Nitsche's formulation \eqref{eq:ndp} since our formulation is no longer consistent with \eqref{eq:cwp}.
We recall some of the results needed for convergence. 
\begin{lemma}[Estimator Lipschitz property]\label{lem:elp}
Let $\mesh$ be a partition of $\Omega$ satisfying conditions \eqref{eq:sr}.
There exists a constant $\Clip>0$, depending only $\cshape$, such that for any cell $\face\in\mesh$ we have
\begin{equation}\label{eq:result:lem:elp}
|\eta_\mesh (\V,\face)-\eta_\mesh (\W,\face)|
\leq\Clip\semi{\V-\W}{H^2(\omega_\face)},
\end{equation}
holding for every pair of finite-element splines $\V$ and $\W$ in $\X_\mesh$.
\end{lemma}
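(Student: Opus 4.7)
The plan is to exploit the $\ell^2$ structure of $\eta_\mesh(\cdot,\tau)$ and then apply the polynomial inverse and discrete trace estimates of Lemma \ref{lem:ie} term-by-term on the cell $\tau$ and on its adjacent cells inside $\omega_\tau$.

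First, I would view $\eta_\mesh(\V,\tau)$ as the Euclidean norm of a vector whose components are $h_\tau^2\norm{f-\Op\V}{L^2(\tau)}$ and, for each $\sigma\subset\diff\tau$, $h_\sigma^{3/2}\bigl\|\jump{\diffe\Lap\V}{\sigma}\bigr\|_{L^2(\sigma)}$ and $h_\sigma^{1/2}\norm{\jump{\Lap\V}{\sigma}}{L^2(\sigma)}$. Applying the reverse triangle inequality $|\,\|x\|_2-\|y\|_2\,|\leq\|x-y\|_2$ component-wise, and using that $f$ cancels in the interior residual ($(f-\Op\V)-(f-\Op\W)=\Op(\W-\V)$) and that jumps are linear, I reduce to bounding
\[
h_\tau^4\norm{\Op(\V-\W)}{L^2(\tau)}^2
+\sum_{\sigma\subset\diff\tau}\Bigl(h_\sigma^{3}\bigl\|\jump{\diffe\Lap(\V-\W)}{\sigma}\bigr\|_{L^2(\sigma)}^2
+h_\sigma\norm{\jump{\Lap(\V-\W)}{\sigma}}{L^2(\sigma)}^2\Bigr)
\]
by $\Clip^2\semi{\V-\W}{H^2(\omega_\tau)}^2$.

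Next, the interior term is handled by the polynomial inverse estimate \eqref{eq:inve:lem:ie} with $s=2$, $t=4$: since $\V-\W\in\bb{P}_r(\tau)$, we get $h_\tau^2\norm{\Lap^2(\V-\W)}{L^2(\tau)}\leq\cinv\semi{\V-\W}{H^2(\tau)}$, so the interior contribution is bounded by a multiple of $\semi{\V-\W}{H^2(\tau)}^2$. For each jump term along an interior edge $\sigma$ shared by cells $\tau,\tau'\subset\omega_\tau$, I split the jump into its two one-sided traces and apply the discrete trace inequality \eqref{eq:dtrace:lem:ie}: $\norm{\Lap(\V-\W)}{L^2(\sigma)}\leq\cdtrace h_\sigma^{-1/2}\norm{\Lap(\V-\W)}{L^2(\tau')}$, giving $h_\sigma\norm{\jump{\Lap(\V-\W)}{\sigma}}{L^2(\sigma)}^2\lapprox\semi{\V-\W}{H^2(\omega_\tau)}^2$. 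For the normal-derivative jump I combine the discrete trace estimate with an additional inverse estimate \eqref{eq:inve:lem:ie} (from $H^3$ to $H^2$) to absorb the extra derivative at the price of $h_\tau^{-1}$; the factor $h_\sigma^3$ then balances the two powers of $h_\sigma^{-1/2}$ and the $h_\tau^{-1}$, using the graded/shape-regularity property \eqref{eq:sr} which yields $h_\sigma\sim h_\tau$ for $\sigma\subset\diff\tau$.

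Collecting the contributions from the at most four edges of $\tau$ (and their adjacent cells, all contained in $\omega_\tau$), and using the finite-intersection property of \eqref{eq:sr} to absorb the number of overlapping neighbors into a constant depending only on $\cshape$, yields \eqref{eq:result:lem:elp} with $\Clip$ depending only on $\cshape$, $\cinv$ and $\cdtrace$. The main obstacle I anticipate is purely bookkeeping: making sure that the powers of $h_\sigma$ and $h_\tau$ balance correctly in the normal-derivative jump term (where one needs both a discrete trace and an inverse estimate) and that the shape-regularity constants are invoked uniformly so that the final bound depends only on $\cshape$.
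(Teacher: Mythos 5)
The paper does not actually prove Lemma \ref{lem:elp}; the lemma is stated without a proof, being a standard estimator stability (local Lipschitz) property in residual-based AFEM (cf.\ \cite{cascon2008quasi}, \cite{bonito2010quasi}, \cite{buffa2016adaptive}). Your argument is exactly that standard route and is correct. The reverse triangle inequality in $\ell^2$, combined with the cancellation of $f$ in $(f-\Op\V)-(f-\Op\W)=\Op(\W-\V)$ and the linearity of the jump operators, reduces \eqref{eq:result:lem:elp} to bounding the homogeneous indicator of $\V-\W$. The interior term is controlled by the inverse estimate \eqref{eq:inve:lem:ie} from $H^4$ down to $H^2$; the Laplacian-jump term by the discrete trace \eqref{eq:dtrace:lem:ie} applied to each one-sided trace on $\tau$ and on its edge-neighbor, both of which lie in $\omega_\tau$; and the normal-derivative jump by a discrete trace followed by an $H^3\!\to\!H^2$ inverse estimate. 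The weights $h_\tau^4$, $h_\sigma$, $h_\sigma^3$ then balance because the shape-regularity conditions \eqref{eq:sr} give $h_\sigma\sim h_\tau$ for $\sigma\subset\diff\tau$, and the finite-intersection property keeps the number of contributing neighbors bounded by $\cshape$, so $\Clip$ depends only on $\cinv$, $\cdtrace$ and $\cshape$. One small remark worth making explicit: Lemma \ref{lem:ie} is stated only for $t\le r+1$, so the $H^4\leftarrow H^2$ inverse step is covered directly when $r\ge 3$; for $r=2$ the interior bound is trivial since $\Lap^2(\V-\W)\equiv 0$ on each cell. Otherwise the bookkeeping is exactly as you anticipate.
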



\begin{lemma}[Estimator error reduction]\label{lem:EstimatorReliabilityr}
Let $\mesh$ be a partition of $\Omega$ satisfying conditions \eqref{eq:sr},
let $\marked\subseteq\mesh$
and let $\mesh_\ast=\mathbf{REFINE}\,[\mesh,\marked]$.
There exists constants $\lambda\in(0,1)$ and $\Cest>0$, depending only on $\cshape$,
such that for any $\delta>0$ it holds that for any pair of finite-element splines $\V\in\X_\mesh$ and $\V_\ast\in\X_{\mesh_\ast}$
we have
\begin{equation}\label{eq:result:lem:EstimatorReliabilityr}
\eta_{\mesh_\ast}^2(\V_\ast,\Omega)
\leq(1+\delta)\left\{\eta_\mesh^2(\V,\Omega)-{\textstyle\frac{1}{2}}\eta_\mesh^2(\V,\marked)\right\}+\cshape(1+{\textstyle\frac{1}{\delta}})\enorm{\V-\V_\ast}{\mesh_\ast}^2.
\end{equation}
\end{lemma}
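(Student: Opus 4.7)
The plan is to combine the cell-wise Lipschitz property of Lemma~\ref{lem:elp} with a direct comparison of $\eta_{\mesh_\ast}(\V,\cdot)$ and $\eta_\mesh(\V,\cdot)$, exploiting that \textbf{REFINE} strictly halves the local mesh-size on cells in $\marked$. This is the two-stage Cascon--Kreuzer--Nochetto--Siebert argument adapted to the present hierarchical spline setting, and it proceeds in two independent steps whose product is the claimed bound.

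First I would eliminate $\V_\ast$. By Lemma~\ref{lem:elp} applied on $\mesh_\ast$, for every $\tau_\ast\in\mesh_\ast$ one has $\eta_{\mesh_\ast}(\V_\ast,\tau_\ast)\leq\eta_{\mesh_\ast}(\V,\tau_\ast)+\Clip\semi{\V-\V_\ast}{H^2(\omega_{\tau_\ast})}$. Squaring with Young's inequality at parameter $\delta>0$ and summing over $\tau_\ast\in\mesh_\ast$, the finite-intersection clause of the shape-regularity hypothesis \eqref{eq:sr} collapses the sum of local $H^2$-seminorms into $\cshape\,\semi{\V-\V_\ast}{H^2(\Omega)}^2$, which is in turn absorbed into $\enorm{\V-\V_\ast}{\mesh_\ast}^2$ via the bound $\norm{\Lap\cdot}{L^2(\Omega)}^2\leq\enorm{\cdot}{\mesh_\ast}^2$ combined with standard inverse estimates on the spline space. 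This delivers
$$\eta_{\mesh_\ast}^2(\V_\ast,\Omega)\leq(1+\delta)\,\eta_{\mesh_\ast}^2(\V,\Omega)+\cshape\bigl(1+\tfrac{1}{\delta}\bigr)\enorm{\V-\V_\ast}{\mesh_\ast}^2.$$

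Second I would compare $\eta_{\mesh_\ast}^2(\V,\Omega)$ to $\eta_\mesh^2(\V,\Omega)$ ancestor by ancestor. For an unmarked $\tau\in\mesh$, its descendants $\tau_\ast\subset\tau$ in $\mesh_\ast$ satisfy $h_{\tau_\ast}\leq h_\tau$, so the volume residual does not grow; the perimeter-edge contributions on sub-edges of $\sigma\subset\partial\tau$ are bounded by the originals $h_\sigma^3\norm{J_{\sigma,1}}{L^2(\sigma)}^2$ and $h_\sigma\norm{J_{\sigma,2}}{L^2(\sigma)}^2$ because squared $L^2$-jumps sum additively over sub-edges while the $h$-weights only shrink; and any \emph{new} interior edges of $\mesh_\ast$ lying inside $\tau$ contribute zero, since $\V\in\X_\mesh$ restricts to a single polynomial on $\tau$. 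Hence $\sum_{\tau_\ast\subset\tau}\eta_{\mesh_\ast}^2(\V,\tau_\ast)\leq\eta_\mesh^2(\V,\tau)$. If instead $\tau\in\marked$, its children satisfy $h_{\tau_\ast}\leq h_\tau/2$, so the volume term picks up a factor of at most $(1/2)^4=1/16$, the $h_\sigma^3$ edge term at most $(1/2)^3=1/8$, and the $h_\sigma$ edge term at most $1/2$; new interior edges again contribute zero, and the worst of these reduction factors gives $\sum_{\tau_\ast\subset\tau}\eta_{\mesh_\ast}^2(\V,\tau_\ast)\leq\tfrac{1}{2}\eta_\mesh^2(\V,\tau)$. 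Summing over $\tau\in\mesh$ produces
$$\eta_{\mesh_\ast}^2(\V,\Omega)\leq\eta_\mesh^2(\V,\Omega)-\tfrac{1}{2}\eta_\mesh^2(\V,\marked),$$
and inserting this bound into the first step yields the claimed inequality.

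The main obstacle, I expect, is the zero-jump claim for the new interior edges of $\mesh_\ast$ created inside a refined ancestor cell $\tau\in\mesh$; it rests on $\V\in\X_\mesh\subset\cal{P}_\mesh^r$ being a single polynomial on $\tau$, so that both $\jump{\Lap\V}{\sigma_\ast}$ and $\jump{\partial_\nu\Lap\V}{\sigma_\ast}$ vanish across any such $\sigma_\ast\subset\tau$. A secondary technical wrinkle is the discrete seminorm equivalence $\semi{\V-\V_\ast}{H^2(\Omega)}^2\lapprox\enorm{\V-\V_\ast}{\mesh_\ast}^2$ invoked in step one, whose proportionality constants can be absorbed into $\cshape$. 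All remaining manipulations are routine weighted $L^2$ comparisons standard in AFEM analyses.
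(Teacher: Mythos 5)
Your two-step decomposition --- the Lipschitz property of Lemma~\ref{lem:elp} on the refined mesh to trade $\V_\ast$ for $\V$, followed by an ancestor-by-descendant comparison to extract the $\frac{1}{2}$ reduction on marked cells --- is indeed the standard Cascon--Kreuzer--Nochetto--Siebert argument, and your step two is sound: the zero-jump observation for the new interior edges $\sigma_\ast$ created inside a refined ancestor (valid because $\V|_\tau$ is a single polynomial) and the factor accounting $\min\{(1/2)^4,(1/2)^3,1/2\}=1/2$ both check out.

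The gap is in step one, at the passage from $\cshape\Clip^2\bigl(1+\tfrac{1}{\delta}\bigr)\semi{\V-\V_\ast}{H^2(\Omega)}^2$ to $\cshape\bigl(1+\tfrac{1}{\delta}\bigr)\enorm{\V-\V_\ast}{\mesh_\ast}^2$. You attribute this to ``$\norm{\Lap\cdot}{L^2(\Omega)}^2\leq\enorm{\cdot}{\mesh_\ast}^2$ combined with standard inverse estimates on the spline space,'' but that sketch does not close. For $\W:=\V-\V_\ast\in H^2(\Omega)\setminus H^2_0(\Omega)$ the identity $\semi{\W}{H^2(\Omega)}=\norm{\Lap\W}{L^2(\Omega)}$ fails --- already $\W=xy$ is Laplace-harmonic on a cell while $\W_{xy}\not\equiv 0$ --- and no interior inverse estimate can recover the mixed second derivatives that $\Lap$ forgets; the shortfall is exactly what the boundary stabilization terms must supply. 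What is actually needed is the integration-by-parts identity $\semi{\W}{H^2(\Omega)}^2 = \norm{\Lap\W}{L^2(\Omega)}^2 - 2\int_\Gamma \W_x\bigl(\W_{yy}n_1-\W_{xy}n_2\bigr)$, valid here because the $C^1$-continuity of splines annihilates the would-be interior-edge jump contributions, after which the boundary integral must be reduced, edge by edge, via \emph{tangential} one-dimensional inverse estimates on the traces, to a multiple of $\norm{\W}{3/2,\mesh_\ast}^2+\left\norm{\dfrac{\W}{\nu}\right}{1/2,\mesh_\ast}^2$. That yields $\semi{\W}{H^2(\Omega)}^2\leq\norm{\Lap\W}{L^2(\Omega)}^2+C\bigl(\norm{\W}{3/2,\mesh_\ast}^2+\left\norm{\dfrac{\W}{\nu}\right}{1/2,\mesh_\ast}^2\bigr)\leq\enorm{\W}{\mesh_\ast}^2$ provided $\gamma_1,\gamma_2\geq C$, which is a genuine auxiliary lemma and not a secondary wrinkle. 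Absent this, the right-hand side of your step-one inequality carries $\semi{\V-\V_\ast}{H^2(\Omega)}^2$ rather than $\enorm{\V-\V_\ast}{\mesh_\ast}^2$, and the stated conclusion does not follow.
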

In what follows we establish estimates that allows us to compare two spline solutions on different admissible meshes. This replaces the unavailable Galerkin Pythagorus which the confomrning formulation enjoyed.
\begin{lemma}[Mesh perturbation]\label{lem:MP}
Let $\mesh$ and $\mesh_\ast$ be successive partitions satisfying conditions \eqref{eq:sr} which are obtained by $\ms{REFINE}$.
Then for a constant $\Ccomp>0$, depending only on $\cshape$,
we have for any $\delta>0$
\begin{equation}
\app(\v,\v)\leq(1+4\delta\Ccoer)\ap(\v,\v)+\frac{\Ccomp}{\delta}\left(\gamma_1\norm{\v}{3/2,\mesh}^2+\gamma_2\left\norm{\textstyle\dfrac{\v}{\nu}\right}{1/2,\mesh}^2\right),
\end{equation}
holding for every function $\v\in H^2(\Omega)$.
\end{lemma}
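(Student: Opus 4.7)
The plan is to subtract $\ap(v,v)$ from $\app(v,v)$: the interior piece $a(v,v) = \norm{\Lap v}{L^2(\Omega)}^2$ is mesh-independent and cancels, leaving only boundary integrals of two types---two \emph{projection-difference} integrals involving $(\Pipp - \Pip)(\Lap v)$, and two \emph{stabilization-weight} integrals $\gamma_i\int_\Gamma(h_{\Gamma,\ast}^{-s_i} - h_\Gamma^{-s_i})(\cdot)^2$ with $s_i \in \{1,3\}$. Each of these will be bounded by a small multiple of $\norm{\Lap v}{L^2(\Omega)}^2$ plus the $\mesh$-stabilization seminorms; the $\Lap v$ piece is then absorbed into $\ap(v,v)$ through coercivity (Lemma \ref{lem:csvnbf}).

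For the two stabilization differences, I would split $\bedges_\mesh$ into edges that are not refined by $\ms{REFINE}$, where $h_{\Gamma,\ast} = h_\Gamma$ pointwise and the contribution is exactly zero, and edges $\sigma\in\bedges_\mesh$ that are subdivided into children $\sigma_\ast\subset\sigma$ in $\bedges_{\mesh_\ast}$. The graded shape-regularity in \eqref{eq:sr} forces a bounded refinement ratio $h_{\sigma_\ast} \ge c_\ast h_\sigma$ with $c_\ast>0$ depending only on $\cshape$, hence $h_{\sigma_\ast}^{-s_i} \le C h_\sigma^{-s_i}$, and a summation over children bounds the two terms by $\gamma_1\norm{v}{3/2,\mesh}^2$ and $\gamma_2\norm{\dfrac{v}{\nu}}{1/2,\mesh}^2$ respectively. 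This is already of the form required on the right-hand side.

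For the two projection-difference integrals, I would apply Young's inequality with a free parameter $\varepsilon>0$, weighted by $h_\Gamma^{1/2}$ (respectively $h_\Gamma^{3/2}$), so that the factors involving $v$ and $\dfrac{v}{\nu}$ come out exactly as $\norm{v}{3/2,\mesh}^2$ and $\norm{\dfrac{v}{\nu}}{1/2,\mesh}^2$. The remaining factors
\[
h_\sigma\norm{(\Pipp-\Pip)(\Lap v)}{L^2(\sigma)}^2 \quad\text{and}\quad h_\sigma^3\left\norm{\dfrac{(\Pipp-\Pip)(\Lap v)}{\nu}\right}{L^2(\sigma)}^2
\]
are handled by the triangle inequality followed by Lemma \ref{lem:L2estimates} applied separately to $\Pip$ on the coarse cell $\tau\supset\sigma$ and to $\Pipp$ on the child cells; summation over $\bedges_\mesh$ together with the finite-intersection property of \eqref{eq:sr} returns an upper bound $\lesssim\norm{\Lap v}{L^2(\Omega)}^2$.

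Collecting the four contributions produces
\[
\app(v,v) - \ap(v,v) \le C_1\varepsilon\norm{\Lap v}{L^2(\Omega)}^2 + \left(\tfrac{C_2}{\varepsilon} + C_3\right)\left(\gamma_1\norm{v}{3/2,\mesh}^2 + \gamma_2\left\norm{\dfrac{v}{\nu}\right}{1/2,\mesh}^2\right).
\]
Using $\norm{\Lap v}{L^2(\Omega)}^2 \le \Ccoer^{-1}\ap(v,v)$ from Lemma \ref{lem:csvnbf} and choosing $\varepsilon$ so that $C_1\varepsilon\Ccoer^{-1} = 4\delta\Ccoer$ gives exactly the stated factor $(1+4\delta\Ccoer)\ap(v,v)$, while the remainder $1/\varepsilon\sim 1/(\delta\Ccoer^2)$ is consolidated with $C_2,C_3$ and the shape-regularity constants into $\Ccomp/\delta$. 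The hard part is the projection-difference step: $\Pipp$ lives on the finer partition but must be estimated on a boundary edge $\sigma\in\bedges_\mesh$ of the coarser partition, so one has to track Lemma \ref{lem:L2estimates} over the child edges $\sigma_\ast\subset\sigma$ and use the bounded refinement ratio to obtain constants depending only on $\cshape$.
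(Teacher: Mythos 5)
Your proposal is correct and follows essentially the same route as the paper: isolate the boundary-integral difference $\app(\v,\v)-\ap(\v,\v)$, bound the Nitsche consistency terms by $\norm{\Lap\v}{L^2(\Omega)}$ times the mesh-dependent boundary seminorms via Lemma~\ref{lem:L2estimates}, apply Young's inequality, convert $\mesh_\ast$-seminorms to $\mesh$-seminorms by the bounded refinement ratio, and absorb $\norm{\Lap\v}{L^2(\Omega)}^2$ through coercivity. Two organizational differences are worth noting, both of which are mild improvements rather than departures: you treat the projection terms as a single difference $(\Pipp-\Pip)(\Lap\v)$ before splitting by the triangle inequality, whereas the paper keeps the $\Pip$ and $\Pipp$ groups apart from the outset; and you observe that the stabilization-weight difference vanishes identically on unrefined boundary edges, which makes it clearer that only the refined edges contribute and that the graded property is precisely what gives a $\cshape$-dependent constant. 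One caution: when you invoke Lemma~\ref{lem:L2estimates} for $\Pipp$ on a coarse boundary edge $\sigma\in\bedges_\mesh$, you must decompose $\sigma$ into its children $\sigma_\ast\in\bedges_{\mesh_\ast}$, apply the lemma edge-by-edge with $h_{\sigma_\ast}$, and then trade $h_{\sigma_\ast}^{-1}$ for $\cshape h_\sigma^{-1}$ — you flag this correctly as "the hard part," and the graded condition in \eqref{eq:sr} is exactly what closes it. Both the paper and your writeup arrive at the same constants structure; the paper's statement of $\norm{\v}{3/2,\mesh_\ast}\leq\cshape^{-1}\norm{\v}{3/2,\mesh}$ has the exponent on $\cshape$ reversed (a fine-to-coarse comparison should carry a constant $\geq1$), and your version with $h_{\sigma_\ast}^{-s}\leq Ch_\sigma^{-s}$ is the correct reading.
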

\begin{proof}
%
Given any $\v\in H^2(\Omega)$ we write
\begin{equation}\label{eq1:lem:MP}
\begin{split}
\app(\v,\v)=&\ap(\v,\v)+2\bigg(\int_\Gamma\Pip(\Lap\v){\textstyle\dfrac{\v}{\nu}}
-\int_\Gamma{\textstyle\dfrac{\Pip(\Lap\v)}{\nu}}\v\bigg)
-\gamma_1\left(\norm{\v}{\mesh,3/2}^2-\norm{\v}{\mesh_\ast,3/2}^2\right)\\
&-2\bigg(\int_\Gamma\Pipp(\Lap\v){\textstyle\dfrac{\v}{\nu}}
-\int_\Gamma{\textstyle\dfrac{\Pipp(\Lap\v)}{\nu}}\v\bigg)
-\gamma_2\left(\left\norm{\textstyle\dfrac{\v}{\nu}\right}{\mesh,1/2}^2
-\left\norm{\textstyle\dfrac{\v}{\nu}\right}{\mesh_\ast,1/2}^2\right).
\end{split}
\end{equation}
%
Look at the boundary integral terms depending on $\mesh$. Let $\sigma\in\bedges_\mesh$ an edge to some cell $\tau\in\mesh$, 
\begin{equation}\label{eq2:lem:MP}
\begin{split}
\int_\sigma\Pip(\Lap\v)\dfrac{\v}{\n_\sigma}&\leq\norm{\Pip(\Lap\v)}{\sigma}\left\norm{\textstyle\dfrac{\v}{\n_\sigma}\right}{\sigma}
\leq\cdtrace\cproj h_\sigma^{-1/2}\norm{\Lap\v}{\tau}\left\norm{\textstyle\dfrac{\v}{\n_\sigma}\right}{\sigma}.
\end{split}
\end{equation}
%
Summing \eqref{eq2:lem:MP} over all $\sigma\in\bedges_\mesh$ and an application of Schwarz's inequality on the summation would give
\begin{equation}
\begin{split}
\bigg|\int_\Gamma\Pip(\Lap\v){\textstyle\dfrac{\v}{\nu}}\bigg|&
\lapprox\bigg(\sum_{\sigma\in\bedges_\mesh}h_\sigma^{-1}\left\norm{\textstyle\dfrac{\v}{\n_\sigma}\right}{\sigma}^2\bigg)^{1/2}
\bigg(\sum_{\tau\in\mesh:\diff\tau\cap\Gamma\neq\emptyset}\norm{\Lap\v}{\tau}^2\bigg)^{1/2}\\
&\leq\left\norm{\textstyle\dfrac{\v}{\nu}\right}{\mesh,1/2}\norm{\Lap\v}{L^2(\Omega)}.
\end{split}
\end{equation}
Similarly, using the inverse-estimate $\norm{\dfrac{\Pip(\Lap\v)}{\n_\sigma}}{\sigma}\leq\cinv h_\sigma^{-1}\norm{\Pip(\Lap\v)}{\sigma}$, we obtain
\begin{equation}
\bigg|\int_\Gamma{\textstyle\dfrac{\Pip(\Lap\v)}{\nu}}\v\bigg|\leq\cdtrace\cinv\cproj\norm{\v}{\mesh,3/2}\norm{\Lap\v}{L^2(\Omega)}.
\end{equation}
%
We carry the same reasoning for the remaining boundary integral.
Employing Young's inequality with $\delta>0$ we arrive at
\begin{equation}\label{eq3:lem:MP}
\begin{split}
\app(\v,\v)&\lapprox\ap(\v,\v)
+4\delta\norm{\Lap\v}{L^2(\Omega)}^2
+\left(\textstyle\frac{1}{\delta}+\gamma_1\right)\norm{\v}{\mesh,3/2}^2
+\left(\textstyle\frac{1}{\delta}+\gamma_1\right)\norm{\v}{\mesh_\ast,3/2}^2\\
&+\left(\textstyle\frac{1}{\delta}+\gamma_2\right)\left\norm{\textstyle\dfrac{\v}{\nu}\right}{\mesh,1/2}^2
+\left(\textstyle\frac{1}{\delta}+\gamma_2\right)\left\norm{\textstyle\dfrac{\v}{\nu}\right}{\mesh_\ast,1/2}^2.
\end{split}
\end{equation}
%
With the fact that $h_\sigma\leq\cshape h_{\sigma_\ast}$, with $\sigma\in\bedges_\mesh$ and $\sigma_\ast\in\bedges_{\mesh_\ast}$, we infer that $\norm{\v}{3/2,\mesh_\ast}\leq\cshape^{-1}\norm{\v}{\mesh,3/2}$ and $\norm{\dfrac{\v}{\nu}}{1/2,\mesh_\ast}\leq\cshape^{-1}\norm{\dfrac{\v}{\nu}}{1/2,\mesh}$.
\begin{equation}
\left(\textstyle\frac{1}{\delta}+\gamma_1\right)\left(\norm{\v}{\mesh_\ast,3/2}^2
+\norm{\v}{\mesh_\ast,3/2}^2\right)\leq\frac{\Ccomp\gamma_1}{\delta}\norm{\v}{\mesh,3/2}^2,
\end{equation}
where $\Ccomp>0$ is an appropriate proportionality parameter that depends on $\cshape$.
A similar argument holds for terms including boundary norms of $\dfrac{\v}{\nu}$.
\end{proof}
\begin{lemma}[Comparison of solutions]\label{lem:cos}
Let $\mesh$ and $\mesh_\ast$ be successive admissible partitions obtained by $\ms{REFINE}$
and let $\U\in\Xp$ and $\U_\ast\in\X_{\mesh_\ast}$ be the finite-element spline solutions to \eqref{eq:ndp}.
Then we have for any $\eps>0$
\begin{equation}\label{eq:result:lem:cos}
\begin{split}
\app(\epp,\epp)\leq&(1+\eps)\ap(\ep,\ep)
-\frac{\Ccoer}{2}\enorm{\upp-\up}{\mesh_\ast}^2
+\frac{C_\rm{Comp}}{\eps\gamma}\eta_\mesh^2
\end{split}
\end{equation}
\end{lemma}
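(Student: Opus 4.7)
The plan is to derive a Galerkin-Pythagoras analogue for Nitsche's formulation despite two obstructions: the lack of full Galerkin orthogonality (inconsistency) and the mesh dependence of the bilinear form ($\ap \neq \app$). I proceed by an algebraic expansion of $\app(\epp,\epp)$, extract the cancellation available from the partial orthogonality \eqref{eq:PartialGalerkin}, handle the surviving cross term by Young's inequality in a way that preserves the $(1+\eps)$ factor on $\ap(\ep,\ep)$, and finally transfer everything to $\ap$ via Lemma~\ref{lem:MP} and Lemma~\ref{lem:boundarycontrol}.

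Starting from $\epp = \ep - (\upp-\up)$ and symmetry of $\app$, bilinear expansion followed by the substitution $\ep = \epp + (\upp-\up)$ in one of the cross terms yields
\[
\app(\epp,\epp) = \app(\ep,\ep) - 2\app(\epp,\upp-\up) - \app(\upp-\up,\upp-\up).
\]
Decomposing $\upp-\up = W^0 + W^\perp$ in $\Xpp^0 \oplus \Xpp^\perp$, the partial Galerkin orthogonality applied to $W^0 \in \Xpp \cap H^2_0(\Omega)$ gives $\app(\epp, W^0) = 0$, so the cross term reduces to $\app(\epp, W^\perp)$; coercivity (Lemma~\ref{lem:csvnbf}) supplies $\app(\upp-\up,\upp-\up) \geq \Ccoer\enorm{\upp-\up}{\mesh_\ast}^2$.

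Continuity (Lemma~\ref{lem:ctynbf}) combined with the triangle inequality $\enorm{\epp}{\mesh_\ast}^2 \leq 2\enorm{\ep}{\mesh_\ast}^2 + 2\enorm{\upp-\up}{\mesh_\ast}^2$, coercivity on $\ep$, and Young's inequality with parameter $\eps'=\Ccoer\eps/2$ produces
\[
\app(\epp,\epp) \leq (1+\eps)\app(\ep,\ep) - \tfrac{\Ccoer}{2}\enorm{\upp-\up}{\mesh_\ast}^2 + \tfrac{2\Ccont^2}{\Ccoer\eps}\enorm{W^\perp}{\mesh_\ast}^2.
\]
The $W^\perp$ term is controlled by the norm equivalence of Lemma~\ref{lem:PerpNormEquiv}: since $W^0$ and $\partial_\nu W^0$ vanish on $\Gamma$, the boundary data of $W^\perp$ coincides with that of $\upp - \up$; applying Lemma~\ref{lem:boundarycontrol} on $\mesh_\ast$ together with estimator monotonicity $\eta_{\mesh_\ast}(\upp)\leq\eta_\mesh(\up)$ and the shape-regularity relation $h_{\sigma_\ast}^{-1} \lapprox \cshape\,h_\sigma^{-1}$ yields $\enorm{W^\perp}{\mesh_\ast}^2 \lapprox \gamma^{-1}\eta_\mesh^2(\U,\Omega)$.

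Finally, Lemma~\ref{lem:MP} with parameter $\delta$ replaces $\app(\ep,\ep)$ by $(1+4\delta\Ccoer)\ap(\ep,\ep)$ plus $\mesh$-boundary norms of $\ep$, which reduce to boundary norms of $-\up$ because $\u\in H^2_0(\Omega)$ and are bounded by $\lapprox \gamma^{-1}\eta_\mesh^2$ via Lemma~\ref{lem:boundarycontrol}. A joint rescaling of the Young and perturbation parameters in a single $\eps$ collects all residual $\eta_\mesh^2$ contributions into the composite constant $\tfrac{C_\rm{Comp}}{\eps\gamma}$. The main obstacle is the Young calibration in step three: absorbing $\enorm{\epp}{\mesh_\ast}^2$ directly into $\app(\epp,\epp)$ would inflate the coefficient of $\ap(\ep,\ep)$ beyond $1+\eps$, so one must split $\enorm{\epp}{\mesh_\ast}^2 \leq 2\enorm{\ep}{\mesh_\ast}^2 + 2\enorm{\upp-\up}{\mesh_\ast}^2$ and sacrifice half of $\Ccoer\enorm{\upp-\up}{\mesh_\ast}^2$, which is the origin of the factor $\tfrac{1}{2}$ in the statement.
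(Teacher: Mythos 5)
Your argument is correct and follows the same overall strategy as the paper: algebraic expansion, killing the $H^2_0$-component of the discrete increment via partial Galerkin, extracting the negative $\enorm{\upp-\up}{\mesh_\ast}^2$ term via coercivity, bounding the surviving perp-part through Lemma~\ref{lem:PerpNormEquiv} and Lemma~\ref{lem:boundarycontrol}, and finally transferring $\app$ to $\ap$ with Lemma~\ref{lem:MP}. The one place you diverge is the choice of decomposition and the downstream Young calibration. The paper sets $E_\ast^0=\U^0_\ast-\U^0$ and $E_\ast^\perp=\U^\perp_\ast-\U^\perp$ (a cross-level split that is not the orthogonal decomposition of $\U_\ast-\U$ at level $\mesh_\ast$), and after two applications of partial Galerkin rewrites $\epp+E_\ast^0 = \ep-E_\ast^\perp$, which converts the cross term to $\app(\ep,E_\ast^\perp)$; Young's inequality then touches only $\app(\ep,\ep)$ and $\enorm{E_\ast^\perp}{\mesh_\ast}$, and the $\tfrac12$ arises from $\enorm{E_\ast^0}{\mesh_\ast}^2\ge\tfrac12\enorm{\U_\ast-\U}{\mesh_\ast}^2-\enorm{E_\ast^\perp}{\mesh_\ast}^2$. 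You instead keep the orthogonal decomposition at level $\mesh_\ast$ and accept the cross term $\app(\epp,W^\perp)$, which forces the triangle split $\enorm{\epp}{\mesh_\ast}^2\le2\enorm{\ep}{\mesh_\ast}^2+2\enorm{W}{\mesh_\ast}^2$; this also works, and in fact the Young parameter $\alpha=\eps\Ccoer/4$ yields $-\Ccoer(1-\eps)\enorm{W}{\mesh_\ast}^2$, which dominates $-\tfrac{\Ccoer}{2}\enorm{W}{\mesh_\ast}^2$ for $\eps\le\tfrac12$ (and the claim is vacuously weaker for larger $\eps$). Your closing remark that the $\tfrac12$ arises from sacrificing half of the coercivity term is not quite how the constant actually falls out of your calibration (you sacrifice $O(\eps)$, not $\tfrac12$), nor how the paper obtains it, but this does not affect the validity of the bound. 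Everything else — the identification of the boundary trace of $W^\perp$ with that of $\upp-\up$, the use of Lemma~\ref{lem:boundarycontrol} on both levels with shape-regularity comparability, the estimator monotonicity step, and the final transfer via Lemma~\ref{lem:MP} using $\u\in H^2_0(\Omega)$ — matches the paper's argument.
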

\begin{proof}
We follow the following abbreviation. Let $e=\u-\U$, let $e_\ast=\u-\U_\ast$, let $E_\ast^0=\U^0_\ast-\U^0$, and let $E_\ast^\perp=\U^\perp_\ast-\U^\perp$. Partial Galerkin implies
\begin{equation}
\begin{split}
\app(\e_{\ast},\e_\ast)&=\app(\e_\ast,\e_\ast+E^0_\ast)
=\app(\e_\ast+E^0_\ast,\e_\ast+E^0_\ast)-\app(E_\ast^0,\e_\ast+E_\ast^0)
\end{split}
\end{equation}
and Partial Galerkin and symmetry again we have
\begin{equation}
\app(\e_{\ast},\e_\ast)=\app(\e_\ast+E^0_\ast,\e_\ast+E^0_\ast)-\app(E_\ast^0,E_\ast^0)
\end{equation}
Rewriting $\U_\ast-E_\ast^0=\U-E_\ast^\perp$ we can express $e_\ast+E_\ast^0=e-E_\ast^\perp$ and therefore
\begin{equation}
\app(e_\ast+E_\ast^0,e_\ast+E_\ast^0)
=\app(e,e)-2\app(e,E\ast^\perp)+\app(E_\ast^\perp,E_\ast^\perp)
\end{equation}
We then have
\begin{equation}
\begin{split}
\app(\e_{\ast},\e_\ast)&=\app(e,e)-2\app(e,E\ast^\perp)+\app(E_\ast^\perp,E_\ast^\perp)
-\app(E_\ast^0,E_\ast^0)
\end{split}
\end{equation}
Employ Young's inequality
\begin{equation}
\app(e,e)-2\app(e,E_\ast^\perp)\leq(1+\delta)\app(e,e)+\frac{\Ccont^2}{\delta\Ccoer}\enorm{E_\ast^\perp}{P_\ast}^2
\end{equation}
Writing $E_\ast^0=E_\ast-E_\ast^\perp$ and with $\enorm{E_\ast}{P_\ast}^2\leq2\enorm{E^0_\ast}{P_\ast}^2+2\enorm{E_\ast}{P_\ast}^2$ makes $\enorm{E_\ast^0}{\mesh_\ast}^2\ge\frac{1}{2}\enorm{E_\ast}{\mesh_\ast}^2-\enorm{E_\ast^\perp}{\mesh_\ast}^2$ and
\begin{equation}
\begin{split}
\app(E_\ast^\perp,E_\ast^\perp)-\app(E_0^\ast,E_\ast^0)&\leq\Ccont\enorm{E_\ast^\perp}{P_{\ast}}^2-\Ccoer\enorm{E_\ast^0}{\mesh_\ast}^2\\
&\leq-\frac{\Ccoer}{2}\enorm{E_\ast}{\mesh_\ast}^2+C_4\enorm{E_\ast^\perp}{P_\ast}^2
\end{split}
\end{equation}
where $C_4=\Ccoer+\Ccont$
We therefor have, with $C_5=\max\{C_4,\frac{\Ccont^2}{\Ccoer}\}$
\begin{equation}
\begin{split}
\app(\e_{\ast},\e_\ast)&\leq
(1+\delta)\app(e,e)-\frac{\Ccoer}{2}\enorm{E_\ast}{\mesh_\ast}^2+C_5\left(1+\frac{1}{\delta}\right)\enorm{E_\ast^\perp}{P_\ast}^2
\end{split}
\end{equation}
Using the fact that edge sizes between two consequetive refinement steps are comparable and \eqref{eq:lem:PerpNormEquiv}
\[
\enorm{E_\ast^\perp}{\mesh_\ast}^2\lapprox\semi{\U^\perp_\ast}{\mesh_\ast}^2+\semi{\U^\perp}{\mesh}^2
\lapprox\frac{\Ccoer^{-1}}{\gamma}\left(\eta^2_{\mesh_\ast}(\Omega)+\eta^2_{\mesh}(\Omega)\right)
\]
In view of Lemma \ref{lem:MP}, for the same $\delta>0$ above, and Lemma \eqref{eq:lem:boundarycontrol}
\begin{equation}
\begin{split}
\app(e,e)\leq&(1+4\delta\Ccoer)\ap(e,e)+\frac{\Ccomp\Ccoer^{-1}}{\delta\gamma}\eta^2_\mesh(\Omega).
\end{split}
\end{equation}
%
Summing up
\begin{equation}
\begin{split}
\app(\e_\ast,\e_\ast)&\leq(1+C\delta)\ap(e,e)-\frac{\Ccoer}{2}\enorm{E_\ast}{\mesh_\ast}^2+\frac{C_\rm{Comp}}{\delta\gamma}\left(\eta^2_{\mesh_\ast}(\Omega)+\eta^2_{\mesh}(\Omega)\right),
\end{split}
\end{equation}
where $C$ and $C_\rm{Comp}$ depend on $\Ccoer$ and $\Ccont$.

\end{proof}

\begin{theorem}[Convergence of Nitsche's AFEM]\label{thm:nc}
Given $f\in L^2(\Omega)$ and Dolfer parameter $\theta\in(0,1]$, there exists $\gamma_C(\theta)>0$, a contractive factor $\alpha\in(0,1)$ and a constant $\Cest>0$,  such that for all $\gamma\ge\gamma_C$ the adaptive procedure $\mathbf{AFEM}\,[\mesh,f,\theta]$ with produce two successive solutions $\U\in\Xp$ and $\U_\ast\in\Xpp$ to problem \eqref{eq:ndp} for which
\begin{equation}\label{eq:result:thm:ConvNit}
\app+\Cest\etapp^2\leq\alpha\left(\ap
+\Cest\etap^2\right).
\end{equation}
\end{theorem}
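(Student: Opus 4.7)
The plan is to combine the estimator reduction of Lemma \ref{lem:EstimatorReliabilityr}, the quasi-orthogonal comparison of solutions of Lemma \ref{lem:cos}, and the reliability of Corollary \ref{cor:aposteriori} to extract a contraction factor, in the spirit of the Cascón--Kreuzer--Nochetto--Siebert argument. I first apply Lemma \ref{lem:EstimatorReliabilityr} with $\V = \U$ and $\V_\ast = \U_\ast$ and invoke D\"orfler marking $\etap^2(\U,\marked) \ge \theta\etap^2$ to get
\begin{equation}
\etapp^2 \le (1+\delta)\bigl(1 - \tfrac{\theta}{2}\bigr)\etap^2 + \cshape\bigl(1 + \tfrac{1}{\delta}\bigr)\enorm{\upp - \up}{\mesh_\ast}^2.
\end{equation}
Multiplying by $\Cest > 0$ and adding to the comparison bound from Lemma \ref{lem:cos} yields, with the shorthand $q_\delta := (1+\delta)(1-\theta/2)$,
\begin{equation}
\app(\epp,\epp) + \Cest\etapp^2 \le (1+\eps)\ap(\ep,\ep) + \Bigl[\Cest q_\delta + \tfrac{\Ccomp}{\eps\gamma}\Bigr]\etap^2 + \Bigl[\Cest\cshape\bigl(1+\tfrac{1}{\delta}\bigr) - \tfrac{\Ccoer}{2}\Bigr]\enorm{\upp - \up}{\mesh_\ast}^2.
\end{equation}

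The next move is the absorption. I would fix $\delta > 0$ small enough that $q_\delta < 1$, and then demand $\Cest \le \Ccoer/[2\cshape(1+1/\delta)]$ so that the bracket multiplying $\enorm{\upp - \up}{\mesh_\ast}^2$ is non-positive and can be dropped. To produce a contraction in the full quantity $\ap(\ep,\ep) + \Cest\etap^2$, I would then invoke reliability $\ap(\ep,\ep) \le \Crel\etap^2$ from Corollary \ref{cor:aposteriori} and trade a slice of the energy error for the estimator: for any $\alpha \in (q_\delta, 1+\eps)$,
\begin{equation}
(1+\eps)\ap(\ep,\ep) = \alpha\,\ap(\ep,\ep) + [(1+\eps)-\alpha]\,\ap(\ep,\ep) \le \alpha\,\ap(\ep,\ep) + [(1+\eps)-\alpha]\Crel\,\etap^2.
\end{equation}
Substituting delivers the target estimate \eqref{eq:result:thm:ConvNit} provided the combined coefficient of $\etap^2$ is at most $\alpha\Cest$, i.e.
\begin{equation}
[(1+\eps)-\alpha]\Crel + \tfrac{\Ccomp}{\eps\gamma} \le (\alpha - q_\delta)\Cest.
\end{equation}

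It remains to pick the four constants consistently. Given $\theta$, I would first fix $\delta$ so that $q_\delta<1$, then select $\alpha \in (q_\delta,1)$ and $\eps > 0$ small with $\alpha < 1+\eps$. The inconsistency penalty $\Ccomp/(\eps\gamma)$ can be made arbitrarily small by enlarging $\gamma$, which creates the slack required so that a $\Cest$ satisfying both the lower bound from the displayed inequality above and the upper bound from absorption actually exists; the threshold obtained this way is the announced $\gamma_C(\theta)$. The main obstacle of the proof is precisely this simultaneous juggling of $\eps$, $\delta$, $\Cest$ and $\gamma$: the absorption cap on $\Cest$ competes with the reliability-induced lower bound, and it is only the freedom to inflate $\gamma$ (suppressing the $\Ccomp/(\eps\gamma)$ term coming from the mesh-perturbation/inconsistency in Lemma \ref{lem:cos}) that closes the gap. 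Once the parameters are locked, the resulting $\alpha \in (q_\delta,1) \subset (0,1)$ is the sought-after contraction factor and the theorem is established.
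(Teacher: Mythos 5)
Your proof is correct and takes essentially the same route as the paper: both combine Lemma~\ref{lem:cos}, Lemma~\ref{lem:EstimatorReliabilityr}, D\"orfler marking, and the reliability estimate of Corollary~\ref{cor:aposteriori} in the Casc\'on--Kreuzer--Nochetto--Siebert pattern, with the inconsistency penalty $\Ccomp/(\eps\gamma)$ absorbed by taking $\gamma$ large. The only cosmetic differences are that the paper fixes $\Cest$ to cancel the $\enorm{\upp-\up}{\mesh_\ast}^2$ terms exactly rather than merely dominating them, and invokes reliability by splitting $\etap^2=\tfrac12\etap^2+\tfrac12\etap^2\ge\tfrac12\etap^2+\tfrac{1}{2\Crel}\ap$ (yielding explicit closed-form choices of $\eps,\delta,\gamma_C$), whereas you split $(1+\eps)\ap=\alpha\ap+[(1+\eps)-\alpha]\ap$ and then bound the remainder by $\Crel\etap^2$ -- algebraically the same trade, just carried on the other side of the inequality.
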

\begin{proof}
Adopt the following abbreviations:
\begin{eqnarray}
\a_\mesh=\a_\mesh(\u-\U,\u-\U),&E_\ast=\enorm{\U-\U_\ast}{\mesh_\ast},\\
\eta_\mesh=\eta_{\mesh}(\U,\mesh),&\eta_\mesh(\marked)=\eta_{\mesh}(\U,\marked).
\end{eqnarray}
Let $\Cest^{-1}=\cshape(1+\frac{1}{\delta})\frac{2}{\Ccoer}$.
In view of Lemma \ref{lem:cos},
\begin{equation}\label{eq1:thm:nc}
\app+\Cest\etapp^2\leq(1+\eps)\ap-{\textstyle\frac{\Ccoer}{2}}E_\ast^2
+\textstyle{\frac{\Ccomp}{\eps\gamma}}\etap^2+\Cest\etapp^2.
\end{equation}
By invoking Lemma \ref{lem:EstimatorReliabilityr} on $\Cest\etapp^2$
\begin{equation}
\begin{split}
\app+\Cest\etapp^2&\leq(1+\eps)\ap
+\textstyle{\frac{\Ccomp}{\eps\gamma}}\etap^2-{\textstyle\frac{\Ccoer}{2}}E_\ast^2\\
&\quad+\Cest\left[(1+\delta)\left\{\etap^2-{\textstyle\frac{1}{2}}\etap^2(\marked)\right\}+\cshape(\textstyle1+\frac{1}{\delta})E_\ast^2\right],
\end{split}
\end{equation}
eliminates $E_\ast$ from the previous expression.
From Dorler $-\etap^2(M)\leq\theta^2\etap^2$
and in view of Corollary \ref{cor:aposteriori},
\begin{equation}
\begin{split}
\Cest(1+\delta)\left\{\etap^2-{\textstyle\frac{1}{2}}\etap^2(\marked)\right\}
&\leq\Cest(1+\delta)\etap^2-\Cest(1+\delta){\textstyle\frac{\theta^2}{2}}\etap^2\\
&\leq\Cest(1+\delta)\etap^2-\Cest(1+\delta){\textstyle\frac{\theta^2}{2}}\left(\textstyle\frac{1}{2}\etap^2+\frac{1}{2\Crel}\ap\right).
\end{split}
\end{equation}
Expression \eqref{eq1:thm:nc} now reads
\begin{equation}
\app+\Cest\etapp^2\leq\left(1+\eps-\Cest(1+\delta){\textstyle\frac{\theta^2}{4\Crel}}\right)\ap
+\left(\textstyle{\frac{\Ccomp}{\eps\gamma}}+\Cest(1+\delta)\left(1-\frac{\theta^2}{4}\right)\right)\etap^2.
\end{equation}
Noting that $\Cest(1+\delta)=\delta\frac{\Ccoer}{2\cshape}$ we arrive at
\begin{equation}
\app+\Cest\etapp^2\leq\left(1+\eps-{\textstyle\frac{\delta\theta^2\Ccoer}{8\cshape\Crel}}\right)\ap
+\Cest\left(\textstyle{\frac{\Ccomp}{\eps\gamma\Cest}}+(1+\delta)\left(1-\frac{\theta^2}{4}\right)\right)\etap^2.
\end{equation}
It what remains we verify the existence of $\eps>0,\delta>0$ and $\gamma_C(\theta)>0$ such that for all $\gamma>\gamma_C$ the factors $1+\eps-{\textstyle\frac{\delta\theta^2\Ccoer}{8\cshape\Crel}}$ and $\textstyle{\frac{\Ccomp}{\eps\gamma\Cest}}+(1+\delta)\left(1-\frac{\theta^2}{4}\right)$ are positive and less that $1$.
Let $\Lambda_1=\frac{\Ccoer}{8\cshape\Crel}$ and $\Lambda_2=\frac{2\Ccomp\cshape}{\Ccoer}$.
Then the corresponding conditions will read
\begin{equation}\label{eq1:lem:nc}
\textstyle
0<1+\eps-\delta\theta^2\Lambda_1<1
\quad\text{and}\quad
0<(1+{\textstyle\frac{1}{\delta}})\frac{\Lambda_2}{\eps\gamma}+(1+\delta)\left(\textstyle1-\frac{\theta^2}{4}\right)<1.
\end{equation}
For any $\delta>0$ let $\eps=\frac{\delta\theta^2}{2}\Lambda_1$ so that the first condition in \eqref{eq1:lem:nc} holds and let $\delta=\frac{\theta^2}{2-\theta^2}$ so that $(1+\delta)\left(\textstyle1-\frac{\theta^2}{4}\right)=1-\frac{\theta^2}{2}$ then pick $\gamma$ sufficiently large so that $(1+{\textstyle\frac{1}{\delta}})\frac{\Lambda_2}{\eps\gamma}<\frac{\theta^2}{2}$ to obtain the second relation in \eqref{eq1:lem:nc}. We note that the $\gamma_C(\theta):=\frac{2(1+\frac{1}{\delta}\Lambda_2)}{\theta^2\eps}$.
\end{proof}
\begin{remark}\label{rem:ConvergenceAlpha}
We may define contractive factor $\alpha(\delta):=\max\left\{\frac{1}{2},(1+{\textstyle\frac{1}{\delta}})\frac{\Lambda_2}{\eps\gamma}+1-\frac{\theta^2}{2}\right\}$ with the specified $\delta$ above. In combination with the $\gamma>\gamma_C(\theta)$ we also have $(1+{\textstyle\frac{1}{\delta}})\frac{\Lambda_2}{\eps\gamma}+1-\frac{\theta^2}{2}<1-c\theta^2$ for some $c$.
\end{remark}
\section{Quasi-optimlaity of AFEM}
The total-error norm is given by
\begin{equation}
\rho_\mesh(\v,\V,g)=\left(\enorm{\v-\V}{\mesh}^2+\osc_\mesh^2(g)\right)^{1/2}.
\end{equation}
The AFEM approximation class defined by the total-error norm is then given by
\begin{equation}
\bb{A}^s=\left\{\v\in H_0^2(\Omega):\sup_{N>0}N^sE_\mesh(\v)<\infty\right\},
\end{equation}
where
\begin{equation}
E_\mesh(\v)=\inf_{\V\in\Xp}\rho_\mesh(\v,\V,\cal{L}v)
,\quad\v\in H^2(\Omega).
\end{equation}
Analogously, we define the approximation class in which approximation comes from boundary conforming spline spaces by
\begin{equation}
\bb{A}^s_0=\left\{\v\in H_0^2(\Omega):\sup_{N>0}N^sE^0_\mesh(\v)<\infty\right\}
\end{equation}
where 
\begin{equation}
E_\mesh^0(\v)=\inf_{\V_0\in\Xp^0}\left(\norm{\v-\V_0}{H^2(\Omega)}^2+\osc_\mesh^2(\cal{L}\v)\right)^{1/2}
,\quad\v\in H^2_0(\Omega)
\end{equation}

\begin{lemma}[Equivalence of classes]\label{lem:ClassEquivalence}
$\bb{A}^s=\bb{A}^s_0$
\end{lemma}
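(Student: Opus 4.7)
The plan is to show the two inclusions $\bb{A}^s_0 \subseteq \bb{A}^s$ and $\bb{A}^s \subseteq \bb{A}^s_0$ by comparing the best-approximation errors $E_\mesh^0(\v)$ and $E_\mesh(\v)$ on each admissible partition $\mesh$. The easy direction is $\bb{A}^s_0 \subseteq \bb{A}^s$: for any conforming spline $\V_0 \in \Xp^0 \subseteq \Xp$, the difference $\v - \V_0$ lies in $\Ho(\Omega)$, so both of its boundary traces vanish and the Nitsche boundary terms in the mesh-dependent norm drop out, giving $\enorm{\v - \V_0}{\mesh} = \norm{\Lap(\v - \V_0)}{L^2(\Omega)} \leq \norm{\v - \V_0}{\H(\Omega)}$. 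Since the oscillation term $\osc_\mesh(\cal{L}\v)$ is identical on both sides of the comparison, infimizing over $\V_0 \in \Xp^0$ yields $E_\mesh(\v) \leq E_\mesh^0(\v)$, and the inclusion follows immediately from the definition of the classes.

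For the reverse inclusion $\bb{A}^s \subseteq \bb{A}^s_0$, given a near-optimal $\V \in \Xp$ for $E_\mesh(\v)$, I would split $\V = \V^0 + \V^\perp$ as in the paragraph preceding Lemma~\ref{lem:PerpNormEquiv}, and take $\V^0 = \pi_\mesh^0\V \in \Xp^0$ as the candidate conforming approximant. Since both $\v$ and $\V^0$ lie in $\Ho(\Omega)$, the resulting error $\v - \V^0 = (\v - \V) + \V^\perp$ again lies in $\Ho(\Omega)$, so by the Poincar\'e-type characterization of the $\H$-norm on $\Ho(\Omega)$ recalled in the notation section, $\norm{\v - \V^0}{\H(\Omega)} \lapprox \norm{\Lap(\v - \V^0)}{L^2(\Omega)}$. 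It then remains to bound the right-hand side by $\enorm{\v - \V}{\mesh}$.

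The crux is the observation that, since $\V^0$ has vanishing traces on $\Gamma$ and $\v$ does too, the boundary traces of $\V^\perp$ coincide with those of $\V$, which in turn coincide with those of $\V - \v$, so
\begin{equation*}
\norm{\V^\perp}{3/2,\mesh} = \norm{\V - \v}{3/2,\mesh}, \qquad \norm{\textstyle\dfrac{\V^\perp}{\nu}}{1/2,\mesh} = \norm{\textstyle\dfrac{(\V - \v)}{\nu}}{1/2,\mesh};
\end{equation*}
each of these quantities is controlled by $\gamma_1^{-1/2}\enorm{\v - \V}{\mesh}$ or $\gamma_2^{-1/2}\enorm{\v - \V}{\mesh}$ directly from the definition of the energy norm. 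Lemma~\ref{lem:PerpNormEquiv} applied to $\V^\perp$ then upgrades this to $\enorm{\V^\perp}{\mesh} \lapprox \enorm{\v - \V}{\mesh}$, and a triangle inequality yields $\norm{\Lap(\v - \V^0)}{L^2(\Omega)} \lapprox \enorm{\v - \V}{\mesh}$. Combining with the preceding paragraph and noting that the oscillation terms match identically produces $E_\mesh^0(\v) \lapprox E_\mesh(\v)$, which completes the proof. The main obstacle is precisely the boundary-trace identification for $\V^\perp$, which hinges on the design of $\pi_\mesh^0$ to map into $\Xp \cap \Ho(\Omega)$ so that the weakly non-homogeneous Dirichlet data of $\V$ is entirely carried by $\V^\perp$; once this is in hand, everything else is a direct application of Lemma~\ref{lem:PerpNormEquiv} and the $H^2$-norm equivalence on $\Ho(\Omega)$.
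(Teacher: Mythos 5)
Your argument is correct and takes essentially the same route as the paper's: both use the decomposition $\V=\V^0+\V^\perp$ with $\V^0=\pi^0_\mesh\V$, the observation that the boundary traces of $\V^\perp$ agree with those of $\V-\v$ since $\V^0,\v\in\Ho(\Omega)$, and then Lemma~\ref{lem:PerpNormEquiv} together with the definition of $\enorm{\cdot}{\mesh}$ to absorb $\enorm{\V^\perp}{\mesh}$ into $\enorm{\v-\V}{\mesh}$. You are slightly more thorough in that you also spell out the easy inclusion $\bb{A}^s_0\subseteq\bb{A}^s$ (boundary terms drop for conforming differences), which the paper leaves implicit and only proves the nontrivial direction $\bb{A}^s\subseteq\bb{A}^s_0$.
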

\begin{proof}
Let $u\in\bb{A}_s$, for $s>0$, let $N>\#\mesh_0$, let ${P}_\ast\in\scr{P}_N$ and let $\V_\ast\in\X_\ast$ be such that
\begin{equation}
\rho_{\mesh_\ast}(\u,\V_\ast,f)=\inf_{\mesh\in\scr{P}_N}E_\mesh(\u)
\end{equation}
Using the triangle inquality $\enorm{\u-\V^0_\ast}{\mesh_\ast}\leq\enorm{\u-\V_\ast}{\mesh_\ast}+\enorm{\V_\ast-\V^0_\ast}{\mesh_\ast}$ with the fact that $\semi{\V_\ast}{\mesh_\ast}=\semi{u-\V_\ast}{\mesh_\ast}$ we have in view of norm equivalence \eqref{eq:lem:PerpNormEquiv}
\begin{equation}
\enorm{\V_\ast-\V^0_\ast}{\mesh_\ast}\leq\Cperp\semi{\V_\ast}{\mesh_\ast}\lapprox\enorm{u-\V_\ast}{\mesh_\ast},
\end{equation}
from which we obtain
\begin{equation}
\enorm{\u-\V^0_\ast}{\mesh_\ast}^2+\osc^2_{\mesh_\ast}(f)\lapprox\enorm{u-\V_\ast}{\mesh_\ast}^2+\osc^2_{\mesh_\ast}(f).
\end{equation}
Upon taking infimum we arrive at
\begin{equation}
\enorm{\u-\V^0_\ast}{\mesh_\ast}^2+\osc^2_{\mesh_\ast}(f)\lapprox E^2_P(\u,f)\lapprox N^{-2s}.
\end{equation}
\end{proof}

\begin{lemma}[Quasi-optimality of total error]\label{lem:QuasiOptimalityTotalError}
Let $\u$ be the solution of \eqref{eq:cwp} and for all $\mesh\in\scr{P}$ let $\U\in\Xp$ be the discrete solution to \eqref{eq:dnp}. Then, for a constant $C_\rm{QOTE}>0$ and $\gamma_Q>0$ we have for all $\gamma\ge\gamma_Q$
\begin{equation}
\rho^2_\mesh(\u,\U,f)\leq C_\rm{QOTE}\inf_{\V\in\Xp}\rho^2_\mesh(\u,\V,f).
\end{equation}
\end{lemma}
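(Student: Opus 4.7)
My plan is to exploit the orthogonal decomposition $\U=\U^0+\U^\perp$ to reduce the quasi-optimality question to a standard C\'ea estimate for a conforming sub-problem, then to control the inconsistent part $\U^\perp$ via the boundary-norm equivalence on $\Xp^\perp$, and finally to transfer the best-approximation bound from $\Xp^0$ to all of $\Xp$ using the same trick employed in Lemma~\ref{lem:ClassEquivalence}. First I would observe that the conforming part $\U^0$ satisfies a genuine Galerkin problem on $\Xp^0$: Lemma~\ref{lem:SolveConformPart} gives $\ap(\U^0,\V^0)=\ellf(\V^0)$ for every $\V^0\in\Xp^0$, and Lemma~\ref{lem:ac} together with $\Ho(\Omega)\subset\ker\Ep$ yields $\ap(\u,\V^0)=\ellf(\V^0)$, so $\ap(\u-\U^0,\V^0)=0$ for all $\V^0\in\Xp^0$. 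A standard C\'ea argument based on Lemmas~\ref{lem:ctynbf} and~\ref{lem:csvnbf} then delivers $\enorm{\u-\U^0}{\mesh}\le(\Ccont/\Ccoer)\inf_{\W^0\in\Xp^0}\enorm{\u-\W^0}{\mesh}$.

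Next, I would bound $\U^\perp$. Since $\U^0$ vanishes on $\Gamma$ together with its normal derivative, $\U^\perp$ has the same boundary traces as $\U$, and since $\u=\dfrac{\u}{\nu}=0$ on $\Gamma$, these are also the boundary traces of $\U-\u$. Lemma~\ref{lem:PerpNormEquiv} together with the definition \eqref{eq:meshnorm} of $\enorm{\cdot}{\mesh}$ then gives
\begin{equation*}
\enorm{\U^\perp}{\mesh}^2\le\Cperp^2\bigl(\norm{\u-\U}{3/2,\mesh}^2+\norm{\textstyle\dfrac{\u-\U}{\nu}}{1/2,\mesh}^2\bigr)\le\Cperp^2\min(\gamma_1,\gamma_2)^{-1}\enorm{\u-\U}{\mesh}^2.
\end{equation*}
Combined with the triangle inequality $\enorm{\u-\U}{\mesh}^2\le 2\enorm{\u-\U^0}{\mesh}^2+2\enorm{\U^\perp}{\mesh}^2$, choosing $\gamma_Q$ large enough that $2\Cperp^2\min(\gamma_1,\gamma_2)^{-1}\le 1/2$ lets me absorb the $\U^\perp$ term into the left-hand side and obtain $\enorm{\u-\U}{\mesh}^2\le 4\enorm{\u-\U^0}{\mesh}^2$.

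To finish, I would upgrade the infimum over $\Xp^0$ to one over $\Xp$ exactly as in the proof of Lemma~\ref{lem:ClassEquivalence}: for any $\V\in\Xp$, Lemma~\ref{lem:PerpNormEquiv} together with $\semi{\V^\perp}{\mesh}=\semi{\V}{\mesh}=\semi{\u-\V}{\mesh}$ (which uses that $\V^0$ and $\u$ both have zero boundary data) produces $\enorm{\V-\V^0}{\mesh}\lapprox\enorm{\u-\V}{\mesh}$, so $\enorm{\u-\V^0}{\mesh}\lapprox\enorm{\u-\V}{\mesh}$. Chaining this with the previous two steps gives $\enorm{\u-\U}{\mesh}^2\le C\inf_{\V\in\Xp}\enorm{\u-\V}{\mesh}^2$, and since $\osc_\mesh^2(f)$ is independent of the approximant, adding it to both sides yields the claimed bound. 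The main obstacle is the absorption step in the second paragraph, which is precisely what dictates the quantitative threshold $\gamma_Q$ on the stabilization parameter; no other step introduces restrictions beyond those already imposed by coercivity.
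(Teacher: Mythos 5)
Your proof is correct and takes a genuinely different route than the paper's. The paper applies coercivity and partial Galerkin orthogonality directly to the full error $e=\u-\U$, arriving at $\Ccoer\enorm{e}{\mesh}^2\leq\Ccont\enorm{e}{\mesh}\bigl(\enorm{\u-\V}{\mesh}+\enorm{\V^\perp}{\mesh}+\enorm{\U^\perp}{\mesh}\bigr)$, and then controls the awkward $\enorm{\U^\perp}{\mesh}$ term by routing through the a posteriori machinery: Lemma~\ref{lem:boundarycontrol} gives $\semi{\U^\perp}{\mesh}\lapprox\gamma^{-1/2}\etap$, and the efficiency estimate \eqref{eq:result:lem:EstimatorEfficiency} then converts $\etap$ to $\rho_\mesh(\u,\U,f)$, after which the total error is absorbed. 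You instead split the argument: a genuine C\'ea estimate for the conforming part $\U^0$ (from exact Galerkin orthogonality on $\Xp^0$), followed by a purely definitional bound on $\U^\perp$ — its boundary traces coincide with those of $\u-\U$, so Lemma~\ref{lem:PerpNormEquiv} plus the $\gamma$-weighting in \eqref{eq:meshnorm} gives $\enorm{\U^\perp}{\mesh}^2\lapprox\min(\gamma_1,\gamma_2)^{-1}\enorm{\u-\U}{\mesh}^2$ without ever invoking the estimator. This makes your proof more self-contained (no dependence on Lemmas~\ref{lem:boundarycontrol} and~\ref{lem:EstimatorEfficiency}) and slightly sharper, since you absorb against $\enorm{\u-\U}{\mesh}^2$ rather than the full $\rho_\mesh^2$; in exchange you need the extra infimum-transfer step from $\Xp^0$ to $\Xp$, which the paper sidesteps by working with an arbitrary $\V\in\Xp$ from the outset. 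Both arguments impose the same qualitative threshold $\gamma\ge\gamma_Q$ and yield the stated conclusion.
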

\begin{proof}
In view of Coercivity \eqref{eq:Coercivity}, partial Galerkin orthogonality \eqref{eq:PartialGalerkin} and Continuity \eqref{eq:Continuity}
\begin{equation}
\begin{split}
\Ccoer\enorm{\e}{P}^2&\leq\ap(\e,\u-\U)=\ap(\e,\u-\U^0)-\ap(\e,\U^\perp)\\
&=\ap(\e,\u-\V_0)+\ap(\e,\U^\perp)=\ap(\e,\u-\V)+\ap(\e,\V^\perp)+\ap(\e,\U^\perp)\\
&\leq\Ccont\enorm{\e}{\mesh}\left(\enorm{\u-\V}{\mesh}+\enorm{\V^\perp}{\mesh}+\enorm{\U^\perp}{\mesh}\right)
\end{split}
\end{equation}
Norm equivalence \eqref{lem:PerpNormEquiv} $\enorm{\V^\perp}{\mesh}\leq\Cperp\semi{\u-\V^\perp}{\mesh}\leq\enorm{\u-\V}{\mesh}$. Nonconforming control \eqref{eq:lem:boundarycontrol} and Global Lower Bound \eqref{eq:result:lem:EstimatorEfficiency}
makes $\enorm{\U^\perp}{\mesh}\lapprox\gamma^{-1/2}\eta_\mesh\leq\gamma^{-1/2}\Ceff\rho_\mesh(\u,\U,f)$.
From
\begin{equation}
\Ccoer\enorm{\e}{P}\lapprox\Ccont\left(\enorm{\u-\V}{\mesh}+\gamma^{-1/2}\Ceff\rho_\mesh(\u,\U,f)\right)
\end{equation}
we get
\begin{equation}
\enorm{\e}{\mesh}^2\lapprox\frac{\Ccont^2}{\Ccoer^2}\left(\enorm{\u-\V}{\mesh}^2+\gamma^{-1}\Ceff^2\rho^2_\mesh(\u,\U,f)\right)
\end{equation}
Add $\osc_\mesh^2(f)$ to the preceding expression to get
\begin{equation}
\left(1-\frac{\Ccont^2\Ceff^2}{\Ccoer^2}\gamma^{-1}\right)\rho_\mesh^2(\u,\U,f)\lapprox\frac{\Ccont^2\Ceff^2}{\Ccoer^2}\rho_\mesh^2(\u,\V,f).
\end{equation}
Let $\gamma_Q:=\frac{\Ccont^2\Ceff^2}{\Ccoer^2}$.
\end{proof}
Let 
\begin{equation}
\theta_\ast(\gamma):=\bigg(\frac{\Ceff-2\Cdrel\gamma^{-1}}{2(1+\Cdrel)}\bigg)^{1/2}
\quad\text{and}\quad\gamma_\ast(\theta):=\max\left(\frac{2\Cdrel}{\Ceff},\gamma_Q,\gamma_C(\theta)\right).
\end{equation}
Then $\theta_\ast>0$ and since $\Ceff<\Cdrel$, $\theta_\ast<1$.

\begin{lemma}[Optimal marking]\label{lem:OptimalMarking}
Let $\U=\ms{SOLVE}\,[\mesh,f]$, let $\mesh_\ast$ be any refinement of $\mesh$ and let $\U_\ast=\ms{SOLVE}\,[\mesh_\ast,\eff]$. If for some positive $\mu<1$
\begin{equation}\label{eq:lem:OptimalMarking:assumption}
\enorm{u-\U^0_\ast}{\mesh_\ast}^2+\rm{osc}_\ast^2(\eff,\mesh_\ast)\leq\mu\big(\enorm{u-\U}{}^2+\osc_\mesh^2(\eff,\mesh)\big),
\end{equation}
and $R_{P\to P_\ast}$ denotes collection of all elements in $P$ requiring refinement to obtain $P_\ast$ from $P$, then for $\theta\in(0,\theta_\ast(\gamma))$ we have 
\begin{equation}
\eta_\mesh (\U,\omega_{R_{P\to P_\ast}})\ge\theta\eta_\mesh (\U,\Omega)
\end{equation}
\end{lemma}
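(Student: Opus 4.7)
My plan is to follow the classical optimal-marking argument of Cascon--Kreuzer--Nochetto--Siebert \cite{cascon2008quasi}, adapted to the nonconforming Nitsche setting. Starting from the contraction hypothesis \eqref{eq:lem:OptimalMarking:assumption}, I would first rearrange it as
\begin{equation*}
(1-\mu)\bigl(\enorm{u-\U}{\mesh}^2+\osc_\mesh^2(\eff,\Omega)\bigr)
\leq \bigl(\enorm{u-\U}{\mesh}^2-\enorm{u-\U^0_\ast}{\mesh_\ast}^2\bigr)
+\bigl(\osc_\mesh^2(\eff,\Omega)-\osc^2_{\mesh_\ast}(\eff,\Omega)\bigr),
\end{equation*}
and then bound the right-hand side from above by a multiple of $\eta_\mesh^2(\U,\omega_{R_{\mesh\to\mesh_\ast}})$, up to a small $\gamma^{-1}$-correction, while bounding the left-hand side from below by $\Ceff\eta_\mesh^2(\U,\Omega)$ via global efficiency.

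The oscillation difference is the easy half: because $\eff-\Pip\eff$ and $\eff-\Pipp\eff$ agree on cells that are not refined and $\osc$ is monotone under refinement, $\osc_\mesh^2(\eff,\Omega)-\osc^2_{\mesh_\ast}(\eff,\Omega)\leq\osc_\mesh^2(\eff,R)$, which is in turn dominated by $\eta_\mesh^2(\U,\omega_{R_{\mesh\to\mesh_\ast}})$ through the estimator-dominance bound \eqref{eq:EstimatorDominance}.

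The energy-norm difference is the main obstacle, because the formulation is nonconforming. Since $u,\U^0_\ast\in\Ho(\Omega)$ but $\U=\U^0+\U^\perp$ with $\U^\perp\not\in\Ho$, the mesh norm $\enorm{u-\U}{\mesh}^2$ still carries the boundary contributions $\gamma_1\norm{\U^\perp}{3/2,\mesh}^2+\gamma_2\norm{\dfrac{\U^\perp}{\nu}}{1/2,\mesh}^2$ (using $u|_\Gamma=\dfrac{u}{\nu}|_\Gamma=0$), whereas $\enorm{u-\U^0_\ast}{\mesh_\ast}^2$ reduces to $\norm{\Lap(u-\U^0_\ast)}{L^2(\Omega)}^2$. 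Expanding $\a(u-\U,u-\U)$ around $u-\U^0_\ast$, writing $\U^0_\ast-\U=(\U^0_\ast-\U^0)-\U^\perp$, and invoking the $\a$-orthogonality $\a(u-\U^0_\ast,\U^0_\ast-\U^0)=0$ (valid because $\U^0_\ast-\U^0\in\Xpp^0\subset\Ho$ and $u$ solves \eqref{eq:cwp}), the cross term collapses to $-2\a(u-\U^0_\ast,\U^\perp)$. Absorbing this with Young's inequality and identifying the remaining terms with $\enorm{\U^0_\ast-\U}{\mesh}^2$ yields
\begin{equation*}
\enorm{u-\U}{\mesh}^2-\enorm{u-\U^0_\ast}{\mesh_\ast}^2\lapprox\enorm{\U^0_\ast-\U}{\mesh_\ast}^2+\enorm{\U^\perp}{\mesh}^2,
\end{equation*}
after which Corollary \ref{cor:aposteriori} controls the first right-hand term and the combination of Lemma \ref{lem:PerpNormEquiv} with Lemma \ref{lem:boundarycontrol} controls the second, both ultimately in terms of $\eta_\mesh^2(\U,\omega_{R_{\mesh\to\mesh_\ast}})$ modulo a $\gamma^{-1}\Ccoer^{-1}\eta_\mesh^2(\U,\Omega)$ correction.

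Applying global efficiency (Lemma \ref{lem:EstimatorEfficiency}) on the left-hand side and collecting, one arrives at an inequality of the form
\begin{equation*}
\bigl((1-\mu)\Ceff-c\gamma^{-1}\bigr)\eta_\mesh^2(\U,\Omega)\leq 2(1+\Cdrel)\eta_\mesh^2(\U,\omega_{R_{\mesh\to\mesh_\ast}}),
\end{equation*}
for a constant $c>0$ depending on $\Ccoer,\Ccont,\cshape$. Taking $\mu\leq \tfrac12$ so that $(1-\mu)\Ceff\geq\Ceff/2$, this is exactly $\eta_\mesh(\U,\omega_{R_{\mesh\to\mesh_\ast}})\geq\theta\eta_\mesh(\U,\Omega)$ for every $\theta<\theta_\ast(\gamma)=\bigl(\frac{\Ceff-2\Cdrel\gamma^{-1}}{2(1+\Cdrel)}\bigr)^{1/2}$. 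The chief technical difficulty is the nonconforming Pythagoras step above: the cross terms do not vanish and must be traded against a $\gamma^{-1}$ deficit, which is precisely why the lower bound $\gamma>2\Cdrel/\Ceff$ is built into the definition of $\gamma_\ast(\theta)$ -- it keeps the coefficient of $\eta_\mesh^2(\U,\Omega)$ on the left-hand side positive so that $\theta_\ast(\gamma)$ is well-defined.
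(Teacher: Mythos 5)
Your proposal follows the same CKNS template the paper uses and is essentially correct in structure, but you take a more elaborate route through the energy-norm difference than the paper does, and that route leaves a loose end. The paper rearranges the hypothesis with a factor of $2$, i.e.\ it bounds $\rho_\mesh^2-2\rho_{\mesh_\ast}^2 \ge (1-2\mu)\rho_\mesh^2 \ge (1-2\mu)\Ceff\eta_\mesh^2(\U,\Omega)$, and then dispatches the energy-norm difference with nothing more than the elementary inequality $\enorm{u-\U}{\mesh}^2-2\enorm{u-\U^0_\ast}{\mesh_\ast}^2\le 2\enorm{\U^0_\ast-\U}{\mesh}^2$ (the $\|a\|^2\le 2\|a-b\|^2+2\|b\|^2$ trick, valid here because $u$ and $\U^0_\ast$ vanish on $\Gamma$, so the boundary contributions of $\enorm{u-\U}{\mesh}$ and $\enorm{\U^0_\ast-\U}{\mesh}$ coincide). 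Discrete reliability (Corollary~\ref{cor:aposteriori}) and estimator dominance then land directly on $((1-2\mu)\Ceff-2\Cdrel\gamma^{-1})\eta_\mesh^2(\U,\Omega)\le 2(1+\Cdrel)\eta_\mesh^2(\U,\omega_{R_{\mesh\to\mesh_\ast}})$, which identifies $\theta^2$ exactly via the displayed choice of $\mu(\theta,\gamma)$.

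You instead use $\rho_\mesh^2-\rho_{\mesh_\ast}^2$ (factor $1$) and therefore need a genuine quasi-Pythagoras, invoking the $\a$-orthogonality $\a(u-\U^0_\ast,\U^0_\ast-\U^0)=0$ and Young's inequality on the surviving cross term $-2\a(u-\U^0_\ast,\U^\perp)$. The orthogonality is indeed valid (Lemma~\ref{lem:SolveConformPart} plus nesting), but Young's inequality introduces a term $\delta\,\enorm{u-\U^0_\ast}{\mesh_\ast}^2$ that does not simply cancel: you must reabsorb it into the left-hand side through the hypothesis $\enorm{u-\U^0_\ast}{\mesh_\ast}^2\le\mu\rho_\mesh^2$, which perturbs the prefactor $(1-\mu)$ and hence the final threshold. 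Your sketch glosses over this step. Moreover, after you split off $\enorm{\U^\perp}{\mesh}^2$ you re-bound it via Lemmas~\ref{lem:PerpNormEquiv} and~\ref{lem:boundarycontrol}; the paper's simpler route already carries $\enorm{\U^\perp}{\mesh}^2$ silently inside $\enorm{\U^0_\ast-\U}{\mesh}^2$ and lets discrete reliability absorb it in one stroke. So your argument does close, but at the cost of an extra absorption you did not spell out, and the constants it produces would need a round of bookkeeping to be matched to the paper's explicit $\theta_\ast(\gamma)=\bigl(\frac{\Ceff-2\Cdrel\gamma^{-1}}{2(1+\Cdrel)}\bigr)^{1/2}$, whereas the paper's $(1-2\mu)$ normalization makes $\theta^2$ drop out exactly. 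Switching to the paper's triangle-inequality step removes all of these issues and is the recommended simplification.
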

\begin{proof}
Let $\theta<\theta_\ast$, the parameter $\theta_\ast$ to be specified later, such that the linear contraction of the total error holds for 
\begin{equation}
\mu(\theta,\gamma):=\frac{1}{2}\left(1-\frac{2\Cdrel\gamma^{-1}}{\Ceff}\right)\left(1-\frac{\theta^2}{\theta^2_\ast}\right)<\frac{1}{2},\quad(\gamma\ge\gamma_\ast).
\end{equation}
The efficiency estimate \eqref{eq:result:lem:EstimatorEfficiency} together with the assumption \eqref{eq:lem:OptimalMarking:assumption}
\begin{equation}
\begin{split}
(1-2\mu)\Ceff\eta_\mesh^2(\U,\mesh)&\leq(1-\mu)\rho_\mesh^2(\u,\U,f)\\
&=\rho_\mesh^2(\u,\U,f)-\rho_\ast^2(\u_\ast,\U^0_\ast,f)\\
&=\enorm{\u-\U}{\mesh}^2-2\enorm{\u-\U^0_\ast}{\mesh_\ast}^2+\osc_P^2(f,\Omega)-2\osc_{\mesh_\ast}^2(f,\Omega)
\end{split}
\end{equation}
Triangle inequality and Discrete Reliability \eqref{eq:result:lem:DiscreteEstimatorReliability}
\begin{equation}
\begin{split}
\enorm{\u-\U}{\mesh}^2-2\enorm{\u-\U^0_\ast}{\mesh_\ast}^2&\leq2\enorm{\U^0_\ast-\U}{\mesh}^2\\
&\leq 2C_\rm{dRel}\left(\eta^2_\mesh(\U,\omega_{R_{\mesh\to\mesh_\ast}})
+\gamma^{-1}\eta_\mesh^2(\U,\Omega)\right)
\end{split}
\end{equation}
Estimator Dominance over oscillation
\begin{equation}
\begin{split}
\osc_P^2(f,\Omega)-2\osc_{\mesh_\ast}^2(f,\Omega)&\leq2\osc^2_\mesh(f,\omega_{R_{\mesh\to\mesh_\ast}})\leq2\eta_\mesh^2(\U,\omega_{R_{\mesh\to\mesh_\ast}})
\end{split}
\end{equation}
From
\begin{equation}
(1-2\mu)\Ceff\eta_\mesh^2(\U,\mesh)\leq2(1+\Cdrel)\eta_\mesh^2(\U,\omega_{R_{\mesh\to\mesh_\ast}})+2\Cdrel\gamma^{-1}\eta_\mesh^2(\U,\Omega)
\end{equation}
re-write into
\begin{equation}
\left((1-2\mu)\Ceff+2\Cdrel\gamma^{-1}\right)\eta_\mesh^2(\U,\mesh)\leq2(1+\Cdrel)\eta_\mesh^2(\U,\omega_{R_{\mesh\to\mesh_\ast}}).
\end{equation}
For reader clarity we show that
\[
\frac{(1-2\mu)\Ceff-2\Cdrel\gamma^{-1}}{2(1+\Cdrel)}=\theta^2.
\]
Express
\[
(1-2\mu)\Ceff-2\Cdrel\gamma^{-1}=\theta^22(1+\Cdrel)=\frac{\theta^2(\Ceff-2\Cdrel\gamma^{-1})}{\theta^2_\ast},
\]
which is same as
\[
\begin{split}
-2\mu=\frac{\theta^2}{\theta_\ast^2}\left(1-\frac{2\Cdrel\gamma^{-1}}{\Ceff}\right)+\frac{2\Cdrel\gamma^{-1}}{\Ceff}-1
=\left(1-\frac{2\Cdrel\gamma^{-1}}{\Ceff}\right)\left(\frac{\theta^2}{\theta^2_\ast}-1\right).
\end{split}
\]
\end{proof}
\begin{lemma}[Cardinality of Marked Cells]\label{lem:MarkedComplexity}
Let $\{(\mesh_\ell,\bb{X}_\ell,\U_\ell)\}_{\ell\ge0}$ be sequence generated by $\ms{AFEM}\,(\mesh_0,\eff;\eps,\theta)$ for admissible $P_0$ and the pair $\u\in\bb{A}^s$ for some $s>0$ then 
\begin{equation}\label{eq:lem:MarkedComplexity}
\#\scr{M}_\ell\lapprox\left(1-\frac{\theta^2}{\theta_\ast^2}\right)^{-\frac{1}{2s}}\semi{u}{\bb{A}_s}^{-\frac{1}{s}}\rho_\ell(\u,\U_\ell,f)^{-\frac{1}{s}}
\end{equation}
\end{lemma}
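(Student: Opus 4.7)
The plan is a standard quasi-optimality argument following Cascon--Kreuzer--Nochetto--Siebert, with adjustments to handle non-conformity via Lemma~\ref{lem:ClassEquivalence} and Lemma~\ref{lem:boundarycontrol}. The main idea is to compare the marked set $\scr{M}_\ell$ with an idealized refinement $R_{\mesh_\ell\to\mesh_+}$ produced by overlaying $\mesh_\ell$ with a quasi-optimal partition $\mesh_\eps$ delivered by the approximation class $\bb{A}^s$.

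First, since $\u\in\bb{A}^s=\bb{A}^s_0$ by Lemma~\ref{lem:ClassEquivalence}, for every tolerance $\eps>0$ we can select a partition $\mesh_\eps\in\scr{P}$ with $\#\mesh_\eps-\#\mesh_0\lapprox\eps^{-1/s}\semi{\u}{\bb{A}^s}^{1/s}$ together with some $\V_\eps\in\bb{X}_{\mesh_\eps}^0$ satisfying $\norm{\u-\V_\eps}{H^2(\Omega)}^2+\osc_{\mesh_\eps}^2(f)\leq\eps^2$. Let $\mesh_+=\mesh_\ell\oplus\mesh_\eps$ be the overlay; by \eqref{eq:MeshOverelay} it inherits the cardinality control
\begin{equation}
\#\mesh_+-\#\mesh_\ell\leq\#\mesh_\eps-\#\mesh_0\lapprox\eps^{-1/s}\semi{\u}{\bb{A}^s}^{1/s}.
\end{equation}

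Next I would verify the contraction hypothesis \eqref{eq:lem:OptimalMarking:assumption} for $\mesh_+$. Since $\V_\eps\in\bb{X}_{\mesh_+}^0\subset\bb{X}_{\mesh_+}$ and oscillation is monotone, Lemma~\ref{lem:QuasiOptimalityTotalError} gives $\rho_{\mesh_+}^2(\u,\U_+,f)\leq C_\rm{QOTE}\eps^2$ for $\gamma\ge\gamma_Q$. To reach the $\U_+^0$-bound required by Lemma~\ref{lem:OptimalMarking}, split $\U_+=\U_+^0+\U_+^\perp$, use the triangle inequality and Lemma~\ref{lem:boundarycontrol} (combined with the efficiency bound \eqref{eq:result:lem:EstimatorEfficiency}) to obtain $\enorm{\U_+^\perp}{\mesh_+}^2\lapprox\gamma^{-1}\rho_{\mesh_+}^2(\u,\U_+,f)\lapprox\gamma^{-1}\eps^2$. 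Therefore
\begin{equation}
\enorm{\u-\U_+^0}{\mesh_+}^2+\osc_{\mesh_+}^2(f)\lapprox(1+\gamma^{-1})\eps^2.
\end{equation}
Choosing $\eps^2$ proportional to $\mu(\theta,\gamma)\rho_\ell^2(\u,\U_\ell,f)$ makes the right-hand side at most $\mu(\theta,\gamma)\rho_\ell^2(\u,\U_\ell,f)$, so the hypothesis of Lemma~\ref{lem:OptimalMarking} holds on $\mesh_+$.

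Lemma~\ref{lem:OptimalMarking} then asserts that the set $R_{\mesh_\ell\to\mesh_+}$ of cells refined to pass from $\mesh_\ell$ to $\mesh_+$ satisfies D\"orfler marking with parameter $\theta$ for $\eta_{\mesh_\ell}(\U_\ell,\cdot)$. Since the module $\ms{MARK}$ produces a minimal D\"orfler-compliant set, $\#\scr{M}_\ell\leq\#R_{\mesh_\ell\to\mesh_+}\leq\#\mesh_+-\#\mesh_\ell$. Combining with the cardinality bound and substituting the chosen $\eps$ yields
\begin{equation}
\#\scr{M}_\ell\lapprox\eps^{-1/s}\semi{\u}{\bb{A}^s}^{1/s}
\lapprox\mu(\theta,\gamma)^{-1/(2s)}\semi{\u}{\bb{A}^s}^{1/s}\rho_\ell(\u,\U_\ell,f)^{-1/s},
\end{equation}
which gives \eqref{eq:lem:MarkedComplexity} after recalling that $\mu(\theta,\gamma)\simeq 1-\theta^2/\theta_\ast(\gamma)^2$. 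The main obstacle is the middle step: the overlay mesh carries a \emph{non-conforming} discrete solution $\U_+$, so the quasi-optimal total-error bound must be converted into a bound on $\enorm{\u-\U_+^0}{\mesh_+}$ before Lemma~\ref{lem:OptimalMarking} becomes applicable, which is where the nonconformity-control estimate \eqref{eq:lem:boundarycontrol} and the class equivalence of Lemma~\ref{lem:ClassEquivalence} are essential.
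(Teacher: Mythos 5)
Your proof is correct and follows the same overall Cascon--Kreuzer--Nochetto--Siebert skeleton as the paper: choose $\eps$ proportional to $\mu^{1/2}\rho_\ell$, produce a quasi-optimal conforming mesh $\mesh_\eps$ via Lemma~\ref{lem:ClassEquivalence}, form the overlay $\mesh_+=\mesh_\ell\oplus\mesh_\eps$, verify the contraction hypothesis of Lemma~\ref{lem:OptimalMarking}, and conclude by minimality of the D\"orfler set together with the overlay bound~\eqref{eq:MeshOverelay}.

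The one place where you take a genuinely different route is the verification of hypothesis~\eqref{eq:lem:OptimalMarking:assumption}. The paper appeals to Lemma~\ref{lem:SolveConformPart} to recognize $\U_\ast^0$ as the solution of a conforming Galerkin problem and then invokes Lemma~\ref{lem:QuasiOptimalityTotalError} ``on $\U^0_\ast$'' directly, yielding $\rho_\ast^2(\u,\U^0_\ast,f)\leq C_\rm{QOTE}\,\eps^2=\mu\rho_\ell^2$ cleanly. You instead apply Lemma~\ref{lem:QuasiOptimalityTotalError} to the genuine Nitsche solution $\U_+$ — which is what that lemma literally bounds — and then convert to a bound on $\enorm{\u-\U_+^0}{\mesh_+}$ via the splitting $\U_+=\U_+^0+\U_+^\perp$, the norm equivalence on $\Xp^\perp$, the boundary-control estimate~\eqref{eq:lem:boundarycontrol}, and efficiency~\eqref{eq:result:lem:EstimatorEfficiency}. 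Your route is a little longer and produces an extra $(1+\gamma^{-1})$ factor that must be absorbed into the proportionality constant when fixing $\eps$, but it has the virtue of only using Lemma~\ref{lem:QuasiOptimalityTotalError} in the form in which it is actually stated; the paper's shortcut implicitly relies on C\'ea's lemma for the conforming subproblem, which is a separate (if routine) observation. Either way the final estimate $\#\scr{M}_\ell\lapprox\mu(\theta,\gamma)^{-1/(2s)}\semi{\u}{\bb{A}^s}^{1/s}\rho_\ell^{-1/s}$ and the identification $\mu\simeq1-\theta^2/\theta_\ast^2$ are obtained identically.
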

\begin{proof}
Let $(\u,f)\in\bb{A}_s$ and set $\eps^2=\mu C_\rm{QOTE}^{-1}\rho_\ell^2(\u,\U_\ell,f)$.
In view of Lemma \ref{lem:ClassEquivalence}, $\u\in\bb{A}_s^0$ and there exists an admissible partition $\mesh_\eps$ and $\V^0_\eps\in\X^0_\eps$ with $\rho_\eps^2(\u,\V^0_\eps,f)\leq\eps^2$ and $\#\mesh_\eps\lapprox\semi{\u}{\bb{A}^s}^{1/s}\eps^{-1/s}$.
Let $\mesh_\ast$ be the overlay of meshes $\mesh_\ell$ and $\mesh_\eps$.
From \eqref{eq:lem:SolveConformPart} 
\begin{equation}
\a_{\mesh_\ast}(\U^0_\ast,\W^0)=\ellf(\W^0)\quad\forall\W^0\in\X_\ast^0,
\end{equation}
we invoke Lemma \ref{lem:QuasiOptimalityTotalError} on $\U^0_\ast$ and use the fact $\mesh_\ast\ge\mesh_\eps$ makes $\X_\ast\supseteq\X_\eps$ and obtain
\begin{equation}
\rho^2_\ast(\u,\U^0_\ast,f)\leq C_\rm{QOTE}\rho^2_\eps(\u,\V^0_\eps,f)\leq\eps^2=\mu\rho^2_\ell(\u,\U_\ell,f)
\end{equation}
We may now invoke Lemma \ref{lem:OptimalMarking} and $R_{\mesh_\ell\to\mesh_\ast}$ satisfies Dorfler property 
Minimal cardinality of marked cells
\begin{equation}
\#\scr{M}_\ell\leq\#R_{\mesh_\ell\to\mesh_\ast}\leq\#\mesh_\ast-\#\mesh_\ell
\end{equation}
In view of mesh overlay property $\#\mesh_\ast\leq\mesh_\eps+\#\mesh_\ell-\#\mesh_0$ in \eqref{eq:MeshOverelay} and definition of $\eps$ we arrive at
\begin{equation}
\#\scr{M}_\ell\leq\#\mesh_\eps-\#\mesh_0\lapprox\mu^{-1/2s}\semi{u}{\bb{A}^s}^{1/s}\rho_\ell(\u,\U_\ell,f)^{-1/s}
\end{equation}
\end{proof}
\begin{theorem}[Quasi-optimality]
Let $\gamma_\ast$ and $\theta_\ast$ be as above. If $\gamma>\gamma_\ast$ and $\theta\in(0,\theta_\ast(\gamma))$, $\u\in\bb{A}^s$ and $P_0$ is admissible, then the call $\mathbf{AFEM}\,[\mesh_0,\eff,\eps,\theta]$ generates a sequence $\{(\mesh_\ell,\bb{X}_\ell,\U_\ell)\}_{\ell\ge0}$ of strictly admissible partitions $\mesh_\ell$, conforming finite-element spline spaces $\bb{X}_\ell$ and discrete solutions $\U_\ell$ satisfying
\begin{equation}
\rho_\ell(\u,\U_\ell,\eff)
\lapprox\Phi(s,\theta)\semi{(\u,\eff)}{\bb{A}_s}^{}(\#\mesh-\#\mesh_0)^{-s}
\end{equation}
with $\Phi(s,\theta)=(1-{\theta^2}/{\theta_\ast^2})^{-\frac{1}{2}}$
\end{theorem}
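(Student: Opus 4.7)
The plan is to combine three ingredients already in hand: the contractive estimate of Theorem \ref{thm:nc}, which provides geometric decay of the quasi-error $\ap+\Cest\etap^2$; the cardinality bound of Lemma \ref{lem:MarkedComplexity} on each marked set $\scr{M}_\ell$; and the mesh-overlay complexity bound \eqref{eq:MarkingComplexity} for \textbf{REFINE}. The overall strategy follows the classical quasi-optimality argument of \cite{cascon2008quasi}, adapted to the inconsistent Nitsche setting.

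First, I would establish that the total error $\rho_\ell:=\rho_{\mesh_\ell}(\u,\U_\ell,\eff)$ and the quasi-error $\ap+\Cest\etap^2$ appearing in Theorem \ref{thm:nc} are equivalent up to constants uniform in $\ell$. The bound $\rho_\ell^2\lapprox\ap+\Cest\etap^2$ follows from coercivity of $\ap$ applied to $\e=\u-\U_\ell$, combined with the oscillation dominance \eqref{eq:EstimatorDominance}. The reverse bound $\ap+\Cest\etap^2\lapprox\rho_\ell^2$ combines reliability (Corollary \ref{cor:aposteriori}) with efficiency (Lemma \ref{lem:EstimatorEfficiency}). Under this equivalence, iterating Theorem \ref{thm:nc} yields $\rho_k^2\lapprox\alpha^{k-\ell}\rho_\ell^2$ for all $\ell\le k$, equivalently
\begin{equation}
\rho_\ell^{-1/s}\lapprox\alpha^{(k-\ell)/(2s)}\rho_k^{-1/s}.
\end{equation}

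Next I would invoke Lemma \ref{lem:MarkedComplexity} at each iteration to obtain
\begin{equation}
\#\scr{M}_\ell\lapprox\bigl(1-\tfrac{\theta^2}{\theta_\ast^2}\bigr)^{-\tfrac{1}{2s}}\semi{(\u,\eff)}{\bb{A}_s}^{1/s}\rho_\ell^{-1/s}.
\end{equation}
Substituting the contraction bound and summing over $\ell=0,\ldots,k-1$, the resulting geometric series $\sum_{\ell=0}^{k-1}\alpha^{(k-\ell)/(2s)}$ is bounded by a constant depending only on $\alpha$ and $s$. Combined with \eqref{eq:MarkingComplexity}, this gives
\begin{equation}
\#\mesh_k-\#\mesh_0\le\Lambda\sum_{\ell=0}^{k-1}\#\scr{M}_\ell\lapprox\Phi(s,\theta)^{1/s}\semi{(\u,\eff)}{\bb{A}_s}^{1/s}\rho_k^{-1/s},
\end{equation}
and raising both sides to the power $-s$ and rearranging yields the claimed rate.

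The main obstacle is the uniform equivalence $\rho_\ell^2\sim\ap+\Cest\etap^2$, which in this inconsistent Nitsche setting is more delicate than in the conforming case: the boundary contributions to $\enorm{\e}{\mesh_\ell}^2$ must be dominated by $\etap^2$ via the nonconforming boundary control of Lemma \ref{lem:boundarycontrol}, and this is precisely where the hypothesis $\gamma\ge\gamma_\ast$ enters. Once this equivalence is secured with constants uniform in $\ell$, the geometric summation and inversion proceed as in the standard AFEM argument.
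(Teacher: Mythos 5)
Your proposal is correct and follows essentially the same strategy as the paper's (abbreviated) proof: establish equivalence of the total error $\rho_\ell$ with the contractive quantity $\ap(\e,\e)+\Cest\etap^2$ via coercivity, reliability, efficiency and estimator dominance; iterate the contraction of Theorem~\ref{thm:nc}; feed the result into the cardinality bound of Lemma~\ref{lem:MarkedComplexity}; sum the geometric series and close with the complexity bound \eqref{eq:MarkingComplexity}. If anything you spell out the equivalence step and the correct exponent $\alpha^{(k-\ell)/(2s)}$ more carefully than the paper's sketch does.
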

\begin{proof}
The proof is similar to that of the confomring forumlation \cite{}. For completeness we outline the analysis.
Let $\theta<\theta_\ast$ be given and assume that $u\in\bb{A}^s(\rho)$. 
We will show that the adaptive procedure $\ms{AFEM}$ will produce a sequence $\{(\mesh_\ell,\X_\ell,\U_\ell)\}_{\ell\ge0}$ such that $\rho_\ell\lapprox(\#\mesh_\ell-\#\mesh_0)^{-s}$.
In view of Convergence Theorem \ref{thm:nc}, we have for a factor $\Cest>0$ and a contractive factor $\alpha\in(0,1)$, Efficiency Estimate \eqref{eq:result:lem:EstimatorEfficiency} and Estimator Dominance \eqref{eq:EstimatorDominance}
\begin{equation}
\sum_{j=0}^{\ell-1}\rho_j^{-\frac{1}{s}}\leq\sum_{j=0}^{\ell-1}\alpha^{\frac{\ell-j}{s}}\textstyle\left(1+\frac{\Cest}{\Ceff}\right)^{\frac{1}{2s}}\left(e_\ell^2+\Cest\osc_\ell^2\right)^{-\frac{1}{2s}}.
\end{equation}
Cardinality of Marked Cells \eqref{eq:lem:MarkedComplexity} and \eqref{eq:MarkingComplexity} yields
\begin{equation}
\#\mesh_\ell-\#\mesh_0\lapprox
\semi{u}{\bb{A}^s}^{-{1}/{s}}
\left(1+\frac{\Cest}{\Ceff}\right)^{{1}/{2s}}
\frac{\alpha^{1/s}}{1-\alpha^{1/s}}
\left(1-\frac{\theta^2}{\theta_\ast^2}\right)^{-{1}/{2s}}\rho_\ell(\u,\U_\ell,f)^{-\frac{1}{s}}
\end{equation}
From Remark \ref{rem:ConvergenceAlpha}
\begin{equation}
\frac{\alpha^{1/s}}{1-\alpha^{1/s}}\leq
\end{equation}
\end{proof}
\section{Acknowledgements}
We thank Emmanuil Georgoulis for discussion about dG methods and his invaluable advice.
\bibliography{publications}

\begin{thebibliography}{10}

\bibitem{adams1975sobolev}
{\sc R.~A. Adams}, {\em Sobolev spaces. 1975}, Academic Press, New York, 1975.

\bibitem{ainsworth2011posteriori}
{\sc M.~Ainsworth and J.~T. Oden}, {\em A posteriori error estimation in finite
  element analysis}, vol.~37, John Wiley \& Sons, 2011.

\bibitem{al2018adaptivity}
{\sc I.~Al~Balushi, W.~Jiang, G.~Tsogtgerel, and T.-Y. Kim}, {\em Adaptivity of
  a b-spline based finite-element method for modeling wind-driven ocean
  circulation}, Computer Methods in Applied Mechanics and Engineering, 332
  (2018), pp.~1--24.

\bibitem{babuvska1978posteriori}
{\sc I.~Babu{\v{s}}ka and W.~C. Rheinboldt}, {\em A-posteriori error estimates
  for the finite element method}, International Journal for Numerical Methods
  in Engineering, 12 (1978), pp.~1597--1615.

\bibitem{babuvvska1978error}
{\sc I.~Babuv{\v{s}}ka and W.~C. Rheinboldt}, {\em Error estimates for adaptive
  finite element computations}, SIAM Journal on Numerical Analysis, 15 (1978),
  pp.~736--754.

\bibitem{bazilevs2006isogeometric}
{\sc Y.~Bazilevs, L.~Beirao~da Veiga, J.~A. Cottrell, T.~J. Hughes, and
  G.~Sangalli}, {\em Isogeometric analysis: approximation, stability and error
  estimates for h-refined meshes}, Mathematical Models and Methods in Applied
  Sciences, 16 (2006), pp.~1031--1090.

\bibitem{bazilevs2007weak}
{\sc Y.~Bazilevs and T.~J. Hughes}, {\em Weak imposition of dirichlet boundary
  conditions in fluid mechanics}, Computers \& Fluids, 36 (2007), pp.~12--26.

\bibitem{binev2004adaptive}
{\sc P.~Binev, W.~Dahmen, and R.~DeVore}, {\em Adaptive finite element methods
  with convergence rates}, Numerische Mathematik, 97 (2004), pp.~219--268.

\bibitem{bonito2010quasi}
{\sc A.~Bonito and R.~H. Nochetto}, {\em Quasi-optimal convergence rate of an
  adaptive discontinuous galerkin method}, SIAM Journal on Numerical Analysis,
  48 (2010), pp.~734--771.

\bibitem{bubuvska1984feedback}
{\sc I.~Bubu{\v{s}}ka and M.~Vogelius}, {\em Feedback and adaptive finite
  element solution of one-dimensional boundary value problems}, Numerische
  Mathematik, 44 (1984), pp.~75--102.

\bibitem{cascon2008quasi}
{\sc J.~M. Cascon, C.~Kreuzer, R.~H. Nochetto, and K.~G. Siebert}, {\em
  Quasi-optimal convergence rate for an adaptive finite element method}, SIAM
  Journal on Numerical Analysis, 46 (2008), pp.~2524--2550.

\bibitem{dorfler1996convergent}
{\sc W.~D{\"o}rfler}, {\em A convergent adaptive algorithm for poisson's
  equation}, SIAM Journal on Numerical Analysis, 33 (1996), pp.~1106--1124.

\bibitem{embar2010imposing}
{\sc A.~Embar, J.~Dolbow, and I.~Harari}, {\em Imposing dirichlet boundary
  conditions with nitsche's method and spline-based finite elements},
  International journal for numerical methods in engineering, 83 (2010),
  pp.~877--898.

\bibitem{buffa2016adaptive}
{\sc A. Buffa and C. Giannelli},
{\em Adaptive isogeometric methods with hierarchical splines: error estimator and convergence}, {Mathematical Models and Methods in Applied Sciences}, {volume 26}, {number 01}, {pages 1--25}, {2016}, {World Scientific}

\bibitem{buffa2016complexity}
{\sc A. Buffa, C. Giannelli, P. Morgenstern and D. Peterseim}
{\em Complexity of hierarchical refinement for a class of admissible mesh configurations}, {Computer Aided Geometric Design}, {volume 47}, {pages 83--92}, {year 2016}, {Elsevier}.


\bibitem{feischl2014adaptive}
{\sc M.~Feischl, T.~F�hrer, and D.~Praetorius}, {\em Adaptive fem with optimal
  convergence rates for a certain class of nonsymmetric and possibly nonlinear
  problems}, SIAM Journal on Numerical Analysis, 52 (2014), pp.~601--625.

\bibitem{grisvard2011elliptic}
{\sc P.~Grisvard}, {\em Elliptic problems in nonsmooth domains}, vol.~69, SIAM,
  2011.

\bibitem{hughes2005isogeometric}
{\sc T.~J. Hughes, J.~A. Cottrell, and Y.~Bazilevs}, {\em Isogeometric
  analysis: Cad, finite elements, nurbs, exact geometry and mesh refinement},
  Computer methods in applied mechanics and engineering, 194 (2005),
  pp.~4135--4195.

\bibitem{juntunen2009nitsche}
{\sc M.~Juntunen and R.~Stenberg}, {\em Nitsche's method for general boundary
  conditions}, Mathematics of computation, 78 (2009), pp.~1353--1374.

\bibitem{kim2015b}
{\sc T.-Y. Kim, T.~Iliescu, and E.~Fried}, {\em B-spline based finite-element
  method for the stationary quasi-geostrophic equations of the ocean}, Computer
  Methods in Applied Mechanics and Engineering, 286 (2015), pp.~168--191.

\bibitem{morin2000data}
{\sc P.~Morin, R.~H. Nochetto, and K.~G. Siebert}, {\em Data oscillation and
  convergence of adaptive fem}, SIAM Journal on Numerical Analysis, 38 (2000),
  pp.~466--488.

\bibitem{morin2002convergence}
\leavevmode\vrule height 2pt depth -1.6pt width 23pt, {\em Convergence of
  adaptive finite element methods}, SIAM review, 44 (2002), pp.~631--658.

\bibitem{morin2008basic}
{\sc P.~Morin, K.~G. Siebert, and A.~Veeser}, {\em A basic convergence result
  for conforming adaptive finite elements}, Mathematical Models and Methods in
  Applied Sciences, 18 (2008), pp.~707--737.

\bibitem{nitsche1971variation}
{\sc J.~Nitsche}, {\em {\"U}ber ein variationsprinzip zur l{\"o}sung von
  dirichlet-problemen bei verwendung von teilr{\"a}umen, die keinen
  randbedingungen unterworfen sind}, 36 (1971), pp.~9--15.

\bibitem{scott1990finite}
{\sc L.~R. Scott and S.~Zhang}, {\em Finite element interpolation of nonsmooth
  functions satisfying boundary conditions}, Mathematics of Computation, 54
  (1990), pp.~483--493.

\bibitem{siebert2010converg}
{\sc K.~G. Siebert}, {\em A convergence proof for adaptive finite elements
  without lower bound}, IMA journal of numerical analysis, 31 (2010),
  pp.~947--970.

\bibitem{speleers2016effortless}
{\em Effortless quasi-interpolation in hierarchical spaces}, {\sc Speleers, Hendrik and Manni, Carla}, {Numerische Mathematik}, {volume 132}, {number 1}, {pages 155--184}, {2016}, {Springer}

\bibitem{speleers2017hierarchical}
{\sc H. Speleers}, {\em Hierarchical spline spaces: quasi-interpolants and local approximation estimates}, {Advances in Computational Mathematics}, {volume 43}, {number 2}, {pages 235--255}, {2017}, {Springer}

\bibitem{stenberg1995some}
{\sc R.~Stenberg}, {\em On some techniques for approximating boundary
  conditions in the finite element method}, Journal of Computational and
  applied Mathematics, 63 (1995), pp.~139--148.

\bibitem{stevenson2005optimal}
{\sc R.~Stevenson}, {\em An optimal adaptive finite element method}, SIAM
  journal on numerical analysis, 42 (2005), pp.~2188--2217.

\bibitem{verfurth1994posteriori}
{\sc R.~Verf{\"u}rth}, {\em A posteriori error estimation and adaptive
  mesh-refinement techniques}, Journal of Computational and Applied
  Mathematics, 50 (1994), pp.~67--83.

\bibitem{vuong2011hierarchical}
{\sc A.-V. Vuong, C.~Giannelli, B.~J{\"u}ttler, and B.~Simeon}, {\em A
  hierarchical approach to adaptive local refinement in isogeometric analysis},
  Computer Methods in Applied Mechanics and Engineering, 200 (2011),
  pp.~3554--3567.

\end{thebibliography}
\bibliographystyle{siam}

\end{document}